\newtheorem{theorem}{Theorem}[section]
\newtheorem{proposition}[theorem]{Proposition}
\newtheorem{lemma}[theorem]{Lemma}
\newtheorem{corollary}[theorem]{Corollary}
\newtheorem{remark}{Remark}[section]
\theoremstyle{definition}
\newtheorem{definition}[theorem]{Definition}
\newtheorem{notation}[theorem]{Notation}
\newcommand{\R}{\mathbb{R}}
\newcommand{\C}{\mathbb{C}}
\newcommand{\N}{\mathbb{N}}
\newcommand{\Z}{\mathbb{Z}}
\newcommand{\T}{\mathbb{T}}
\newcommand{\E}{\mathbb E}
\newcommand{\an}[1]{\langle #1 \rangle}
\begin{document}

\title{General remarks on the propagation of chaos in wave turbulence and application to the incompressible Euler dynamics}
\author{Anne-Sophie de Suzzoni\footnote{CMLS, \'Ecole Polytechnique, CNRS, Universit\'e Paris-Saclay, 91128 PALAISEAU Cedex, France, \texttt{anne-sophie.de-suzzoni@polytechnique.edu}}}

\maketitle

\begin{abstract}
In this paper, we prove propagation of chaos in the context of wave turbulence for a generic quasisolution. We then apply the result to full solutions to the incompressible Euler equation.
\end{abstract}

\section{Introduction}

We address the question of propagation of chaos in the context of \emph{wave} turbulence.

The issue at stake is the following : we consider the solution to a Hamiltonian equation with a random initial datum whose Fourier coefficients are initially independent and we want to know if this independence remains satisfied at later times. These Fourier modes must satisfy what is called in the Physics literature \emph{Random Phase Approximation}, which is something satisfied by Gaussian variables. Here, we address also the following question : assuming that the initial Fourier modes are Gaussian, do the Fourier modes at later times conserve some sort of Gaussianity.

In the context of weak turbulence and for Schrödinger equations, these questions have been successfully adressed by Deng and Hani in \cite{denghani21}. The Gaussianity in these papers consists in proving that at later times the moments of the Fourier modes still behave like Gaussian moments. 

Of course, the independance and Gaussianity are asymptotic in some sense. In the work by Deng and Hani, the cubic Schrödinger equation is considered on a torus of size $L\gg 1$ and with an initial datum of size $\varepsilon(L) \ll 1$ but at very big times in terms of $\varepsilon$, passed the deterministic nonlinear time, at the so-called kinetic time, where nonlinear effects start appearing in the dynamics of the statistics. They prove that the correlations between different Fourier modes tend to $0$ as $L \rightarrow \infty$ and that if the initial datum is a Gaussian field, then the Fourier conserve Gaussian moments. They deduce this result from their successful derivation of the so-called \emph{kinetic} equation, see \cite{DH21}. 

Here, we do not adress the issue of the derivation of the kinetic equation. However, we mention the pioneer work by Peierls, \cite{peierls}, the following works by Brout and Prigogine or Prigogine alone, \cite{Brout-Prigo, Prigogine}, and the works on fluid mechanics by Hasselman \cite{Hass1,Hass2}, Zakharov and Filonenko or Zakharov alone, \cite{Zakharov1967,ZakFil66,KZspectra}. For a review, we mention the book by Nazarenko, \cite{Naz}. Mathematical works on the derivation of kinetic equations for the Schrödinger equations include \cite{CoGe20,denghani19,CoG19,ACG21,BGHS,DyKuk1,DyKuk2,DyKuk3}. For Korteweg de Vries type equations, we mention \cite{ST21}. Finally, we mention a result on discrete Schrödinger equations \cite{LukSpo}. 

In this paper, what we call asymptotic Gaussianity is the fact that the formula of cumulants remain asymptotically valid at later times. 

In the first part of this paper, we adress these issues on a generic Hamiltonian equation. We work, similarly to \cite{DyKuk1,DyKuk2,DyKuk3}, and to \cite{Naz}, in the context of \emph{quasi-solutions} and of wave turbulence. We do not assume that the initial datum is somewhat small but we let the size of the torus go to $\infty$ and this is our asymptotic regime. The proof is mainly combinatorial.

In the second part of this paper, we pass from quasisolutions to full solutions to the incompressible Euler equation. For this part, we need a functional framework that fits both the initial datum and the Euler equation. We adapt the analytic functional framework of \cite{B-GCT11} keeping in mind that for our problem the initial datum is not localised. We also need to render explicit the abstract Cauchy-Kowaleskaia theorem, and for this, we use \cite{Caf90}. Finally, we need to estimate probabilities on the initial datum, we use a strategy very close to proving Fernique's theorem, see \cite{fernique}.

\subsection{Framework and results}

We consider a generic equation : 
\begin{equation}\label{genEq}
\partial_t u_L = K u_L + J_L (u_L,\hdots ,u_L) 
\end{equation}
on the torus $L\T^d$ of size $L$ and in dimension $d$. Here, $K$ is a skew-symmetric operator, and $J_L$ a $N$-linear map, the map $u$ has values in $\C^D$.

We assume that $K$ and $J_L$ take the following form in Fourier mode : for any test functions, $u_L,u_{L,1},\hdots,u_{L,N}$, we set
\begin{multline}\label{defOmegaPsi}
\widehat{Ku_L} (\xi) = i\omega(\xi) \hat u_L(\xi)\\
\widehat{J_L(u_{L,1},\hdots,u_{L,N})}(\xi) = \frac1{(2\pi L)^{d(N-1)/2}}\sum_{\xi_1+\hdots+\xi_N = \xi} \Psi(\xi_1,\hdots,\xi_N) (\hat u_{L,1}(\xi_1), \hdots, \hat u_N(\xi_{L,N}))
\end{multline}
where $\Psi(\xi_1,\hdots, \xi_N)$ is a $N$-linear map from $(\C^D)^N$ to $\C^D$, where for all $\xi \in \frac1{L} \Z^d$,
\[
\hat u_L (\xi) := \frac1{(2\pi L)^{d/2}} \int_{L\T^d} u(x) e^{-i\xi x}dx.
\]
We also assume that for all $u_{L,1},\hdots u_{L,N}$, 
\[
\widehat{J_L(u_{L,1},\hdots,u_{L,N})}(0) = 0
\]
such that the quantity
\[
\int_{L\T^d} dx u(x)
\]
is conserved under the action of the flow of \eqref{genEq} and thus can be chosen null.

Finally, we assume that $\Psi$ has at most linear growth : there exists $r\in [0,1]$ such that for all $(\xi_1,\hdots ,\xi_N) \in (\R^{d})^N$, in operator norm
\[
|\Psi (\xi_1,\hdots ,\xi_N)| \leq \max_{j=1}^N \an{\xi_j}^r.
\]

We set the following initial datum for \eqref{genEq} :
\begin{equation}\label{InitialDatum}
u(t=0)(x) = a_L (x) := \sum_{k\in \Z^d_*} \frac{e^{ikx/L}}{(2\pi L)^{d/2}} g_k a_{L,k}
\end{equation}
where $\Z^d_* = \Z^d\smallsetminus \{(0,\hdots,0)\}$. We write $u_L$ the solution to \eqref{genEq} with initial datum $a_L$.

In \eqref{InitialDatum}, $(g_k)_{k\in \Z^d_*}$ is a sequence of centred and normalized complex Gaussian variables such that for all $k\in \Z^d_*$, 
\[
g_k = \bar g_{-k}, 
\]
and such that if $k\neq l, -l$, then $g_k$ and $g_l$ are independent.

Finally, $(a_{L,k})_k$ is a sequence with values in $(\R^D)$ with finite support such that $a_{L,-k} = a_{L,k}$ for all $k\in \Z^d_*$.

We define by induction for $n\in \N$, $t\in \R$,
\begin{equation}\label{Picard expansion}
u_{L,0}(t) = e^{tK} a_L, \quad u_{L,n+1} (t) = \sum_{n_1+\hdots n_N = n} \int_{0}^t e^{(t-\tau)K}[J(u_{L,n_1}(\tau), \hdots, u_{L,n_N}(\tau))] d\tau.
\end{equation}

For $M \in \N$, 
\[
\sum_{n=0}^M u_{L,n}
\]
is called a quasi-solution. 

For a given $\xi \in \frac1{L}\Z_*^d$ and a given $t$, $\hat u_{L,n}(t)(\xi)$ is a vector in $\C^D$, we write $\hat u_{L,n}^{(i)}(t)(\xi)$ its $i$-th component.

\begin{remark}\label{rem:spaceInv} We note that the law of the initial datum is invariant under the action of space translations. For any space translation $\tau$, we also have 
\[
\tau K = K \tau,\quad \tau J(\cdot,\hdots,\cdot) = J(\tau \cdot,\hdots, \tau \cdot).
\]
Therefore, by induction on $n$ the law of $(u_{L,n})_n$ is invariant under space translations and therefore, for all $n,m,i,j,t$, 
\[
\E( u_{L,n}^{(i)}(t)(\xi) u_{L,m}^{(j)}(t)(\eta))
\]
is equal to $0$ unless $\eta = -\xi$.
\end{remark}

In this framework, we prove Theorem \ref{th:genEq}.

\begin{theorem}\label{th:genEq}
There exists $C = C(\Psi,N)$ such that for all $R\in \N^*$, $(n_1,\hdots,n_R) \in \N^R$, $(i_1,\hdots, i_R) \in [|1,d|]]^R$, $(\xi_1,\hdots,\xi_R) \in (\frac1{L}\Z_*^d)^R$, all $t\in \R$, we have 
\begin{equation}\label{EstTh}
\Big| \E\Big(\prod_{l=1}^R \hat u_{L,n_l}^{(i_l)}(t)(\xi_l)\Big) - \sum_{\mathcal O \in \mathcal P_R} \prod_{\{l,l'\}\in \mathcal O} \E(\hat u_{L,n_l}^{(i_l)}(t)(\xi_l) \hat u_{L,n_{l'}}^{(i_{l'})}(t)(\xi_{l'}))\Big| \leq \frac{S!}{(S/2)!} \|(a_{L,k})_k\|_{\ell^2\cap \ell^\infty} \frac{(CA_L^r)^{\sum n_l}}{(2\pi L)^{d/2}}
\end{equation}
if $S =  \sum_{l}n_l(N-1) + R$ is even, otherwise 
\[
\E\Big(\prod_{l=1}^R \hat u_{L,n_l}^{(i_l)}(t)(\xi_l)\Big) = \sum_{\mathcal O \in \mathcal P_R} \prod_{\{l,l'\}\in \mathcal O} \E(\hat u_{L,n_l}^{(i_l)}(t)(\xi_l) \hat u_{L,n_{l'}}^{(i_{l'})}(t)(\xi_{l'})) .
\]
Above we used the notations
\[
A_L = \sup \{\an{\frac{k}{L}} \;|\; k\in \Z^d_*, a_{L,k}\neq 0\},
\]  
the set $\mathcal P_R$ is the set of partitions of $[|1,R|]$ that contains only pairs (hence it is empty if $R$ is odd).
\end{theorem}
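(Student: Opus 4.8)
The plan is to unfold the Picard iterates into tree sums, take the Gaussian expectation by Wick's theorem, reorganise the resulting pairings through the moment-to-cumulant relation, and show by power counting that every genuinely higher-order contribution carries a spare factor $(2\pi L)^{-d/2}$. Throughout write $X_l=\hat u_{L,n_l}^{(i_l)}(t)(\xi_l)$. First I would expand $\hat u_{L,n}(t)(\xi)$ using \eqref{Picard expansion} and \eqref{defOmegaPsi}: each iterate is a sum over rooted $N$-ary trees with $n$ internal nodes, the root frequency fixed to $\xi$. A tree with $n$ internal nodes has $m=n(N-1)+1$ leaves, each carrying an initial amplitude $g_{k_v}a_{L,k_v}$; it also carries $n$ factors $\Psi$, phases coming from $e^{tK}$ and the time integrations, and the normalising power $(2\pi L)^{-dn(N-1)/2}$. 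Reading this off for the product, $\prod_{l=1}^R X_l$ is a sum over $R$-tuples of trees carrying altogether $S=\sum_l n_l(N-1)+R$ Gaussian amplitudes, with total normalising power $(2\pi L)^{-d(S-R)/2}$. The growth hypothesis $|\Psi|\le \max_j\an{\xi_j}^r$ together with the finite support of $(a_{L,k})_k$ bounds each $\Psi$ by $A_L^r$, producing the factor $(CA_L^r)^{\sum n_l}$ once the number of tree shapes (a Catalan-type count) is absorbed into $C^{\sum n_l}$.

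Next I would take the expectation. Since the $g_k$ are centred complex Gaussians with $g_k=\bar g_{-k}$ and $\E(g_kg_l)=\delta_{l,-k}$, Wick's theorem turns every moment into a sum over pairings of the $S$ leaves, each pair forcing opposite frequencies on the two leaves it joins. If $S$ is odd every term vanishes, and the pairwise sum vanishes as well (either $R$ is odd and $\mathcal P_R=\emptyset$, or $R$ is even and some block necessarily has an odd number of leaves, killing its covariance); this is the odd case. For the even case the point is that $\sum_{\mathcal O\in\mathcal P_R}\prod_{\{l,l'\}}\E(X_lX_{l'})$ is precisely the subsum of Wick pairings whose induced graph on the factors $1,\dots,R$ splits into pieces meeting at most two factors. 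Equivalently, using the moment-to-cumulant relation and $\kappa(X_l,X_{l'})=\E(X_lX_{l'})$ for centred variables, the left-hand side of \eqref{EstTh} minus this sum equals a sum of products of joint cumulants in which at least one block has size $\ge 3$, each cumulant being the subsum over pairings that connect all the factors of its block.

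The decisive estimate is the power counting for one connected block $B$ of $p=|B|$ factors carrying $\sigma$ leaves. Its normalisation is $(2\pi L)^{-d(\sigma-p)/2}$; after Wick the $\sigma$ leaves pair into $\sigma/2$ frequencies, tied by the $p$ root relations $\sum_{v}k_v=L\xi_l$ summed over the leaves of each $T_l$. Connectedness forces $\sum_{l\in B}\xi_l=0$, so these relations have rank $p-1$ and leave $\sigma/2-(p-1)$ free frequency sums. Estimating the free sums with the $\ell^2$ norm of $(a_{L,k})$ and the $p-1$ constrained ones with the $\ell^\infty$ norm, and the $\Psi$'s by $A_L^{r\sum_{l\in B}n_l}$, one finds that a connected block of $p$ factors is smaller than the corresponding pure pairing by a factor $(2\pi L)^{-d(p-2)/2}$. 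Hence any block of size $\ge 3$ supplies at least one $(2\pi L)^{-d/2}$, and the worst admissible partition (a single triple together with pairs) realises exactly the power $(2\pi L)^{-d/2}$ of \eqref{EstTh}; bounding the number of surviving pairings of the $S$ leaves by $S!/(S/2)!$ and the norm bookkeeping by $\|(a_{L,k})_k\|_{\ell^2\cap\ell^\infty}$ then yields the stated right-hand side.

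I expect the main obstacle to be steps two and three carried out uniformly: justifying the connected-diagram reading of the joint cumulants in this complex-Gaussian framework, and then performing the free-versus-constrained frequency count over every tree shape and every pairing simultaneously, so that the single momentum relation $\sum_{l\in B}\xi_l=0$ available in each block of size $\ge 3$ is reliably the mechanism converting into the gain $(2\pi L)^{-d/2}$ — while keeping the combinatorial proliferation of trees and of pairings inside the factorial prefactor $S!/(S/2)!$.
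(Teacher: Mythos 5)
Your proposal follows essentially the same route as the paper: tree expansion of the Picard iterates, Wick expansion over pairings of the $S$ leaves, classification of pairings by the connected components (the paper's ``orbits'' $\mathcal O_\sigma$) they induce on the $R$ factors, and the same rank-$(p-1)$ count of the momentum constraints showing that a connected block of $p\geq 3$ factors loses a factor $(2\pi L)^{-d(p-2)/2}$ relative to pure pairings. Your cumulant phrasing is only a repackaging of the paper's identification of the maximal-orbit pairings with the pair-partition covariance sum, and your $\ell^2$-versus-$\ell^\infty$ bookkeeping of free versus constrained frequency sums and the $S!/(S/2)!$ bound on the number of fixed-point-free involutions match the paper's estimates.
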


\begin{remark} Taking $a_{L,k} = a(\frac{k}{L})$ where $a$ is a bounded, compactly supported function, we have 
\[
\|(a_{L,k})_k\|_{\ell^2\cap \ell^\infty} \lesssim \|a\|_{L^2\cap L^\infty(\R^d)}
\]
and 
\[
A_L \leq A_\infty = \sup \{\an\xi \; |\; a(\xi)\neq 0\}.
\]
Hence in this context the difference in \eqref{EstTh} is a $\mathcal O(L^{-d/2})$.
\end{remark}

\begin{remark} This theorem contains the asymptotic formula of cumulants for the quasisolutions, but considering Remark \ref{rem:spaceInv}, it also implies asymptotic independence.
\end{remark}

In the context of the Euler incompressible equation : 
\begin{equation}\label{Euler}
\left \lbrace{\begin{array}{c}
\partial_t u_L + u_L\cdot \nabla u_L = P_L \\
\nabla \cdot u_L =0 \\
u_L(t=0) = a_L 
\end{array}}\right.
\end{equation}
where $P_L$ is the pressure and $\nabla \cdot$ is the divergence, we assume that the sequences $(a_{L,k})_k$ take the form :
\[
a_{L,k} = \varepsilon(L) a(\frac{k}{L})
\]
where $\varepsilon(L) = O(\frac1{\sqrt{\ln L}})$, such that $\varepsilon^{-1}$ has at most polynomial growth in $L$ and where $a$ is a bounded, compactly supported function. In order to have initially $\nabla \cdot u_L(t=0) = 0$, we impose that for all $\xi \in \R^d$, $\xi\cdot a(\xi) = 0$. We prove (local) well-posedness of \eqref{Euler} in the analytical framework presented in Subsection \ref{subsec:wellposedness}. In this analytical framework, the size of the initial datum can be up to $\varepsilon(L) \sqrt{\ln L}$, we refer to Appendix \ref{app:sizeofaL}. But if one looks at the initial datum locally, it is as small as $\varepsilon (L)$. Indeed, we have that for a given $x\in L\T^d$, the random variable
\[
\sum_{k\in \Z^d_*} \frac{e^{ikx/L}}{(2\pi L)^{d/2}} g_k a(k/L)
\]
converges in law towards the Wiener integral
\[
\frac1{(2\pi)^{d/2}}\int e^{i\xi x} a(\xi) dW(\xi)
\]
where $W$ is a multidimensional Brownian motion. However, the regime we impose on $\varepsilon(L)$ is quite different that the ones in \cite{denghani21,DH21}, which are imposed by the dispersion of the Schrödinger equation. What is more, we do not claim that we reach derivation of the kinetic equation, or that we reach kinetic times. The result is valid for any time if $\varepsilon (L) = o((\ln L)^{-1/2})$, or for small times if $\varepsilon(L) = O((\ln L)^{-1/2})$ but we do not rescale the time. In this context, we prove Theorem \ref{th:Euler}.

\begin{theorem}\label{th:Euler} There exist Banach spaces $(\mathcal{X}, \|\cdot\|_\chi)$ and $\mathcal Y_\theta$ such that 
\[
\mathcal X  \subseteq \mathcal C(\R^d, \C^d), \quad \mathcal Y_\theta \subseteq \mathcal C([-\theta,\theta]\times \R^d, \C^d)
\]
such that for all $\theta \in \R_+$, there exists $A(\theta) >0$ such that the Cauchy problem
\[
\left \lbrace{\begin{array}{c}
\partial_t u + u\cdot \nabla u = P \\
\nabla \cdot u =0 \\
u(t=0) = u_0 
\end{array}}\right.
\]
is well-posed in $\mathcal Y_\theta$ for all $u_0$ in the ball of $\mathcal X$ of center $0$ and radius $A(\theta)$. The map $\theta \mapsto A(\theta)$ can be chosen nonincreasing. The flow hence defined conserves periodicity.

What is more, seeing $a_L$ as a periodic function of $\R^d$ we get that there exists $c>0$ such that if $A(\theta) \geq \frac{\sqrt{\ln L}\varepsilon(L)}{c}$, we have that $a_L$ belongs to $\chi$ and 
\[
\mathbb P(\|a_L\|_{\mathcal X}> A(\theta))\leq e^{-cA(\theta)^2\varepsilon^{-2}(L)}.
\]
Writing 
\[
\mathcal E_{L,\theta} = \{\|a_L\| \leq A(\theta)\},
\]
we get that for all $R\in \N^*$, there exists $C = C(R,a,\theta, \mathcal X, \varepsilon) $ (the constant depends on the functional framework and the function $\varepsilon$ but not on $L$) and $c_1 = c_1(a,\theta,R)$, $c_2 = c_2(a,\theta,\mathcal X)$ such that for all $(i_1,\hdots, i_R) \in [|1,d|]]^R$, $(\xi_1,\hdots,\xi_R) \in (\frac1{L}\Z_*^d)^R$, all $t\in [-\theta,\theta]$, and for all $L$, assuming
\[
\varepsilon(L)\sqrt{\ln L} \leq c_1(a,\theta,R),\quad A(\theta) \geq \frac{\sqrt{\ln L}\varepsilon(L)}{c_2},
\]
we have
\begin{equation}\label{EstEuler}
\Big| \E\Big({\bf 1}_{\mathcal E_{L,\theta}}\prod_{l=1}^R \hat u_{L}^{(i_l)}(t)(\xi_l)\Big) - \sum_{\mathcal O \in \mathcal P_R} \prod_{\{l,l'\}\in \mathcal O} \E({\bf 1}_{\mathcal E_{L,\theta}}\hat u_{L,n_l}^{(i_l)}(t)(\xi_l) \hat u_{L,n_{l'}}^{(i_{l'})}(t)(\xi_{l'}))\Big| \leq C \varepsilon(L)^R L^{-d/2} .
\end{equation}
What is more, if $\xi_l \neq -\xi_{l'}$, we have 
\[
\E({\bf 1}_{\mathcal E_{L,\theta}}\hat u_{L,n_l}^{(i_l)}(t)(\xi_l) \hat u_{L,n_{l'}}^{(i_{l'})}(t)(\xi_{l'})) \lesssim_{a,\theta,\mathcal X,\varepsilon} \varepsilon(L)^R L^{-d/2}.
\]
\end{theorem}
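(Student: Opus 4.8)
The plan is to prove Theorem~\ref{th:Euler} in three stages, using Theorem~\ref{th:genEq} as the analytic input once the full Euler flow has been reduced to its Picard expansion \eqref{Picard expansion}. First I would set up the deterministic well-posedness: I would build $\mathcal X$ as a space of analytic functions on $\R^d$ with a fixed radius of analyticity, normed on the Fourier side by a weight of the form $e^{\rho\an\xi}$ against the spectral data (so that non-localised, merely bounded periodic data are admissible, in contrast with the Schwartz setting of \cite{B-GCT11}), and $\mathcal Y_\theta$ the corresponding time-dependent scale on $[-\theta,\theta]$ in which the radius shrinks linearly in $t$. Since the Euler nonlinearity $u\cdot\nabla u$ loses exactly one derivative, this loss is absorbed by the shrinking radius, which is precisely the hypothesis of the abstract Cauchy--Kowalevskaya theorem; rendering \cite{Caf90} quantitative then yields, for each $\theta$, an admissible radius $A(\theta)$ such that data of $\mathcal X$-norm at most $A(\theta)$ generate a solution in $\mathcal Y_\theta$, with $\theta\mapsto A(\theta)$ nonincreasing. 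Translation invariance (Remark~\ref{rem:spaceInv}) shows the flow preserves periodicity, so that $a_L$, read as a periodic function of $\R^d$, generates a solution on the event where $\|a_L\|_\mathcal X\le A(\theta)$, and there the solution coincides with the sum of its Picard series.

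Second I would prove the probabilistic estimate on $a_L$. Viewing $\|a_L\|_\mathcal X$ as a function of the Gaussian family $(g_k)$, it is Lipschitz for the $\ell^2$ metric with constant of order $\varepsilon(L)$ (times the analytic weight of the finite support of $a$), and its expectation is of order $\varepsilon(L)\sqrt{\ln L}$, the logarithm coming from the effective number $\sim L^d$ of modes over which an analytically weighted supremum is formed. The strategy of \cite{fernique} then gives the sub-Gaussian tail $\mathbb P(\|a_L\|_\mathcal X>A(\theta))\le e^{-cA(\theta)^2\varepsilon^{-2}(L)}$, valid once $A(\theta)\gtrsim\sqrt{\ln L}\,\varepsilon(L)$, which is the stated threshold and which defines $\mathcal E_{L,\theta}$. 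With $A(\theta)$ of the critical size $\sim\sqrt{\ln L}\,\varepsilon(L)$ this tail is only polynomially small, $\mathbb P(\mathcal E_{L,\theta}^c)\lesssim L^{-c/c_2^2}$, so I would choose $c_2$ small enough that $\mathbb P(\mathcal E_{L,\theta}^c)^{1/2}\lesssim L^{-d/2}$.

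Third I would carry out the moment comparison. On $\mathcal E_{L,\theta}$ each factor of the full solution equals its convergent Picard series, the convergence being geometric with ratio $\sim\varepsilon(L)\sqrt{\ln L}$, which is why the hypothesis $\varepsilon(L)\sqrt{\ln L}\le c_1$ is imposed; expanding the product and the covariances gives
\[
\E\big({\bf 1}_{\mathcal E_{L,\theta}}\textstyle\prod_{l=1}^R\hat u_L^{(i_l)}(t)(\xi_l)\big)=\sum_{(n_l)}\E\big({\bf 1}_{\mathcal E_{L,\theta}}\textstyle\prod_{l=1}^R\hat u_{L,n_l}^{(i_l)}(t)(\xi_l)\big),
\]
and similarly for the Gaussian pairings. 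Substituting $a_{L,k}=\varepsilon(L)a(k/L)$, each iterate $\hat u_{L,n_l}$ is homogeneous of degree $n_l+1$ in $\varepsilon(L)$, so the generic term carries $\varepsilon(L)^{R+\sum n_l}$, consistent with the exponent $S=\sum n_l(N-1)+R$ of Theorem~\ref{th:genEq}; applying that theorem termwise bounds the deviation of each term from its Gaussian pairing by $O(\varepsilon(L)^{R+\sum n_l}L^{-d/2})$. Removing the indicator is done by Cauchy--Schwarz: $\E({\bf 1}_{\mathcal E_{L,\theta}^c}\prod\hat u_{L,n_l})$ is at most $\mathbb P(\mathcal E_{L,\theta}^c)^{1/2}$ times an $L^2$-moment of the polynomial $\prod\hat u_{L,n_l}$, and the choice of $c_2$ above makes this $\lesssim\varepsilon(L)^{R+\sum n_l}L^{-d/2}$. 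The decorrelation statement for $\xi_l\neq-\xi_{l'}$ follows from the same expansion together with Remark~\ref{rem:spaceInv}, which forces each unconditioned covariance to vanish, leaving only the small indicator correction.

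The main obstacle is the resummation over the Picard orders $(n_l)$: the termwise bound of Theorem~\ref{th:genEq} carries the combinatorial factor $\tfrac{S!}{(S/2)!}$ with $S=R+\sum n_l$, which grows super-exponentially and is not compensated by the geometric gain $\varepsilon(L)^{\sum n_l}$ alone, while the crude deterministic control of the Picard remainder loses the decisive factor $L^{-d/2}$. The point is therefore to retain the $L^{-d/2}$ gain uniformly in the order, which is exactly what the finite radius of analyticity of $\mathcal X$ is meant to provide: the analytic weights encode the factorial growth as a convergent power series, so that the chaos expansion can be summed to all orders rather than merely truncated. Making this resummation rigorous --- reconciling the factorial combinatorics of Theorem~\ref{th:genEq} with the analytic decay of the iterates while keeping the $L^{-d/2}$ gain and the conditioning harmless --- is the delicate heart of the argument, and it is precisely what forces the joint calibration of the three scales $A(\theta)$, $\varepsilon(L)\sqrt{\ln L}$ and $\mathbb P(\mathcal E_{L,\theta}^c)$ recorded in the hypotheses.
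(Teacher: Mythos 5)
Your stages 1 and 2 match the paper: the space $\mathcal X$ is indeed a Wiener-type analytic space ($\|f\|_\rho=\sum_n e^{\rho|n|}\|\phi_n*f\|_{L^\infty}$), the well-posedness is a quantitative Cauchy--Kowalevskaya argument with a radius shrinking linearly in $t$, and the tail bound on $\|a_L\|_{\mathcal X}$ is obtained by a Fernique-type moment/Markov optimization giving exactly the stated threshold $A(\theta)\gtrsim\sqrt{\ln L}\,\varepsilon(L)$. The skeleton of stage 3 (expand in Picard iterates, apply Theorem \ref{th:genEq} termwise, remove the indicator by Cauchy--Schwarz) is also the right one.

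However, there is a genuine gap at what you yourself call the delicate heart of the argument, and the mechanism you propose to close it is not the one that works. You reject truncation on the ground that the crude deterministic control of the Picard remainder ``loses the decisive factor $L^{-d/2}$,'' and instead propose to sum the chaos expansion to all orders, claiming that the analytic weights of $\mathcal X$ ``encode the factorial growth as a convergent power series.'' This cannot work as stated: Theorem \ref{th:genEq} is a statement about quasi-solutions with finitely supported spectral data in which the analyticity of $\mathcal X$ plays no role, and its combinatorial factor $S!/(S/2)!$ with $S=R+\sum n_l$ grows like $(\sum n_l/2)!$, which is not summable against $\varepsilon(L)^{\sum n_l}$ for any fixed $L$, no matter how the deterministic iterates decay. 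The paper's actual resolution is the one you dismissed: truncate at $M=M(L)\sim K\ln L$ with $K$ large. The remainder $\mathcal R_M=\sum_{n>M}u_n$ is controlled purely deterministically on $\mathcal E_{L,\theta}$ by $\|\mathcal R_M\|_0\le 2^{-M}A$ together with the crude bound $\|\hat v\|_{L^\infty}\le(2\pi L)^{d/2}\|v\|_0$; this does pay a factor $L^{dR/2}$, but $2^{-M}=L^{-K\ln 2}$ beats any fixed power of $L$, so the $L^{-d/2}$ gain is recovered by choosing $K$ large enough (depending on $R$, $d$ and the polynomial growth of $\varepsilon^{-1}$). For the truncated part, the combinatorial factor is bounded by $2^RR!\,(2\#S)^{\frac12\sum n_l}$ with $\#S\le(M+1)R\lesssim R\ln L$, so each term carries $(C\sqrt{R(M+1)}\,\varepsilon(L))^{\sum n_l}\le 2^{-\sum n_l}$ precisely under the hypothesis $\varepsilon(L)\sqrt{\ln L}\le c_1(a,\theta,R)$ --- this is where that hypothesis comes from, not from a geometric convergence ratio of the Picard series (which on $\mathcal E_{L,\theta}$ is simply $1/2$ by the choice of $A(\theta)$). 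Without the logarithmic truncation and this joint calibration, your resummation step does not close.
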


\begin{remark} If $\varepsilon(L) = o((\ln L)^{-1/2})$, then the result is global, because the inequalities 
\[
A(\theta) \geq \frac{\sqrt{\ln L}\varepsilon(L)}{c_2}, \quad \varepsilon(L) \sqrt{\ln L} \leq c_1
\]
are satisfied for $L$ big enough. Otherwise, we need,
\[
A(\theta) > \frac{\limsup (\varepsilon(L) \sqrt{\ln L})}{c_2},
\]
which requires that $\theta$ has to be small enough. In other words, if $\limsup \varepsilon(L) \sqrt{\ln L}= c$, we need both $\theta$ to be smaller than a constant depending on the functional framework, the function $a$ and $c$ (non-increasing with $c$). But we also need that $c$ is smaller than a constant depending on $R,a$ and the functional framework.
\end{remark}

\subsection{Notations}

By $\an{\cdot}$, we denote the Japanese bracket, that is for $x\in \R^d$, 
\[
\an{x} = \sqrt{1 + \sum_{i=1}^d x_i^2}.
\]

By $[|a,b|]$ with $a\leq b \in \R$, we denote $[a,b] \cap \N$.

By the lexicographical order on $\N^2$, we mean the order defined for $(l_1,j_1)$ and $(l_2,j_2) \in \N^2$ as 
\[
(l_1,j_1) < (l_2,j_2) \quad \Leftrightarrow \quad l_1 < l_2 \textrm{ or } (l_1=l_2 \textrm{ and } j_1<j_2).
\]

For the norms on the sequence $(a_{L,k})_k$, we denote
\[
\|(a_{L,k})_k\|_{\ell^\infty} = \sup_{k\in \Z^d_*} |a_{L,k}|, \quad \|(a_{L,k})_k\|_{\ell^2} = \frac1{(2\pi L)^{d/2}} \sqrt{\sum_{k\in \Z^d_*} |a_{L,k}|^2}, \quad A_L = \sup \{\an{\frac{k}{L}} \; | \; a_{L,k}\neq 0\}, 
\]
such that if $a_{L,k} = a(k/L)$ with $a\in L^\infty$ with compact support, setting
\[
 A_\infty = \sup\{\an{\xi} \; |\; a(\xi) \neq 0 \},
\]
we have, for all $L$,
\[
A_L \leq A_\infty, \quad \|(a_{L,k})_k\|_{\ell^\infty} \leq \|a\|_{L^\infty}, \quad \|(a_{L,k})_k\|_{\ell^2} \leq A_\infty^{d/2}\pi^{-d/2}\|a\|_{L^\infty}
\]
when $L \rightarrow \infty$. We also denote
\[
\|(a_{L,k})_k\|_{\ell^\infty\cap \ell^2} = \|(a_{L,k})_k\|_{\ell^\infty}+\|(a_{L,k})_k\|_{ \ell^2}.
\]

The spaces $L^p(\R^d)$ are the standard Lebesgue spaces.

Finally, in all the paper but Subsection \ref{subsec:wellposedness}, we consider Fourier transforms for $L$-periodic functions, or for functions of the torus $L\T^d$. We use the previously mentioned convention
\[
\hat u_L (\xi) = \frac1{(2\pi L)^{d/2}} \int_{L\T^d} u_L(x) e^{-i\xi x} dx
\]
for $u_L $ defined on $L\T^d$, $\xi \in \frac1{L}\Z_*^d$. With this convention, we have
\[
\hat a_L(\frac{k}{L}) = a_{L,k} g_k.
\]
When $u_L$ also depends on time, we set for all $t\in \T$, $\hat u_L (t)(\xi) = \widehat{u_L(t)} (\xi)$.

In Subsection \ref{subsec:wellposedness}, we consider functions of the full $\R^d$, without conditions of periodicity, we use the convention that the Fourier transform of a Schwartz class function $f$ at $\xi\in \R^d$ is defined as
\[
\frac1{(2\pi)^{d/2}} \int_{\R^d} f(x)e^{-ix\xi} dx.
\]

\subsection{Acknowledgements}

The author thanks Nikolay Tzvetkov for suggesting to study the Euler equation and providing helpful literature.

The author is supported by ANR grant ESSED ANR-18-CE40-0028.

\section{Asymptotic independence of the quasi solutions}

\subsection{N-trees}

We introduce the notion of $N$-trees. 

\begin{definition}\label{def:Ntrees} Let $\mathcal A_0 = \{()\}$ and define by induction for all $n\in \N$,
\[
\mathcal A_{n+1} = \{(A_1,\hdots,A_{N}) \; |\; \forall j, A_j \in \mathcal A_{n_j},\; \sum_{n_j}= n\}
\]
We call the elements in $\mathcal A_n$ the $N$-trees with $n$ nodes. We call $()$ the trivial tree.
\end{definition}

\begin{remark} A $N$-tree is a sequence of parenthesis and commas. Another way of defining $N$-trees is to use Polish notation and write
\[
\mathcal A_0 = \{0\}, \quad \mathcal A_{n+1} = \{1 A_1\hdots A_N\; |\;  \forall j, A_j \in \mathcal A_{n_j},\; \sum_{n_j}= n\}
\]
and see the $N$-trees as sequences of $0$ and $1$. In this case, the decomposition $1A_1\hdots A_N$ is unique (see Appendix \ref{app:Polish}).
\end{remark}

\begin{proposition}\label{prop:PicardExp} Define by induction on the $N$-trees, for all $t\in \R$, $A\in \cup_n \mathcal A_n$,
\[
F_{L,A} (t) = \left \lbrace{\begin{array}{cc}
u_{L,0}(t) & \textrm{ if } A = () \\
\int_{0}^t e^{(t-\tau)K}[J_L(F_{L,A_1}(\tau),\hdots,F_{L,A_N}(\tau))]d\tau & \textrm{ if } A=(A_1,\hdots,A_N)
\end{array}} \right. .
\]
We have for $n\in \N$,
\[
u_{L,n}(t) = \sum_{A\in \mathcal A_n} F_{L,A}(t).
\]
\end{proposition}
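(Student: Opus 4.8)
The plan is to prove Proposition \ref{prop:PicardExp} by induction on $n$, matching the recursive structure of the $N$-trees in Definition \ref{def:Ntrees} against the Picard iteration \eqref{Picard expansion}. First I would check the base case $n=0$: by Definition \ref{def:Ntrees} we have $\mathcal A_0 = \{()\}$, so the sum $\sum_{A\in\mathcal A_0} F_{L,A}(t)$ reduces to the single term $F_{L,()}(t)$, which by definition equals $u_{L,0}(t)$. This is exactly \eqref{Picard expansion} at $n=0$.

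For the inductive step, I would assume that $u_{L,m}(t) = \sum_{A\in\mathcal A_m} F_{L,A}(t)$ holds for every $m \leq n$, and prove the identity for $n+1$. Starting from the definition \eqref{Picard expansion},
\[
u_{L,n+1}(t) = \sum_{n_1+\hdots+n_N = n} \int_0^t e^{(t-\tau)K}\big[J_L(u_{L,n_1}(\tau),\hdots,u_{L,n_N}(\tau))\big]\,d\tau,
\]
I would substitute the inductive hypothesis $u_{L,n_j}(\tau) = \sum_{A_j\in\mathcal A_{n_j}} F_{L,A_j}(\tau)$ into each slot. Using $N$-linearity of $J_L$ and linearity of the integral and of the propagator $e^{(t-\tau)K}$, the integrand expands into a sum over all tuples $(A_1,\hdots,A_N) \in \mathcal A_{n_1}\times\hdots\times\mathcal A_{n_N}$, giving
\[
u_{L,n+1}(t) = \sum_{n_1+\hdots+n_N=n}\ \sum_{\substack{A_j\in\mathcal A_{n_j}}} \int_0^t e^{(t-\tau)K}\big[J_L(F_{L,A_1}(\tau),\hdots,F_{L,A_N}(\tau))\big]\,d\tau.
\]
By the definition of $F_{L,A}$ for $A=(A_1,\hdots,A_N)$, the inner integral is precisely $F_{L,(A_1,\hdots,A_N)}(t)$.

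It then remains to recognize that the combined index set is exactly $\mathcal A_{n+1}$. By Definition \ref{def:Ntrees}, an element of $\mathcal A_{n+1}$ is a tuple $(A_1,\hdots,A_N)$ with $A_j\in\mathcal A_{n_j}$ and $\sum_j n_j = n$; this is in bijective correspondence with the pairs consisting of a composition $(n_1,\hdots,n_N)$ of $n$ together with a choice of $A_j$ in each $\mathcal A_{n_j}$. Hence the double sum over compositions and tuples collapses to a single sum over $A\in\mathcal A_{n+1}$, yielding $u_{L,n+1}(t) = \sum_{A\in\mathcal A_{n+1}} F_{L,A}(t)$, which closes the induction.

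I do not expect any serious obstacle here, as the statement is essentially a bookkeeping identity: the recursive definition of $F_{L,A}$ was designed to mirror the Picard scheme. The only point requiring mild care is ensuring the index bijection is clean, namely that every $N$-tree in $\mathcal A_{n+1}$ arises from exactly one composition $(n_1,\hdots,n_N)$ of $n$ and one tuple of subtrees, with no double-counting; this follows from the uniqueness of the decomposition $1A_1\hdots A_N$ noted in the remark following Definition \ref{def:Ntrees} (proved in Appendix \ref{app:Polish}). One should also verify that the exchange of the finite sums with the time integral and the linear propagator is legitimate, which is immediate since all sums are finite (the $a_{L,k}$ have finite support, so each $F_{L,A}$ is well defined) and $J_L$ is genuinely $N$-linear.
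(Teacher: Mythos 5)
Your proof is correct and follows essentially the same route as the paper: strong induction on $n$, substitution of the inductive hypothesis into the Picard iteration \eqref{Picard expansion}, expansion by $N$-linearity of $J_L$, and identification of the resulting double sum over compositions and subtree tuples with the sum over $\mathcal A_{n+1}$. Your additional remarks on the uniqueness of the decomposition and the finiteness of the sums only make explicit points the paper leaves implicit.
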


\begin{proof} The proof follows by induction on $n$. For $n=0$, this is by definition. Otherwise, we have 
\[
u_{L,n+1}(t) = \sum_{n_1+\hdots+n_N =n } \int_{0}^t e^{(t-\tau)K}[J_L(u_{L,n_1}(\tau), \hdots,u_{L,n_N}(\tau))] d\tau.
\]
Using the induction hypothesis and the fact that all the sums are finite, we get
\[
u_{L,n+1}(t) =\sum_{n_1+\hdots+n_N =n } \sum_{A_j \in \mathcal A_{n_j}} \int_{0}^t e^{(t-\tau)K}[J_L(F_{L,A_1}(\tau),\hdots,F_{L,A_N}(\tau))]d\tau.
\]
We recognize
\[
u_{L,n+1}(t) =\sum_{n_1+\hdots+n_N =n } \sum_{A_j \in \mathcal A_{n_j}} F_{L,(A_1,\hdots,A_N)}(t)
\]
and we use the definition of $N$-trees to conclude.
\end{proof}

\begin{definition}\label{def:labelling} Let $n\in \N$ and $\vec k = (k_1,\hdots,k_{(N-1)n+1}) \in ( Z^{d}_*)^{(N-1)n+1}$. Let $A \in \mathcal A_n$. Define $F_{L,A,\vec k}$ by induction on $n$ in the following way. If $n=0$ then $A = ()$ and $\vec k = (k_1)$, we set
\[
F_{L,A,\vec k}(t) = e^{it\omega (\frac{k_1}{L})} g_{k_1} a_{L,k_1}.
\]
If $n = m+1$ with $m\in \N$, there exists $n_1, \hdots n_N$ such that $\sum n_j = m$ and $A_j \in \mathcal A_{n_j}$ such that $A= (A_1,\hdots,A_N)$. We set $\tilde n_j = \sum_{l<j} ((N-1)n_l + 1)$ and 
\[
\vec k_j = (k_{(N-1) \tilde n_j + 1},\hdots, k_{\tilde n_{j+1}}) \in  (\Z^d_*)^{n_j(N-1)+1}.
\]
(Note that $\tilde n_{N+1} = \sum_{j=1}^N ((N-1)n_j +1) = (N-1)m + N = (N-1)n +1$.)  We set also $R(\vec k) = \frac1{L}\sum_{j=1}^{(N-1)n+1} k_j$.

We now set
\[
F_{L,A,\vec k}(t) = \frac1{(2\pi L)^{d(N-1)/2}}\int_{0}^t e^{i(t-\tau)\omega(R(\vec k))} \Psi(R(\vec k_1),\hdots,R(\vec k_N))( F_{L,A_1,\vec k_1}(\tau),\hdots, F_{L,A_N,\vec k_N}(\tau)) d\tau.
\]
\end{definition}

\begin{proposition}\label{prop:PicardExpFourier} We have for all $n\in \N$ and $A \in \mathcal A_n$,
\[
\widehat{F_{L,A}(t)}(\xi) = \sum_{R(\vec k) = \xi}  F_{L,A,\vec k}(t) .
\]
\end{proposition}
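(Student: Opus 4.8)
The plan is to prove Proposition~\ref{prop:PicardExpFourier} by induction on $n$, mirroring exactly the inductive structure used to define $F_{L,A,\vec k}$ in Definition~\ref{def:labelling} and the recursive definition of $F_{L,A}$ in Proposition~\ref{prop:PicardExp}. The statement to establish is that the Fourier coefficient $\widehat{F_{L,A}(t)}(\xi)$ decomposes as the sum over all labellings $\vec k$ with $R(\vec k) = \xi$ of the labelled contributions $F_{L,A,\vec k}(t)$. Since both sides are built by the same recursion, the proof should be a matter of checking that the Fourier transform intertwines the two constructions at each level.

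First I would treat the base case $n=0$, where $A = ()$ and $F_{L,A}(t) = u_{L,0}(t) = e^{tK} a_L$. Taking the Fourier transform and using $\widehat{Ku_L}(\xi) = i\omega(\xi)\hat u_L(\xi)$ together with $\hat a_L(k/L) = a_{L,k} g_k$, one finds $\widehat{u_{L,0}(t)}(\xi) = e^{it\omega(\xi)} a_{L,k} g_k$ when $\xi = k/L$. On the labelling side, for $n=0$ the only labelling is $\vec k = (k_1)$ with $R(\vec k) = k_1/L = \xi$, and $F_{L,A,\vec k}(t) = e^{it\omega(k_1/L)} g_{k_1} a_{L,k_1}$. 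These match, so the base case holds with a single term in the sum.

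For the inductive step, I would write $A = (A_1,\hdots,A_N)$ with $A_j \in \mathcal A_{n_j}$ and $\sum n_j = n-1$, and apply the Fourier transform to the defining integral formula for $F_{L,A}(t)$. Using the Fourier representation \eqref{defOmegaPsi} of $J_L$, the Fourier coefficient at $\xi$ becomes
\[
\widehat{F_{L,A}(t)}(\xi) = \frac1{(2\pi L)^{d(N-1)/2}} \int_0^t e^{i(t-\tau)\omega(\xi)} \sum_{\eta_1+\hdots+\eta_N = \xi} \Psi(\eta_1,\hdots,\eta_N)\big(\widehat{F_{L,A_1}(\tau)}(\eta_1),\hdots,\widehat{F_{L,A_N}(\tau)}(\eta_N)\big)\, d\tau.
\]
Then I would substitute the induction hypothesis $\widehat{F_{L,A_j}(\tau)}(\eta_j) = \sum_{R(\vec k_j) = \eta_j} F_{L,A_j,\vec k_j}(\tau)$ for each factor, interchange the (finite) sums with the convolution sum and the integral, and reorganize. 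The constraint $\eta_1 + \hdots + \eta_N = \xi$ together with $R(\vec k_j) = \eta_j$ becomes exactly the single constraint $R(\vec k) = \xi$ where $\vec k$ is the concatenation of the $\vec k_j$ under the indexing convention of Definition~\ref{def:labelling}. Comparing the resulting expression term by term with the recursive definition of $F_{L,A,\vec k}(t)$ closes the induction.

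The main obstacle I expect is purely bookkeeping: verifying that the concatenation convention $\tilde n_j = \sum_{l<j}((N-1)n_l+1)$, which partitions the index vector $\vec k$ into the subvectors $\vec k_1,\hdots,\vec k_N$, sets up a genuine bijection between labellings $\vec k$ of $A$ with $R(\vec k) = \xi$ and tuples of labellings $(\vec k_1,\hdots,\vec k_N)$ of $(A_1,\hdots,A_N)$ subject to $\sum_j R(\vec k_j) = \xi$. One must check both that every labelling of $A$ arises uniquely from such a tuple and that the frequencies match, namely $R(\vec k) = \sum_j R(\vec k_j)$, which follows directly from the additivity of $R$ over the concatenation. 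No analytic difficulty arises; the only care needed is that all sums are finite, justifying the interchanges of summation and integration.
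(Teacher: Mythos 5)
Your proposal is correct and follows essentially the same route as the paper: induction on $n$, with the base case checked directly from the Fourier transform of $e^{tK}a_L$, and the inductive step obtained by writing $\widehat{J_L}$ in Fourier variables, inserting the induction hypothesis, and identifying the set $\{\vec k \mid R(\vec k_j)=\eta_j,\ \sum_j \eta_j = \xi\}$ with $\{\vec k \mid R(\vec k)=\xi\}$. The bookkeeping point you flag about the concatenation convention is exactly the one the paper relies on, and it holds as you describe.
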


\begin{remark} The sum is finite because $(a_{L,k})$ has finite support.
\end{remark}

\begin{proof} By induction on $n$. For $n=0$, we have 
\[
F_{L,A,\vec k} (t) = e^{it\omega (R(\vec k))} g_{k_1}a_{L,k_1} = \hat u_0 (t)(R(\vec k)).
\]
For $n=m+1$ with $m\in \N$ with the above construction. We have 
\[
F_{L,A}(t) = \int_{0}^t e^{(t-\tau)K}[J_L(F_{L,A_1}(\tau),\hdots,F_{L,A_N}(\tau))]d\tau.
\]
In Fourier mode, this transforms as
\[
\widehat{F_{L,A}(t)}(\xi) = \frac1{(2\pi L)^{d(N-1)/2}}\int_{0}^t e^{i(t-\tau)\omega(\xi)} \sum_{\xi_1+\hdots \xi_N = \xi} \Psi(\xi_1,\hdots ,\xi_N) ( \widehat{F_{L,A_1}(\tau)}(\xi_1),\hdots, \widehat{F_{L,A_N}(\tau)}(\xi_N)) d\tau.
\]
We use the induction hypothesis to get that
\[
\widehat{F_{L,A_j}(\tau)}(\xi_j) = \sum_{R(\vec k_j) = \xi_j} F_{L,A,\vec k_j}(\tau).
\]
We see now that
\[
\{ \vec k \; | \; R(\vec k_j) = \xi_j \wedge \sum \xi_j= \xi \} = \{ \vec k \;| \; R(\vec k) = \xi\}.
\]
We deduce the result. 
\end{proof}

\begin{proposition}\label{prop:PicardExpFourier2} We have that for all $A\in \mathcal A_n$ and all $\vec k \in (\Z^d_*)^{(N-1)n+1}$,
\[
F_{L,A,\vec k} (t) = \frac1{(2\pi L)^{d(N-1)n/2}} G_{L,A,\vec k}(t) \prod_{j=1}^{(N-1)n+1} g_{k_j}
\]
where $G_{L,(),(k_1)}(t) = e^{i\omega(k_1/L)t}a_{L,k_1}$ and with the notations of Proposition \ref{prop:PicardExpFourier}
\[
G_{L,A,\vec k}(t) = \int_{0}^t e^{i(t-\tau)\omega(R(\vec k))} \Psi(R(\vec k_1),\hdots,R(\vec k_N))( G_{L,A_1,\vec k_j}(\tau), \hdots, G_{L,A_N,\vec k_N}(\tau)) d\tau.
\]
\end{proposition}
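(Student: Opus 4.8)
The plan is to argue by induction on the number of nodes $n$, exploiting the fact that Definition \ref{def:labelling} builds $F_{L,A,\vec k}$ by exactly the same recursion that defines $G_{L,A,\vec k}$ in the statement, with the only randomness entering through the scalars $g_{k_j}$. Since each $g_{k_j}$ is a complex number and $\Psi$ is $N$-linear, these scalars---together with the normalizing powers of $(2\pi L)$---can be factored out freely through $\Psi$ and through the time integral. The claimed formula is then precisely the assertion that all of these scalar factors collect into the single product $\prod_j g_{k_j}$ and a single power $(2\pi L)^{-d(N-1)n/2}$, leaving behind the deterministic $G_{L,A,\vec k}$.

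For the base case $n=0$ one has $A=()$, $\vec k=(k_1)$, so that $(N-1)n+1=1$ and $(N-1)n=0$; comparing $F_{L,(),(k_1)}(t)=e^{it\omega(k_1/L)}g_{k_1}a_{L,k_1}$ with $G_{L,(),(k_1)}(t)=e^{i\omega(k_1/L)t}a_{L,k_1}$ gives the identity at once. For the inductive step, write $n=m+1$, $A=(A_1,\dots,A_N)$ with $A_j\in\mathcal A_{n_j}$ and $\sum_j n_j=m$. I would apply the induction hypothesis to each $A_j$ (which has $n_j<n$ nodes), substitute into the recursive formula of Definition \ref{def:labelling}, and use the $N$-linearity of $\Psi$ to pull every scalar $g$ and every power of $(2\pi L)$ out of the argument of $\Psi$ and out of the integral. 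Two elementary bookkeeping identities then finish the argument. First, the powers of $(2\pi L)$ combine as
\[
\frac{1}{(2\pi L)^{d(N-1)/2}}\prod_{j=1}^N \frac{1}{(2\pi L)^{d(N-1)n_j/2}}=\frac{1}{(2\pi L)^{d(N-1)(1+m)/2}}=\frac{1}{(2\pi L)^{d(N-1)n/2}},
\]
using $\sum_j n_j=m$ and $1+m=n$. Second, since the blocks $\vec k_1,\dots,\vec k_N$ partition the entries of $\vec k$ and each $\vec k_j$ carries $(N-1)n_j+1$ indices, the Gaussians recombine as $\prod_{j=1}^N\prod_l g_{(\vec k_j)_l}=\prod_{j=1}^{(N-1)n+1}g_{k_j}$, the total count being $\sum_j\big((N-1)n_j+1\big)=(N-1)m+N=(N-1)n+1$. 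What remains inside the integral is exactly $\Psi(R(\vec k_1),\dots,R(\vec k_N))(G_{L,A_1,\vec k_1}(\tau),\dots,G_{L,A_N,\vec k_N}(\tau))$, which by the recursive definition of $G$ integrates to $G_{L,A,\vec k}(t)$.

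There is no genuine analytic difficulty here: the entire content is notational bookkeeping, and the two identities above are where all the care is needed (one must match the accumulated exponent of $(2\pi L)$ and verify that each index $k_j$ appears in exactly one sub-block). The only conceptual point worth flagging is the legitimacy of extracting the $g_{k_j}$ through the $N$-linear map $\Psi$ and through integration in $\tau$; this is valid precisely because the $g_{k_j}$ are (random) complex scalars rather than vectors, so for each fixed realization they act as constants that commute with $\Psi$ and with the integral.
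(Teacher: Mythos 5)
Your proof is correct and follows exactly the route the paper intends: the paper's own proof is the single line ``By induction on $n$,'' and your argument is precisely that induction carried out in full, with the two bookkeeping identities (the exponent of $(2\pi L)$ and the count $\sum_j((N-1)n_j+1)=(N-1)n+1$ of the Gaussian factors) correctly verified. Nothing further is needed.
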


\begin{proof} By induction on $n$. 
\end{proof}

Summing up, we have the following formula :

\begin{equation}\label{sumupPicardExp}
\hat u_n(\xi) = \frac1{(2\pi L)^{d(N-1)n/2}}\sum_{A\in \mathcal{A}_n} \sum_{R(\vec k) = \xi} G_{A,\vec k}(t) \prod_{j=1}^{(N-1)n+1} g_{k_j}.
\end{equation}

\subsection{Expectations}

For the rest of this section, we set $R\in \N^*$, $(n_1,\hdots,n_R) \in \N^R$, $i_1,\hdots, i_R \in [|1,D|]^R$ and $(\xi_1,\hdots,\xi_R) \in (\frac1{L} \Z^d_*)^R$. We also set
\[
S = \{(l,j) \; | \; l\in [1,R]\cap \N, j \in [|1,n_l(N-1)+1|] \}
\]
and
\[
\mathfrak S 
\]
the set of involutions of $S$ without fixed points. 

Using Equation \eqref{sumupPicardExp}, we get
\[
\E \Big( \prod_{l=1}^R \hat u^{(i_l)}_{n_l}(t)(\xi_l) \Big) = \frac1{(2\pi L)^{d(N-1)(\sum n_l)/2}} \sum_{A_l \in \mathcal A_{n_l}} \sum_{R(\vec k_l) = \xi_l} \prod_{l=1}^R G^{(i_l)}_{A_l,\vec k_l} (t) \E(\prod_{m\in S} g_{k_m}).
\]

By the formula of cumulants, we have 
\[
\E(\prod_{m\in S} g_{k_m}) = \sum_{\sigma \in \mathfrak S} \prod_{m\in S_\sigma} \E(g_{k_m}g_{k_{\sigma(m)}})
\]
where $S_\sigma = \{ m \in S\;|\; m<\sigma(m) \}$ (using the lexicographical order). We get the following proposition.

\begin{proposition}\label{prop:expectations1} We have that
\[
\E \Big( \prod_{l=1}^R \hat u^{(i_l)}_{n_l}(t)(\xi_l) \Big) = \frac1{(2\pi L)^{d(N-1)(\sum n_l)/2}} \sum_{A_l \in \mathcal A_{n_l}} \sum_{\sigma \in \mathfrak{S}} \sum_{\Sigma_\sigma} \prod_{l=1}^R G^{(i_l)}_{L,A_l,\vec k_l} (t)
\]
where 
\[
\Sigma_\sigma = \{ \vec k \in (\Z^d)^S | \forall l\in [|1,R|], \;R(\vec k_l) = \xi_l,  \; \forall m\in S, k_{m} = -k_{\sigma(m)},
\]
and where
\[
G^{(i_l)}_{L,A_l,\vec k_l} (t) := 0
\]
whenever there exists $j\in [|1,n_l(N-1)+1|]$ such that $k_{l,j} = 0$,
and where we used the notation
\[
\vec k_l = (k_{l,1},\hdots,k_{l,(N-1)n_l+1}).
\]
\end{proposition}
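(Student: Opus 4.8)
The plan is to substitute the Wick (cumulant) expansion of $\E(\prod_{m\in S} g_{k_m})$ into the first displayed identity of this subsection and then read off the surviving terms. Concretely, starting from \eqref{sumupPicardExp}, I would expand the product over $l$ and, using that the maps $G^{(i_l)}_{L,A_l,\vec k_l}(t)$ are deterministic, pull them out of the expectation; this leaves exactly the Gaussian moment $\E(\prod_{m\in S} g_{k_m})$ to be evaluated, as already recorded before the statement. The formula of cumulants for centred Gaussians then rewrites this moment as $\sum_{\sigma\in\mathfrak S}\prod_{m\in S_\sigma}\E(g_{k_m}g_{k_{\sigma(m)}})$, where $\mathfrak S$ runs over the fixed-point-free involutions of $S$ and $S_\sigma=\{m\mid m<\sigma(m)\}$ selects one representative per pair in the lexicographical order.

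The key step is to evaluate the pairwise covariances $\E(g_{k_m}g_{k_{\sigma(m)}})$ in terms of the indices. Here I would use the three defining properties of the family $(g_k)$: independence of $g_k,g_l$ whenever $l\neq \pm k$, the reality constraint $g_k=\bar g_{-k}$, and the normalization $\E|g_k|^2=1$, together with circular symmetry of a centred complex Gaussian, which gives $\E(g_k^2)=0$ for $k\in\Z^d_*$ (so that the "diagonal" pairing $k_m=k_{\sigma(m)}$ does not contribute). Combining these, $\E(g_{k_m}g_{k_{\sigma(m)}})=\mathbf 1[k_m=-k_{\sigma(m)}]$, i.e. each factor is a Kronecker delta enforcing $k_m=-k_{\sigma(m)}$.

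It then remains to reorganize the sums. Since $\sigma$ is an involution, the constraints $k_m=-k_{\sigma(m)}$ imposed for $m\in S_\sigma$ are equivalent to the same constraints for all $m\in S$; intersecting them with the mode conditions $R(\vec k_l)=\xi_l$ produces precisely the index set $\Sigma_\sigma$. Exchanging the (finite) sums over $\sigma\in\mathfrak S$, over the trees $A_l\in\mathcal A_{n_l}$, and over $\vec k$ then yields the claimed expression. The only bookkeeping point is the passage from the sums over $\Z^d_*$ implicit in \eqref{sumupPicardExp} to sums over $(\Z^d)^S$ in the definition of $\Sigma_\sigma$: this is harmless once one adopts the stated convention $G^{(i_l)}_{L,A_l,\vec k_l}(t):=0$ as soon as some component $k_{l,j}$ vanishes, since $a_{L,0}$ is absent and such terms carry no weight.

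I expect the only genuinely delicate point to be the covariance computation, specifically correctly accounting for the reality constraint $g_k=\bar g_{-k}$ and ruling out the $k_m=k_{\sigma(m)}$ contribution via circular symmetry; the remaining manipulations are finite relabelings and reorderings of absolutely convergent (indeed finite) sums.
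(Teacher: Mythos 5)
Your proposal is correct and follows essentially the same route as the paper: substitute the Wick/cumulant expansion into the expression obtained from \eqref{sumupPicardExp}, reduce everything to the pair covariances, and observe that $\E(g_{k_m}g_{k_{\sigma(m)}})$ equals $1$ if $k_m=-k_{\sigma(m)}$ and $0$ otherwise (the paper's entire proof is this last observation, the preceding reductions having been carried out in the text just before the statement). Your extra care about ruling out the $k_m=k_{\sigma(m)}$ pairing via circular symmetry and about the $\Z^d_*$ versus $\Z^d$ bookkeeping is consistent with the paper's conventions and does not change the argument.
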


\begin{proof} We have 
\[
\E(g_{k_m}g_{k_{\sigma(m)}}) = \left \lbrace{\begin{array}{cc}
1 & \textrm{ if } k_m = - k_{\sigma(m)} \\
0 & \textrm{ otherwise.}
\end{array}} \right.
\]
\end{proof}

\begin{remark} If the cardinal of $S$, that is, $\sum_l n_l (N-1) + R$ is odd, then the expectation is $0$.
\end{remark}

We also set for $l\in [|1,R|]$, and $j\in [|1,n_l(N-1)+1|]$, 
\[
\sigma(l,j) = (\tilde \sigma(l,j),j')
\]
for some $j' \in [|1,n_{l'}(N-1)+1|]$.

We now compute the dimension of $\Sigma_\sigma$. For this, we introduce the notion of orbits of $\sigma$.

\begin{definition}\label{def:orbits} Let $A\subset [|1,R|]$. We set
\[
\sigma(A) = \{ l \in [|1,R|]\cap \N \; |\; \exists l'\in A,\exists j' \in [|1,n_{l'}(N-1)+1|],  l = \tilde\sigma(l',j')\}.
\]
This defines a map of the parts of $[|1,R|]$ to itself.

We call the orbit of $l$ in $\sigma$ and we write $o_\sigma(l)$ the set
\[
o_\sigma(l) = \bigcup_{n\in \N} \sigma^n(\{l\}).
\]
We write $\mathcal O_\sigma$ the set whose elements are the orbits of $\sigma$.
\end{definition}

\begin{proposition}\label{prop:partition}
The orbits of $\sigma$ form a partition of $[|1,R|]$. 
\end{proposition}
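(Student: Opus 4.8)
The plan is to show that the binary relation $l \approx l'$ defined by $l' \in o_\sigma(l)$ is an equivalence relation on $[|1,R|]$; since the orbits $o_\sigma(l)$ are exactly the classes of $\approx$ (the class of $l$ is $\{l' : l' \in o_\sigma(l)\} = o_\sigma(l)$), and the classes of any equivalence relation partition the underlying set, this yields the claim. Three properties must be checked, and the only one requiring genuine input is symmetry. First, reflexivity and covering are immediate: taking $n=0$ in the union $o_\sigma(l) = \bigcup_n \sigma^n(\{l\})$ (with the convention $\sigma^0 = \mathrm{id}$) gives $l \in o_\sigma(l)$, so every orbit is nonempty, the orbits cover $[|1,R|]$, and $\approx$ is reflexive.

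Second — and this is the heart of the argument — I would establish that the one-step relation is symmetric: $l' \in \sigma(\{l\})$ if and only if $l \in \sigma(\{l'\})$. By Definition \ref{def:orbits}, $l' \in \sigma(\{l\})$ means there is $j \in [|1, n_l(N-1)+1|]$ with $\tilde\sigma(l,j) = l'$, that is, $\sigma(l,j) = (l',j')$ for some $j'$. Here I use that $\sigma \in \mathfrak{S}$ is an involution without fixed points: applying $\sigma$ to the equality $\sigma(l,j) = (l',j')$ gives $\sigma(l',j') = (l,j)$, hence $\tilde\sigma(l',j') = l$, which is exactly $l \in \sigma(\{l'\})$. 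The converse is identical.

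I expect this single-step symmetry to be the main (though short) obstacle, essentially because $\sigma$ induces only a \emph{multivalued} map on $[|1,R|]$ — each index $l$ may relate to several indices through different values of $j$ — so the familiar "orbits of a permutation" picture does not apply verbatim, and symmetry must be extracted from the involution property on $S$ rather than read off a bijection of $[|1,R|]$.

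Third, symmetry and transitivity of $\approx$ then follow formally. For symmetry, if $l' \in o_\sigma(l)$ then $l' \in \sigma^n(\{l\})$ for some $n$, i.e. there is a chain $l = l_0, l_1, \ldots, l_n = l'$ with $l_{i+1} \in \sigma(\{l_i\})$ for each $i$; reversing this chain using the one-step symmetry just proved gives $l \in \sigma^n(\{l'\}) \subseteq o_\sigma(l')$. For transitivity, I would first note that $A \mapsto \sigma(A)$ is monotone for inclusion, so that $l' \in \sigma^n(\{l\})$ forces $\sigma^m(\{l'\}) \subseteq \sigma^m(\sigma^n(\{l\})) = \sigma^{m+n}(\{l\})$; hence $l'' \in o_\sigma(l')$ together with $l' \in o_\sigma(l)$ give $l'' \in o_\sigma(l)$. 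Having verified that $\approx$ is reflexive, symmetric and transitive, its equivalence classes — which are precisely the orbits $o_\sigma(l)$ — form a partition of $[|1,R|]$, which is the assertion.
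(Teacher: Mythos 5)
Your proposal is correct and follows essentially the same route as the paper: both show that the relation induced by the orbits is an equivalence relation, with reflexivity from $\sigma^0$, symmetry extracted from the fact that $\sigma$ is an involution on $S$ (reversing the chain $l=l_0,\ldots,l_n=l'$), and transitivity from composing powers of the set map $A\mapsto\sigma(A)$. Your explicit isolation of the one-step symmetry is a slightly cleaner organization of the same argument.
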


\begin{proof} We prove that the relation $l\in o_\sigma(l')$ is an equivalence relation.  

This relation is reflexive since $l \in \sigma^{0}(\{l\})$ for all $l$.

This relation is symmetric. Indeed, let $l,l' \in [|1,R|]$. We prove that $l \in o(l')$ implies $l' \in o(l)$. Since $l \in o(l')$, there exists $n$ such that $l \in \sigma^n(\{l'\})$. 
Therefore, there exists $j_1, \hdots ,j_n$, $k_0,\hdots,k_{n-1}$ and $l'=l_0,l_1,\hdots, l_{n-1},l_n=l$ such that for all $m=0,\hdots,n-1$,
\[
(l_{m+1},j_{m+1}) = \sigma(l_m,k_m).
\]
Because $\sigma$ is an involution, this also reads as
\[
(l_m,k_m) = \sigma(l_{m+1},j_{m+1})
\]
and thus $l' \in o(l)$.

This relation is transitive. Indeed, if $l \in o_\sigma(l')$ and if $l'\in o_\sigma(l'')$ then there exist $n_1$ and $n_2 \in \N$, such that
\[
l \in \sigma^{n_1}(\{l'\}) \quad l' \in \sigma^{n_2}(\{l''\}).
\] 
Therefore, we have
\[
l \in  \sigma^{n_1+n_2}(\{l''\}) \subseteq o_{\sigma}(l'').
\]
\end{proof}

\begin{proposition}\label{prop:dimSigma} If for all $o \in \mathcal O_\sigma$, we have 
\[
\sum_{l\in o} \xi_l = 0
\]
then
\[
\Sigma_\sigma \sim (Z^d)^{s_\sigma}
\]
with $s_\sigma = \frac12 \# S + \# \mathcal O_\sigma - R$.

Otherwise, $\Sigma_\sigma = \emptyset$.
\end{proposition}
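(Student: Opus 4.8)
The plan is to recast the defining conditions of $\Sigma_\sigma$ as an affine system of linear equations over $\mathbb{Z}^d$ and to read off its dimension from the combinatorics of the orbits. Since both families of constraints, the pairing $k_m = -k_{\sigma(m)}$ and the resonance $R(\vec k_l)=\xi_l$, act coordinatewise on $\mathbb{Z}^d$, I would first reduce to the scalar situation over $\mathbb{Z}$ and recover $(\mathbb{Z}^d)^{s_\sigma}$ at the end as $d$ independent copies. The first concrete step is to use the pairing to eliminate half of the unknowns: choosing one representative per pair $\{m,\sigma(m)\}$ leaves exactly $\tfrac12\#S$ free variables $x_p$, one for each of the $\tfrac12\#S$ pairs, with the partner fixed to $-x_p$.

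Next I would rewrite the $R$ resonance constraints $\sum_j k_{l,j}=L\xi_l$ in terms of the $x_p$. A pair with both endpoints in the same block $l$ contributes $x_p+(-x_p)=0$ to that block's sum, hence a zero column; a pair joining two distinct blocks $l\neq l'$ contributes $+x_p$ to block $l$ and $-x_p$ to block $l'$. Thus the coefficient matrix is precisely the oriented incidence matrix of the multigraph $G$ whose vertex set is $[|1,R|]$ and whose edges are the crossing pairs. By the definition of $\tilde\sigma$, two blocks are adjacent in $G$ exactly when they lie in a common orbit, so by Proposition \ref{prop:partition} the connected components of $G$ are the orbits $\mathcal O_\sigma$.

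The decisive step separates emptiness from dimension. Summing the constraint $\sum_j k_{l,j}=L\xi_l$ over all $l$ in one orbit $o$, the left-hand side vanishes identically: every index $(l,j)$ with $l\in o$ is paired with an index lying in a block of $o$ (orbits are $\sigma$-invariant), so the contributions cancel in pairs; the orbit-sum therefore forces $L\sum_{l\in o}\xi_l=0$. This gives $\Sigma_\sigma=\emptyset$ whenever some orbit violates $\sum_{l\in o}\xi_l=0$. When all orbit-sums vanish, I would invoke the standard facts that the oriented incidence matrix of a multigraph on $R$ vertices with $\#\mathcal O_\sigma$ components has rank $R-\#\mathcal O_\sigma$ and is totally unimodular. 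Total unimodularity yields simultaneously that the system is solvable over $\mathbb{Z}$ (the per-component zero-sum condition is exactly the solvability obstruction) and that its kernel is a saturated, hence free, sublattice. Counting, the internal pairs contribute free variables and the crossing pairs contribute the incidence nullity, so $\Sigma_\sigma$ is a coset of a free $\mathbb{Z}$-module of rank $\tfrac12\#S-(R-\#\mathcal O_\sigma)=s_\sigma$; restoring the $d$ coordinates gives $\Sigma_\sigma\sim(\mathbb{Z}^d)^{s_\sigma}$.

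The main obstacle I anticipate is doing the bookkeeping over $\mathbb{Z}$ rather than $\mathbb{Q}$: one must verify not only that the rank equals $R-\#\mathcal O_\sigma$, but that the integer solution set is genuinely free of the claimed rank and nonempty exactly under the orbit-sum condition. Total unimodularity of the incidence matrix, equivalently the spanning-tree description of its image and kernel, is what makes this clean; the delicate points are the consistent choice of signs, the treatment of internal pairs as zero columns, and the accounting of blocks with no crossing pair as singleton orbits (isolated vertices of $G$).
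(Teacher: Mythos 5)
Your proof is correct and follows essentially the same route as the paper's: eliminate half the variables via the pairing $k_m=-k_{\sigma(m)}$, observe that the resonance constraints decompose over the orbits, obtain the necessary condition $\sum_{l\in o}\xi_l=0$ by summing the equations within an orbit, and show that exactly $R-\#\mathcal O_\sigma$ of the $R$ equations are independent. Your incidence-matrix and total-unimodularity packaging is just a way of citing what the paper proves by hand (namely that the only linear relation among the equations attached to a single orbit is the one with constant coefficients), with the minor added benefit of making the integrality of the parametrization explicit.
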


\begin{remark} By $\Sigma_\sigma \sim (Z^d)^{s_\sigma}$, we mean that within the $\# S$ parameters of the elements of $\Sigma_\sigma$, $s_\sigma$ of them are free and $\#S - s_\sigma$ are fixed by the values of the $s_\sigma$ free parameters. More precisely, we mean that up to a reordering of the parameters in $\Sigma_\sigma$,
\[
\Sigma_\sigma = \{ (\xi_1,\hdots,\xi_{s_\sigma}, L_{s_\sigma + 1}(\xi_1,\hdots,\xi_{s_\sigma}),\hdots, L_{\# S}(\xi_1,\hdots,\xi_{s_\sigma}))\; |\; (\xi_1,\hdots,\xi_{s_\sigma})\in \Z^d)^{s_\sigma}
\}
\]
where $L_{s_\sigma +1},\hdots, L_{\# S}$ are linear maps. 
\end{remark}

\begin{proof} For all $l\in [|1,R|] $, set
\[
S_{\sigma,l,+} = \{j \in [1,n_l(N-1)+1]\; |\; l <  \tilde \sigma(l,j)\}
\]
and 
\[
S_{\sigma,l,-} = \{j \in [1,n_l(N-1)+1]\; |\; l > \tilde \sigma(l,j)\}.
\]
Note that $S_{\sigma,l,+}\subseteq S_\sigma$ and that $S_{\sigma, l,-}$ is included in the complementary of $S_\sigma$ in $S$ and that $\sigma(S_{\sigma,l,-})\subseteq S_\sigma$.

By definition, we have 
\[
\Sigma_\sigma = \{ \vec k \in (\Z^d_*)^S | \forall l\in [1,R]\cap \N, \;R(\vec k_l) = \xi_l,  \; \forall m\in S, k_{m} = -k_{\sigma(m)}\}.
\]
By taking only half the $k$s (the ones in $S_\sigma$, the others being entirely determined by the ones in $S_\sigma$), we get
\[
\Sigma_\sigma \sim \Sigma_\sigma' = \{ \vec k \in (\Z^d_*)^{S_\sigma} \;|\; \forall l\in [1,R]\cap \N, \; \sum_{j\in S_{\sigma,l,+}} k_{(l,j)} - \sum_{j \in S_{\sigma,l,-}} k_{\sigma(l,j)} = L \xi_l\}.
\]
Because the orbits of $\sigma$ form a partition of $[1,R]\cap \N$ and because equations 
\[
\sum_{j\in S_{\sigma,l,+}} k_{(l,j)} - \sum_{j \in S_{\sigma,l,-}} k_{\sigma(l,j)} = L \xi_l
\]
involve only $l$s from the same orbit. Indeed, we have that $\tilde \sigma(l,j) \in o_\sigma(l)$. We have the decomposition
\[
\Sigma_\sigma' \sim \prod_{o\in \mathcal O_\sigma} \Sigma_{\sigma,o}
\]
with
\[
\Sigma_{\sigma,o} = \{ \vec k \in (\Z^d_*)^{S_{\sigma,o}} \;|\; \forall l\in o, \; \sum_{j\in S_{\sigma,l,+}} k_{(l,j)} - \sum_{j \in S_{\sigma,l,-}} k_{\sigma(l,j)} = L \xi_l\}
\]
where 
\[
S_{\sigma,o} = \{ (l,j) \in S\; | \; l\in o \wedge (l,j)<\sigma(l,j)\}
\]
where we used the lexicographical order. We have 
\[
\sum_{l\in o} \Big( \sum_{j\in S_{\sigma,l,+}} k_{(l,j)} - \sum_{j \in S_{\sigma,l,-}} k_{\sigma(l,j)}\Big) = \sum_{l\in o, j\in S_{\sigma,l,+}} k_{l,j} - \sum_{l\in o, j\in S_{\sigma,l,-}} k_{\sigma(l,j)}.
\]
Let $l\in o$ and $j\in S_{\sigma, l, +}$. By definition, $\sigma(l,j) = (l',j')$ with $l'>l$. By definition of the orbits, we also have $l'\in o$. Because $\sigma $ is an involution, we have 
\[
(l,j) = \sigma(l',j').
\]
Finally, by definition of $S_{\sigma,l',-}$, we have $j'\in S_{\sigma,l',-}$. In other words, there exists (a unique) couple $(l',j')$ such that $l'\in o$ and $j' \in S_{\sigma,l',-}$ such that
\[
(l,j) = \sigma(l',j').
\]
Conversely, if $(l',j')$ is such that $l'\in o$, $j'\in S_{\sigma,l,-}$ then $(l,j) := \sigma(l',j')$ is such that $l\in o$ and $j\in S_{\sigma,l,+}$. Therefore,
\[
\bigsqcup_{l\in o} S_{\sigma,l,+} = \sigma\Big(\bigsqcup_{l\in o} S_{\sigma,l,-}\Big).
\]

We deduce
\[
\sum_{l\in o} \Big( \sum_{j\in S_{\sigma,l,+}} k_{(l,j)} - \sum_{j \in S_{\sigma,l,-}} k_{\sigma(l,j)}\Big) = 0
\]
and thus $\Sigma_{\sigma,o} \neq \emptyset$ implies
\[
\sum_{l\in o} \xi_l = 0.
\]

Assume now that $\sum_{l\in o} \xi_l = 0$. We write $(E_l)$ the equation 
\[
\sum_{j\in S_{\sigma,l,+}} k_{(l,j)} - \sum_{j \in S_{\sigma,l,-}} k_{\sigma(l,j)} = L \xi_l .
\]
We know that these equations are not independent since 
\[
\sum_{l\in o} (E_l) = 0.
\]
We prove now that at least $\# o -1$ of them are independent. We argue by contradiction. By contradiction, we assume that there exists $(\alpha_l)_{l\in o}$ such that the sequence is not constant and 
\[
\sum_{l\in o} \alpha_l (E_l) = 0.
\]
This would imply that $\sum_{l} \alpha_l \xi_l = 0$ and 
\[
\sum_{l\in o} \sum_{j\in S_{\sigma,j,+}} \alpha_l k_{l,j} - \sum_{l\in o}\sum_{j\in S_{\sigma,l,-}} \alpha_{l} k_{\sigma(l,j)} = 0
\]
for all $\vec k \in (\Z^d_*)^{S_{\sigma,o}}$. This may be rewritten as
\[
\sum_{l\in o}\sum_{j\in S_{\sigma,j,+}} k_{l,j} (\alpha_l - \alpha_{\tilde\sigma(l,j)})=0
\]
for all $\vec k \in (\Z^d)^{S_{\sigma,o}}$.

We deduce that if there exists $j$ such that $\tilde \sigma(l,j) = l'$, then 
\[
\sum_l \alpha_l (E_l) = 0
\]
implies $\alpha_l = \alpha_{l'}$. In other words, for all $l'\in \sigma(\{l\})$, $\alpha_{l'} = \alpha_l$. By induction, we get $\alpha$ is constant on the whole orbit which yields a contradiction. We get indeed that at least $\# o - 1$ equations are independent. We deduce 
\[
\Sigma_{\sigma,o} \sim (\Z^d)^{\# S_{\sigma,o} - \# o +1} 
\]
and thus
\[
\Sigma_\sigma \sim (\Z^d)^{s_\sigma}
\]
with
\[
s_\sigma = \sum_o (\# S_{\sigma,o} - \#o + 1) = \# S_\sigma - R + \# \mathcal O_\sigma 
\]
hence the result.
\end{proof}

\subsection{Estimates}

We estimate the cardinal of $\mathcal A_n$ and $G_{L,A,\vec k}$ for a given $\vec k$. 

\begin{proposition}\label{prop:cardinalA} Let $n\in \N^*$, we have that
\[
\# \mathcal A_n \leq \left \lbrace{\begin{array}{cc}
4^{n-1} & \textrm{ if } N=1\\
(3eN)^{n-1} & \textrm{ otherwise.} \end{array}} \right.
\]
\end{proposition}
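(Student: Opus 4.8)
The plan is to reduce the statement to a bound on a single integer sequence. Writing $a_n = \#\mathcal A_n$, Definition \ref{def:Ntrees} gives $a_0 = 1$ and, for $n\ge 1$, the recursion $a_n = \sum_{n_1+\cdots+n_N = n-1}\prod_{j=1}^N a_{n_j}$, since an element of $\mathcal A_n$ is exactly a tuple $(A_1,\ldots,A_N)$ of trees whose numbers of nodes sum to $n-1$. Equivalently, for the generating function $f(x) = \sum_{n\ge 0} a_n x^n$ this is the functional equation $f = 1 + x f^N$. The case $N=1$ is immediate: the recursion becomes $a_n = a_{n-1}$, so $a_n = 1$ for every $n$, which is trivially bounded by $4^{n-1}$. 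Hence from now on I assume $N\ge 2$.

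First I would obtain a closed form for $a_n$. Setting $g = f-1$, the equation becomes $g = x(1+g)^N$, i.e. $g = x\phi(g)$ with $\phi(w) = (1+w)^N$, and Lagrange inversion yields, for $n\ge 1$,
\[
a_n = [x^n] g = \frac1n [w^{n-1}]\phi(w)^n = \frac1n[w^{n-1}](1+w)^{Nn} = \frac1n\binom{Nn}{n-1} = \frac{1}{(N-1)n+1}\binom{Nn}{n},
\]
the Fuss--Catalan number counting $N$-ary trees with $n$ internal nodes. (Alternatively one can quote this identity directly, or establish it by the cycle lemma.)

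Next I would estimate the binomial coefficient. Using the elementary inequality $\binom{m}{k}\le (em/k)^k$ with $m=Nn$, $k=n$, I get $\binom{Nn}{n}\le (eN)^n$, hence $a_n \le \frac{(eN)^n}{(N-1)n+1}$. It then remains to verify the purely elementary inequality $\frac{(eN)^n}{(N-1)n+1}\le (3eN)^{n-1}$, which rearranges to $eN \le 3^{n-1}\big((N-1)n+1\big)$. For $n\ge 2$ this holds because $3^{n-1}\ge 3$ and $(N-1)n+1 \ge 2N-1\ge N$, so the right-hand side is at least $3N > eN$. The value $n=1$ must be treated by hand: here $\mathcal A_1 = \{((),\ldots,())\}$ has a single element, so $a_1 = 1 = (3eN)^0$ and the bound holds with equality. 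Combining the two ranges gives $a_n \le (3eN)^{n-1}$ for all $n\ge 1$, which is the claim.

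The only genuinely delicate point is the identification of $a_n$ with the Fuss--Catalan number; everything after it is routine estimation together with the trivial separate check at $n=1$, whose necessity comes from the fact that the binomial bound overshoots the true value $a_1=1$ by a factor $e$. If one prefers to avoid Lagrange inversion, the same conclusion follows from the form $a_n = \frac1n\binom{Nn}{n-1}\le \binom{Nn}{n-1}\le (eNn/(n-1))^{n-1}\le (2eN)^{n-1}$ for $n\ge 2$, using $n/(n-1)\le 2$, which already gives the slightly stronger constant $2eN$; a Cauchy estimate $a_n\le f(\rho)\rho^{-n}$ at the radius of convergence $\rho_c = (N-1)^{N-1}/N^N$ also gives $a_n \le \frac{N}{N-1}\rho_c^{-n}$ with $\rho_c^{-1} < eN$, but that route then requires checking all $n \lesssim \log_3(Ne)$ by hand, which is less convenient.
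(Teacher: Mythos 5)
Your proof is correct, but it takes a genuinely different route from the paper's. The paper does not compute $\#\mathcal A_n$ exactly: it encodes trees as $0$--$1$ strings via Polish notation (the correspondence justified in its Appendix B), overcounts by the number of ways to place $n-1$ ones among $nN-1$ slots, i.e.\ $\binom{nN-1}{n-1}$, and then bounds this binomial coefficient by hand through explicit factorizations of the factorials into products over residues mod $N$ and mod $N-1$, together with $\ln(1+x)\le x$, arriving at $\bigl(eN\tfrac{2N-1}{N-1}\bigr)^{n-1}\le(3eN)^{n-1}$. You instead identify $\#\mathcal A_n$ exactly as the Fuss--Catalan number $\tfrac1{(N-1)n+1}\binom{Nn}{n}$ via the functional equation $f=1+xf^N$ and Lagrange inversion, and then apply the standard estimate $\binom{m}{k}\le(em/k)^k$; note that your exact count equals $\tfrac{N}{(N-1)n+1}\binom{nN-1}{n-1}$, so the two quantities being estimated differ only by a polynomial factor and the final constants coincide. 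Your route is shorter and yields the exact enumeration (and, as you observe, even the better constant $2eN$ for $n\ge2$), at the cost of invoking Lagrange inversion and of the separate check at $n=1$, which you correctly carry out; the paper's route is more elementary and self-contained but computationally heavier. One cosmetic remark: the statement's first branch reads ``$N=1$'' while the paper's proof establishes the $4^{n-1}$ bound for $N=2$ (for $N=1$ one trivially has $\#\mathcal A_n=1$, as you note); your argument covers the statement as literally written, and for $N=2$ the ``otherwise'' branch $(6e)^{n-1}$ that you prove is weaker than, hence consistent with, the paper's $4^{n-1}$.
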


\begin{proof} This is a classical computation that we detail here for the seek of completeness. 

Using Polish notation, the trees in $\mathcal A_n$ are sequences of $n(N-1)+1$ zeros and $n$ ones, knowing that the first character is a one and the last a zero. Therefore, it remains to place $n-1$ ones into $n(N-1)+ 1 +n -2 = nN-1$ slots. There are of course extra rules than the ones we mention but this leaves at most
\[
\begin{pmatrix}
nN-1 \\ n-1
\end{pmatrix}
\]
possibilities and thus
\[
\# \mathcal A_n \leq \frac{(nN-1)!}{(n-1)!(n(N-1))!}.
\]

We start with $N=2$. In this case, we have 
\[
\# \mathcal A_n \leq \frac{(2n-1)!}{n! (n-1)!} = \prod_{k=1}^{n-1}\frac{2k}{k} \prod_{k=2}^{n}\frac{2k-1}{k}
\]
which yields the result using that $2k-1 \leq 2k$. 

For general $N$, we have 
\[
(nN-1)! = \prod_{j=1}^{n-1} Nj  \prod_{k=1}^{N-1} \prod_{j=0}^{n-1} (Nj+k).
\]
We deduce that
\[
\frac{(nN-1)!}{(n-1)!} = N^{n-1} \prod_{k=1}^{N-1} \prod_{j=0}^{n-1} (Nj+k).
\]

We also have 
\[
(n(N-1))! = n(N-1) \prod_{j=1}^{n-1} (N-1)j \prod_{k=1}^{N-2} \prod_{j=0}^{n-1} ((N-1)j+k).
\]
We deduce 
\[
\# \mathcal A_n \leq N^{n-1}\Big( \prod_{k=1}^{N-2}\prod_{j=0}^{n-1} \frac{Nj+k}{(N-1)j+k}\Big)\Big( \prod_{j=1}^{n-1} \frac{Nj+N-1}{(N-1)j}\Big) \frac{N-1}{n(N-1)}.
\]
Let 
\[
I = \prod_{j=1}^{n-1} \frac{Nj+N-1}{(N-1)j} .
\]
We have for all $j\in [1,n-1]$,
\[
\frac{Nj+N-1}{(N-1)j} = \frac{N}{N-1} + \frac1{j} \leq \frac{2N-1}{N-1}.
\]
We deduce
\[
I\leq \Big( \frac{2N-1}{N-1}\Big)^{n-1}.
\]
We have of course
\[
\frac{N-1}{n(N-1)} = \frac1{n}.
\]
We set
\[
II = \prod_{k=1}^{N-2}\prod_{j=0}^{n-1} \frac{Nj+k}{(N-1)j+k}.
\]
We have 
\[
\ln II = \sum_{k=1}^{N-2} \sum_{j=0}^{n-1} \ln \Big( 1+ \frac{j}{(N-1)j+k}\Big).
\]
Because $\ln(1+x) \leq x$ for all $x\geq 0$, we have 
\[
\ln II \leq \sum_{j=1}^{n-1} j \sum_{k=1}^{N-2} \frac1{(N-1)j+k}.
\]
We have 
\[
\sum_{k=1}^{N-2} \frac1{(N-1)j+k} \leq \int_{(N-1)j}^{(N-1)j+N-2} \frac{dx}{x} = \ln\Big( \frac{(N-1)j + N-2}{(N-1)j}\Big) = \ln (1+ \frac{N-2}{(N-1)j}) \leq \frac1{j}.
\]
We deduce
\[
\ln II \leq (n-1)
\]
and thus
\[
II \leq e^{n-1}.
\]
Summing up we get
\[
\# \mathcal A_n \leq \Big( eN\frac{2N-1}{N-1}\Big)^{n-1}.
\]
Roughly, we get
\[
\#\mathcal A_n \leq (3eN)^{n-1}.
\]
\end{proof}

\begin{proposition}\label{prop:estimateG} There exists $C = C(N,\Psi)$ such that for all $n_in \N$, for all $\vec k \in (\Z^d_*)^{(N-1)n+1}$ and for all $A \in \mathcal A_n$, for all $t\in \R_+$, we have 
\[
|G_{L,A,\vec k}(t)| \leq C^n t^n \max_{l=1}^{(N-1)n + 1} \an{\frac{k_l}{L}}^{rn} \prod_{j=1}^{(N-1)n+1} |a_{L,k_j}|.
\]
\end{proposition}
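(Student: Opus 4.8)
The plan is to argue by induction on the number of nodes $n$, working directly from the recursive definition of $G_{L,A,\vec k}$ in Proposition \ref{prop:PicardExpFourier2}; the induction is legitimate because each subtree $A_j$ has $n_j\le n-1<n$ nodes. For the base case $n=0$ we have $A=()$ and $G_{L,(),(k_1)}(t)=e^{i\omega(k_1/L)t}a_{L,k_1}$. Since $K$ is skew-symmetric the symbol $\omega$ is real-valued, so the exponential is unimodular and $|G_{L,(),(k_1)}(t)|=|a_{L,k_1}|$, which is exactly the asserted bound (the empty products and $C^0t^0$ all equal $1$). Throughout I would abbreviate $M=\max_l\an{k_l/L}$, the maximum over all $(N-1)n+1$ components of $\vec k$, noting that $M\ge 1$.

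For the inductive step, write $A=(A_1,\dots,A_N)$ with $A_j\in\mathcal A_{n_j}$ and $\sum_j n_j=n-1$, and let $M_j$ be the maximum of $\an{k_{j,i}/L}$ over the block $\vec k_j$, so $M_j\le M$. Taking absolute values in the integral defining $G_{L,A,\vec k}$ and again discarding the unimodular factor $e^{i(t-\tau)\omega(R(\vec k))}$ gives
\[
|G_{L,A,\vec k}(t)|\le \int_0^t |\Psi(R(\vec k_1),\dots,R(\vec k_N))|\prod_{j=1}^N |G_{L,A_j,\vec k_j}(\tau)|\,d\tau.
\]
The linear-growth hypothesis on $\Psi$ bounds the first factor by $\max_j\an{R(\vec k_j)}^r$, and the induction hypothesis bounds the product by $C^{n-1}\tau^{n-1}\big(\prod_j M_j^{rn_j}\big)\prod_{j,i}|a_{L,k_{j,i}}|$. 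Since $\sum_j n_j=n-1$ and $M_j\le M$, the frequency powers combine to at most $M^{r(n-1)}$, while the $|a|$-factors reassemble into exactly $\prod_{j=1}^{(N-1)n+1}|a_{L,k_j}|$.

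The hard part is controlling the factor $\an{R(\vec k_j)}^r$ produced by $\Psi$, because $R(\vec k_j)=\tfrac1L\sum_i k_{j,i}$ is a sum of up to $(N-1)n+1$ frequencies and so can be far larger than any single $\an{k_l/L}$; the key observation is that only one extra power of $M^r$ is actually needed. By the triangle inequality and $M\ge 1$ one has $\an{R(\vec k_j)}\le \sqrt2\,((N-1)n+1)\,M$, hence $\an{R(\vec k_j)}^r\le \sqrt2\,((N-1)n+1)\,M^r$ using $r\le 1$. Collecting everything and integrating via $\int_0^t\tau^{n-1}\,d\tau=t^n/n$ gives
\[
|G_{L,A,\vec k}(t)|\le \frac{\sqrt2\,((N-1)n+1)}{n}\,C^{n-1}\,t^n\,M^{rn}\prod_{j=1}^{(N-1)n+1}|a_{L,k_j}|.
\]
The polynomial-in-$n$ prefactor is the only real obstacle, and it is absorbed by the gain $1/n$ from the time integration together with the freedom in $C$: since $\sqrt2\,((N-1)n+1)/n\le \sqrt2\,N$, any $C\ge \sqrt2\,N$ (enlarged by the implicit constant in the bound on $\Psi$) makes the prefactor at most $C$, closing the induction and yielding the stated constant $C=C(N,\Psi)$.
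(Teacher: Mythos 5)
Your proof is correct and follows essentially the same route as the paper: induction on the tree structure, using the unimodularity of $e^{i(t-\tau)\omega}$, the linear-growth bound on $\Psi$ to pay one extra power of $\max_l\an{k_l/L}^{r}$ per node, and the factor $1/n$ from the time integration to absorb the linear-in-$n$ prefactor coming from $\an{R(\vec k_j)}^{r}\lesssim nN\max_l\an{k_l/L}^{r}$. The only (immaterial) difference is that you bound $\an{R(\vec k_j)}$ via the triangle inequality on $|R(\vec k_j)|$, while the paper uses subadditivity of $\an{\cdot}^{r}$ over the components; both yield the same $O(nN)$ factor.
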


\begin{proof} We prove this by induction on the trees. If $A = ()$ then 
\[
|G_{L,(),(k_1)}(t)| = |a_{L,k_1}|.
\]
If $A \in \mathcal A_{n+1}$ with $A= (A_1,\hdots, A_N)$ and $A_j \in \mathcal A_{n_j}$, we have 
\[
|G_{L,A,\vec k}(t)| = \Big| \int_{0}^t e^{i(t-\tau)\omega(R(\vec k)} \Psi(R(\vec k_1),\hdots,R(\vec k_N)) ( G_{L,A_1,\vec k_1}(\tau), \hdots, G_{L,A_N,\vec k_N}(\tau))\Big|.
\]
We use the induction hypothesis to get that
\[
\Big|\prod_{j=1}^N G_{L,A_j,\vec k_j}(\tau)\Big| \leq C^n \tau^n \max_{l=1}^{(N-1)n + 1} \an{\frac{k_l}{L}}^{rn} \prod_{l=\tilde n_j+1}^{\tilde n_j + (N-1)n_j +1}|a_{L,k_l}|.
\]
We deduce
\[
|G_{L,A,\vec k}(t)|\leq C^n \max_{l=1}^{(N-1)n + 1} \an{\frac{k_l}{L}}^{rn} \frac{t^{n+1}}{n+1}|\Psi(R(\vec k_1),\hdots,R(\vec k_N)) | \prod_l |a_{L,k_l}|.
\]
We have that
\[
|\Psi(R(\vec k_1),\hdots,R(\vec k_N)) |\leq C' \max_{j=1}^N \an{R(\vec k_j)}^r.
\]
Since $r\leq 1$, we have 
\[
\an{R(\vec k_j)}^r \leq \sum_{l} \an{\frac{k_l}{L}}^r \leq (n_j(N-1)+1) \max_{l} \an{\frac{k_l}{L}}^r.
\]
Since $n\geq n_j$ for all $j$, we have 
\[
|G_{L,A,\vec k}(t)|\leq C^n N C'\max_{l=1}^{(N-1)n + 1} \an{\frac{k_l}{L}}^{r(n+1)} t^{n+1}\prod_l |a_{L,k_l}|.
\]
Taking $C= C' N$ we get the result.
\end{proof}

We now estimate
\[
F_{L,\sigma} (t) = \frac1{(2\pi L)^{d(N-1)(\sum n_l)/2}} \sum_{A_l \in \mathcal A_{n_l}}  \sum_{\Sigma_\sigma} \prod_{l=1}^R G_{L,A_l,\vec k_l} 
\]

Combining all we have done so far, we get the following proposition.

\begin{proposition} If for some $o \in \mathcal O_\sigma$, we have 
\[
\sum_{l\in o} \xi_l \neq 0
\]
then
\[
F_{L,\sigma}(t) = 0
\]
otherwise, we have the estimate, with $\bar C = C 3eN$ if $N>2$ and $\bar C = 4C$ if $N=2$,
\[
|F_{L,\sigma}(t)| \leq \frac{(\bar C A_L^r t)^{\sum n_l}}{(2\pi L)^{d(R/2 - \#\mathcal O_\sigma})} \|a_{L,k}\|_{\ell^\infty}^{\# S - 2s_\sigma} \|a_{L,k}\|_{\ell^2}^{2s_\sigma}. 
\]
where we recall that $A_L$ is defined as
\[
A_L := \sup \{ \an{\frac{k}{L}} \; |\; a_{L,k} \neq 0\}.
\]
\end{proposition}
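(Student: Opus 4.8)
The plan is to bound $F_{L,\sigma}$ term by term, feeding the three ingredients already assembled---the dimension count of $\Sigma_\sigma$, the pointwise bound on $G_{L,A,\vec k}$, and the cardinality of $\mathcal A_n$---into the three factors on the right-hand side. The vanishing case is immediate: by Proposition \ref{prop:dimSigma}, if $\sum_{l\in o}\xi_l\neq 0$ for some orbit $o\in\mathcal O_\sigma$, then $\Sigma_\sigma=\emptyset$, so the inner sum defining $F_{L,\sigma}(t)$ is empty and $F_{L,\sigma}(t)=0$.

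For the estimate, assume every orbit sum vanishes. First apply the triangle inequality, then insert Proposition \ref{prop:estimateG} for each factor $G_{L,A_l,\vec k_l}$. In a term that actually contributes, every $a_{L,k_{l,j}}$ is nonzero, so each entry satisfies $\an{k_{l,j}/L}\leq A_L$ by definition of $A_L$; hence the maxima in Proposition \ref{prop:estimateG} are all $\leq A_L^{rn_l}$, and the product over $l$ of the $G$-bounds is at most $(CA_L^r t)^{\sum n_l}\prod_{(l,j)\in S}|a_{L,k_{l,j}}|$. The sum over the trees $A_l\in\mathcal A_{n_l}$ only multiplies this by $\prod_l\#\mathcal A_{n_l}$, which Proposition \ref{prop:cardinalA} bounds by $(3eN)^{\sum n_l}$ when $N>2$ and by $4^{\sum n_l}$ when $N=2$ (using $\#\mathcal A_0=1$ and $3eN\geq 1$); this is precisely the factor that upgrades $C$ to $\bar C$.

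It then remains to estimate $\sum_{\Sigma_\sigma}\prod_{(l,j)\in S}|a_{L,k_{l,j}}|$. Since $\sigma$ is a fixed-point-free involution with $k_{\sigma(m)}=-k_m$ on $\Sigma_\sigma$, and since $a_{L,-k}=a_{L,k}$, the indices pair up and $\prod_{(l,j)\in S}|a_{L,k_{l,j}}|=\prod_{m\in S_\sigma}|a_{L,k_m}|^2$ with $\#S_\sigma=\frac12\#S$. I would then use the explicit parametrization of $\Sigma_\sigma$ from the remark following Proposition \ref{prop:dimSigma}: the $s_\sigma$ free parameters among the $k_m$, $m\in S_\sigma$, run independently over $\Z^d$, while the remaining $\#S_\sigma-s_\sigma$ are linear functions of them. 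Bounding $|a_{L,k_m}|^2\leq\|(a_{L,k})_k\|_{\ell^\infty}^2$ on the determined indices and summing $|a_{L,k_m}|^2$ over $\Z^d$ on each free index (each summation yielding $(2\pi L)^d\|(a_{L,k})_k\|_{\ell^2}^2$) gives
\[
\sum_{\Sigma_\sigma}\prod_{(l,j)\in S}|a_{L,k_{l,j}}|\leq \|(a_{L,k})_k\|_{\ell^\infty}^{\#S-2s_\sigma}\,(2\pi L)^{ds_\sigma}\,\|(a_{L,k})_k\|_{\ell^2}^{2s_\sigma}.
\]
Collecting the powers of $(2\pi L)^d$---namely $-\frac{(N-1)\sum n_l}{2}$ from the prefactor and $+s_\sigma$ from the bound above---and substituting $s_\sigma=\frac12\#S+\#\mathcal O_\sigma-R$ with $\#S=(N-1)\sum n_l+R$ collapses the exponent to exactly $-(R/2-\#\mathcal O_\sigma)$, which yields the claimed inequality.

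The step I expect to be the main obstacle is isolating which of the $k_m$ to treat as free: one must ensure that, after bounding the determined indices by the $\ell^\infty$ norm, the free indices genuinely range over all of $\Z^d$ so that the sum factorizes into $\ell^2$-norms. This is exactly what the remark after Proposition \ref{prop:dimSigma} guarantees, so the effort lies in invoking that structure cleanly rather than in any new estimate.
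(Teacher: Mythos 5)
Your proof is correct and follows exactly the route the paper intends (the paper itself only says ``combining all we have done so far''): vanishing via Proposition \ref{prop:dimSigma}, the factor $(CA_L^rt)^{\sum n_l}\prod|a_{L,k_{l,j}}|$ via Proposition \ref{prop:estimateG}, the upgrade $C\to\bar C$ via Proposition \ref{prop:cardinalA}, and the pairing $k_{\sigma(m)}=-k_m$ together with the $s_\sigma$ free parameters to produce the $\ell^2$/$\ell^\infty$ split and the power of $(2\pi L)^d$. Your final worry is not actually an obstacle: since all terms are nonnegative, letting the free indices range over all of $\Z^d$ only overestimates the sum, which is all an upper bound requires.
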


\begin{proposition}\label{prop:estExp} If it exists, set $\mathcal  O$ be a maximal partition of $[|1,R|]\cap \N$ such that for all $o \in \mathcal O$, 
\[
\sum_{l\in o} \xi_l = 0.
\]
Then,
\[
\E \Big( \prod_{l=1}^R \hat u_{n_l}^{(i_l)}(t)(\xi_l) \Big) \leq \# \mathfrak S \|a_{L,k}\|_{\ell^\infty \cap \ell^2}^{\# S} \frac{(\bar C A_L^r t)^{\sum n_l}}{(2\pi L)^{d(R/2 - \#\mathcal O)}}.
\]

If such a partition does not exist then 
\[
\E \Big( \prod_{l=1}^R \hat u_{n_l}^{(i_l)}(t)(\xi_l) \Big) = 0
\]
\end{proposition}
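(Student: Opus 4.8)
The plan is to start from the exact expansion of Proposition \ref{prop:expectations1} which, upon recognizing the quantity $F_{L,\sigma}$ introduced just above, reads
\[
\E \Big( \prod_{l=1}^R \hat u_{n_l}^{(i_l)}(t)(\xi_l) \Big) = \sum_{\sigma \in \mathfrak S} F_{L,\sigma}(t),
\]
and then to control the right-hand side term by term via the triangle inequality, invoking the preceding proposition (the one bounding $|F_{L,\sigma}(t)|$) for each summand. Since $\mathfrak S$ has exactly $\#\mathfrak S$ elements, the factor $\#\mathfrak S$ in the claim will appear simply as the number of summands, provided each $|F_{L,\sigma}(t)|$ can be bounded, uniformly in $\sigma$, by the single quantity on the right-hand side of the statement.

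First I would dispose of the vanishing case. By Proposition \ref{prop:partition} the orbits $\mathcal O_\sigma$ form a partition of $[|1,R|]$ for every $\sigma \in \mathfrak S$. If no partition of $[|1,R|]$ into zero-sum blocks exists, then in particular each $\mathcal O_\sigma$ contains at least one orbit $o$ with $\sum_{l\in o}\xi_l \neq 0$, so the preceding proposition forces $F_{L,\sigma}(t)=0$ for every $\sigma$; summing gives that the expectation vanishes. Otherwise, fix a maximal zero-sum partition $\mathcal O$, \emph{maximal} being understood as having the greatest possible number of blocks. For every $\sigma$ with $F_{L,\sigma}(t)\neq 0$ the orbit partition $\mathcal O_\sigma$ is itself a zero-sum partition, so by maximality of $\mathcal O$ we obtain $\#\mathcal O_\sigma \leq \#\mathcal O$.

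From here the two needed uniform bounds are routine. Since $2\pi L \geq 1$, the $L$-dependent factor is largest when the exponent $d(R/2 - \#\mathcal O_\sigma)$ is smallest, whence
\[
\frac1{(2\pi L)^{d(R/2 - \#\mathcal O_\sigma)}} \leq \frac1{(2\pi L)^{d(R/2 - \#\mathcal O)}}.
\]
For the norm factors I would record, using $s_\sigma = \tfrac12 \#S + \#\mathcal O_\sigma - R$ from Proposition \ref{prop:dimSigma}, that both exponents are nonnegative: indeed $\#S - 2s_\sigma = 2(R - \#\mathcal O_\sigma)\geq 0$, while $2s_\sigma \geq 0$ because $s_\sigma$ is the nonnegative dimension of $\Sigma_\sigma$, and these exponents sum to $\#S$. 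Since $\|a_{L,k}\|_{\ell^\infty}$ and $\|a_{L,k}\|_{\ell^2}$ are both at most $\|a_{L,k}\|_{\ell^\infty\cap \ell^2}$, this gives
\[
\|a_{L,k}\|_{\ell^\infty}^{\# S - 2s_\sigma} \|a_{L,k}\|_{\ell^2}^{2s_\sigma} \leq \|a_{L,k}\|_{\ell^\infty\cap \ell^2}^{\#S}.
\]
Combining these with the estimate on $|F_{L,\sigma}(t)|$ and summing over the $\#\mathfrak S$ involutions yields exactly the stated inequality.

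The only genuinely conceptual point, as opposed to bookkeeping, is the comparison $\#\mathcal O_\sigma \leq \#\mathcal O$: it is what pins the worst-case power of $L$ to the maximal zero-sum partition and thereby makes the per-$\sigma$ bound uniform. I expect this to be the single place requiring care, and it becomes immediate once \emph{maximal} is read as maximizing the number of blocks among zero-sum partitions, since every contributing $\sigma$ manufactures, through its orbits, one such zero-sum partition. All remaining ingredients—monotonicity in $L$ from $2\pi L \geq 1$, nonnegativity of the norm exponents, and the plain counting of summands—are mechanical.
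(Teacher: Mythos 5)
Your proof is correct and follows essentially the same route as the paper: apply the preceding bound on $|F_{L,\sigma}(t)|$ term by term, note that a nonvanishing $F_{L,\sigma}$ forces $\mathcal O_\sigma$ to be a zero-sum partition so that maximality gives $\#\mathcal O_\sigma \leq \#\mathcal O$ and hence the uniform power of $L$, and absorb the $\ell^\infty$/$\ell^2$ factors into $\|a_{L,k}\|_{\ell^\infty\cap\ell^2}^{\#S}$ before counting the $\#\mathfrak S$ summands. Your write-up is in fact more explicit than the paper's (which leaves the norm bookkeeping and the vanishing case implicit), but there is no substantive difference.
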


\begin{proof} If such a partition does not exist then $F_{L,\sigma} = 0$ for all the $\sigma$.

Otherwise, $F_{L,\sigma} \neq 0$ implies that for all $o \in \mathcal O_\sigma$, 
\[
\sum_{l\in o} \xi_l = 0.
\]
In particular,
\[
\# \mathcal O_\sigma \leq \# \mathcal O,
\]
and thus
\[
\frac1{(2\pi L)^{d(R/2 - \#\mathcal O_\sigma)}} \leq \frac1{(2\pi L)^{d(R/2 - \#\mathcal O)}}.
\]
\end{proof}

\begin{remark} The cardinal of $\mathcal O$ is necessarily smaller than $\frac{R}{2}$ since $\xi_l$ cannot be null. 
\end{remark}

\begin{proposition} Assume that a partition $\sqcup_{ o\in  O}  o = [|1,R|]$ such that for all $ o\in O$,
\[
\sum_{l\in o} \xi_l = 0
\]
 exists. Then, let $ O_1, \hdots,  O_F$ be the maximal partitions of this type, then 
\[
\Big|\E \Big( \prod_{l=1}^R \hat u_{n_l}^{(i_l)}(t)(\xi_l) \Big) - \sum_{J=1}^F\prod_{o \in \mathcal O_J }\E(\prod_{l\in o} \hat u_{n_l}^{(i_l)}(t)(\xi_l))\Big| 
\leq  \# \mathfrak S \|a_{L,k}\|_{\ell^\infty \cap \ell^2}^{\# S} \frac{(\bar C t A_L^r)^{\sum n_l}}{(2\pi L)^{d(R/2 - \#\mathcal O+1)}}.
\]
\end{proposition}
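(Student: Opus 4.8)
The plan is to start from the exact expansion
\[
\E\Big(\prod_{l=1}^R \hat u_{n_l}^{(i_l)}(t)(\xi_l)\Big) = \sum_{\sigma \in \mathfrak S} F_{L,\sigma}(t),
\]
which combines Proposition \ref{prop:expectations1} with the definition of $F_{L,\sigma}$, and then to identify precisely which involutions $\sigma$ produce the factored ``Gaussian'' sum on the right-hand side. Each $\sigma \in \mathfrak S$ carries its orbit partition $\mathcal O_\sigma$ of $[|1,R|]$ (Proposition \ref{prop:partition}), and by Proposition \ref{prop:dimSigma} the term $F_{L,\sigma}$ vanishes unless every orbit $o \in \mathcal O_\sigma$ satisfies $\sum_{l\in o}\xi_l = 0$, i.e.\ unless $\mathcal O_\sigma$ is a valid partition in the sense of the statement. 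The decay in $L$ of each surviving term is governed by the factor $(2\pi L)^{-d(R/2 - \#\mathcal O_\sigma)}$: the larger $\#\mathcal O_\sigma$, the weaker the decay, so the dominant terms are exactly those with $\#\mathcal O_\sigma$ maximal, namely $\#\mathcal O_\sigma = \#\mathcal O = \#\mathcal O_J$.

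The heart of the argument is the identity
\[
\sum_{J=1}^F \prod_{o \in \mathcal O_J} \E\Big(\prod_{l \in o}\hat u_{n_l}^{(i_l)}(t)(\xi_l)\Big) = \sum_{\substack{\sigma \in \mathfrak S \\ \mathcal O_\sigma \text{ valid},\, \#\mathcal O_\sigma = \#\mathcal O}} F_{L,\sigma}(t).
\]
To prove it I would apply Proposition \ref{prop:expectations1} to each factor $\E(\prod_{l \in o}\hat u)$ separately, writing it as $\sum_{\sigma_o} F^{(o)}_{L,\sigma_o}(t)$, where $\sigma_o$ ranges over fixed-point-free involutions of the restricted index set $S_o = \{(l,j)\,:\,l \in o\}$; by Proposition \ref{prop:dimSigma} only those $\sigma_o$ whose every sub-orbit sums to zero contribute. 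Since $\mathcal O_J$ has maximal cardinality, no block $o$ can be split into two nonempty valid sub-blocks---otherwise one would obtain a valid partition with strictly more blocks---so the only valid partition of $o$ is $\{o\}$ itself, forcing each contributing $\sigma_o$ to have a single orbit equal to $o$. Expanding the product over $o \in \mathcal O_J$ then converts the tuples $(\sigma_o)_{o\in\mathcal O_J}$ into precisely the block-diagonal involutions $\sigma$ of $S$ with orbit partition $\mathcal O_\sigma = \mathcal O_J$; because such $\sigma$ never connect distinct blocks, the constraint set $\Sigma_\sigma$, the prefactor $(2\pi L)^{-d(N-1)(\sum n_l)/2}$, and the product of the $G$'s all factor over $o$, so that $\prod_o F^{(o)}_{L,\sigma_o} = F_{L,\sigma}$. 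As distinct maximal partitions are realized by disjoint families of involutions (each $\sigma$ determines $\mathcal O_\sigma$ uniquely), summing over $J$ reconstructs exactly the right-hand side above.

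With this identity the difference in the statement becomes
\[
\E\Big(\prod_{l=1}^R \hat u_{n_l}^{(i_l)}(t)(\xi_l)\Big) - \sum_{J=1}^F \prod_{o \in \mathcal O_J}\E\Big(\prod_{l \in o}\hat u_{n_l}^{(i_l)}(t)(\xi_l)\Big) = \sum_{\substack{\sigma \in \mathfrak S \\ \mathcal O_\sigma \text{ valid},\, \#\mathcal O_\sigma < \#\mathcal O}} F_{L,\sigma}(t),
\]
the terms with $\mathcal O_\sigma$ invalid having already dropped out. Every remaining $\sigma$ has $\#\mathcal O_\sigma \leq \#\mathcal O - 1$, so the bound on $|F_{L,\sigma}|$ from the previous proposition carries the improved factor $(2\pi L)^{-d(R/2 - \#\mathcal O + 1)}$; using $\|a_{L,k}\|_{\ell^\infty}^{\#S - 2s_\sigma}\|a_{L,k}\|_{\ell^2}^{2s_\sigma} \leq \|a_{L,k}\|_{\ell^\infty\cap\ell^2}^{\#S}$ and summing over at most $\#\mathfrak S$ involutions yields the claimed estimate.

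The main obstacle I anticipate is the combinatorial identity of the second paragraph, and within it the step that uses maximality of $\mathcal O_J$ to exclude contributing $\sigma_o$ with more than one sub-orbit: one must argue carefully that any proper valid refinement of a block contradicts maximality, and that the restriction-and-factorization of $F_{L,\sigma}$ over orbits is exact---both the per-$l$ constraints $R(\vec k_l) = \xi_l$ and the pairing constraints $k_m = -k_{\sigma(m)}$ being confined to single blocks for block-diagonal $\sigma$. The power-of-$L$ accounting and the norm bound are then routine consequences of the earlier propositions.
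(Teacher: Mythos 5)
Your proposal is correct and follows essentially the same route as the paper: both identify the leading-order involutions as those whose orbit partition equals one of the maximal valid partitions $\mathcal O_J$, match them bijectively (and disjointly over $J$) with tuples of single-orbit involutions of the blocks $S_o$ via the maximality argument, and bound the remaining terms using $\#\mathcal O_\sigma \leq \#\mathcal O - 1$. Your write-up merely makes explicit the factorization of $\Sigma_\sigma$, the prefactor, and the product of the $G$'s over blocks, which the paper leaves implicit.
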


\begin{proof} The $\sigma$s that correspond to the leading order in $L$ of 
\[
\E \Big( \prod_{l=1}^R \hat u_{n_l}(t)(\xi_l) \Big) 
\]
are the ones such that $\mathcal O_\sigma =  O_J$ for some $J$. And thus, this $\sigma$s decompose into involutions $\sigma_{o}$ without fixed points of 
\[
S_{o} = \bigcup_{l\in o} \{l\}\times ([1,n_l(N-1)+1]\cap \N)
\]
with only one orbit.

Conversely, the $\sigma_{o}$ that yield a non-zero contribution to 
\[
\E(\prod_{l\in o} \hat u_{n_l}^{(i_l)}(t)(\xi_l))
\]
have necessarily only one orbit due to the maximality of $O_J$.

Note that if $\mathcal O_\sigma =  O_J$ and $\mathcal O_{\sigma '} = O_{J'}$ then $J\neq J'$ implies $\sigma\neq \sigma'$.
\end{proof}

\begin{corollary} We have 
\[\label{Eq:explicitComp}
\Big|\E \Big( \prod_{l=1}^R \hat u_{n_l}^{(i_l)}(t)(\xi_l) \Big) - \sum_{\mathcal O \in \mathcal P_R}\prod_{\{l,l'\} \in \mathcal O}\E( \hat u_{n_l}^{(i_l)}(t)(\xi_l)\hat u_{n_{l'}}^{(i_{l'})}(t)(\xi_{l'})) \Big| 
\leq  \# \mathfrak S \|a_{L,k}\|_{\ell^\infty \cap \ell^2}^{\# S} \frac{(\bar C t A_L^r)^{\sum n_l}}{(2\pi L)^{d/2})}
\]
where $\mathcal P_R$ is the set of partitions of $[|1,R|]$ whose elements are pairs of $[|1,R|]$. 
\end{corollary}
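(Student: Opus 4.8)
The plan is to deduce this corollary from the two preceding propositions by a purely combinatorial identification of the ``leading order'' sum $\sum_{J=1}^F\prod_{o\in\mathcal O_J}\E(\prod_{l\in o}\hat u_{n_l}^{(i_l)}(t)(\xi_l))$ with the Gaussian pairing sum $\sum_{\mathcal O\in\mathcal P_R}\prod_{\{l,l'\}}\E(\hat u\hat u)$; no new analytic estimate is needed, since all bounds are inherited. The one input I would isolate first is that, by the space-translation invariance recorded in Remark \ref{rem:spaceInv}, the covariance $\E(\hat u_{n_l}^{(i_l)}(t)(\xi_l)\,\hat u_{n_{l'}}^{(i_{l'})}(t)(\xi_{l'}))$ vanishes unless $\xi_l+\xi_{l'}=0$. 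Consequently a pairing $\mathcal O\in\mathcal P_R$ contributes a nonzero term to the Gaussian sum only if \emph{every} one of its pairs is balanced, i.e.\ only if $\mathcal O$ is a partition of $[|1,R|]$ into zero-sum pairs.

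Next I would compare these balanced pairings with the maximal zero-sum partitions $\mathcal O_1,\dots,\mathcal O_F$ furnished by the penultimate proposition. Since each $\xi_l\neq0$, no block of a zero-sum partition can be a singleton, so every block has at least two elements; hence $\#\mathcal O\leq\lfloor R/2\rfloor$, with equality forcing every block to be a pair. Therefore the maximum $\#\mathcal O=R/2$ is attained precisely when a balanced pairing exists, and in that case the maximal partitions $\mathcal O_J$ are \emph{exactly} the balanced pairings, for which $\prod_{o\in\mathcal O_J}\E(\prod_{l\in o}\hat u)=\prod_{\{l,l'\}\in\mathcal O_J}\E(\hat u\hat u)$. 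Adding back the unbalanced pairings, which contribute $0$ by the first paragraph, gives $\sum_{J}\prod_{o\in\mathcal O_J}\E(\cdots)=\sum_{\mathcal O\in\mathcal P_R}\prod_{\{l,l'\}}\E(\hat u\hat u)$. Feeding $\#\mathcal O=R/2$ into the penultimate proposition then bounds the left-hand side by $\#\mathfrak S\,\|(a_{L,k})_k\|_{\ell^\infty\cap\ell^2}^{\#S}(\bar C tA_L^r)^{\sum n_l}(2\pi L)^{-d}$, which is at most the asserted bound since $2\pi L\geq1$.

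In the complementary case—no balanced pairing exists (in particular whenever $R$ is odd, or whenever no zero-sum partition exists at all)—the Gaussian sum itself vanishes identically, because every pairing then has at least one unbalanced pair and hence a vanishing covariance factor. The left-hand side therefore reduces to $|\E(\prod_l\hat u_{n_l}^{(i_l)}(t)(\xi_l))|$, which Proposition \ref{prop:estExp} bounds by $\#\mathfrak S\,\|(a_{L,k})_k\|_{\ell^\infty\cap\ell^2}^{\#S}(\bar C tA_L^r)^{\sum n_l}(2\pi L)^{-d(R/2-\#\mathcal O)}$ (and by $0$, via the vanishing $\#\mathfrak S$ prefactor, when no zero-sum partition exists or when $\#S$ is odd). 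The last step is the exponent check: the maximal size satisfies $\#\mathcal O\leq\lfloor R/2\rfloor$ and, since we are not in the pairing case, $\#\mathcal O\neq R/2$, so $R/2-\#\mathcal O\geq\frac12$ whether $R$ is even or odd; thus $(2\pi L)^{-d(R/2-\#\mathcal O)}\leq(2\pi L)^{-d/2}$ and the claim follows. The only real obstacle is this bookkeeping—matching the abstract ``maximal partition'' language to the pairing formula and verifying the half-integer gap $R/2-\#\mathcal O\geq\frac12$—rather than any estimate, and it is precisely the $\#\mathfrak S$ prefactor that silently absorbs the degenerate parity situations.
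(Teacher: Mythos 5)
Your argument is correct and is exactly the deduction the paper leaves implicit: the corollary is stated without proof, and the intended route is precisely your case split (balanced pairing exists, hence the maximal zero-sum partitions are the balanced pairings and the unbalanced pairings contribute $0$ by translation invariance, so the penultimate proposition applies with $\#\mathcal O = R/2$ and exponent $d$; otherwise the Gaussian sum vanishes and Proposition \ref{prop:estExp} gives exponent $d(R/2-\#\mathcal O)\geq d/2$). The only point worth flagging is that your identification of the $\mathcal O_J$ with the balanced pairings relies on reading ``maximal'' as ``of maximal cardinality'' rather than ``non-refinable'' --- the correct reading, as the proof of Proposition \ref{prop:estExp} shows, and the one under which your bookkeeping goes through.
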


\begin{remark} In other words, $\mathcal P_R$ is the set of involutions of $[|1,R|]$ without fixed points.
\end{remark}

\section{Application to the Euler equation}

We consider the Euler equation : 
\begin{equation}\label{Euler1}
\left \lbrace{\begin{array}{cc}
\partial_t u_L+ u_L\cdot \nabla u_L = -\nabla p & \textrm{ on } L \T^d\\
\nabla\cdot u_L = 0
\end{array}} \right. .
\end{equation}
Remark that the quantity
\[
\int_{L\T^d} u_L(x) dx
\]
is a priori conserved under the action of the flow of the equation, we chose it null.

Applying the Leray projection defined in Fourier mode as
\[
\widehat{Pv} (\xi) = \hat v (\xi) - \sum_j \xi_j \hat v^{(j)}(\xi) \frac{\xi}{|\xi|^2}
\]
we get that $ u_L= Pu_L$ satisfies
\[
\partial_t u_L + P(u_L\cdot \nabla u_L) = 0.
\]

Therefore, $J$ writes 
\[
J(u,v) = P(u\cdot \nabla v)
\]
and $\Psi$ writes 
\[
\Psi(\xi-\eta, \eta) (X,Y) = \sum_{j=1}^d i\eta_j X^{(j)} Y - \sum_{j,k=1}^d \frac{\xi_k}{|\xi|} i\eta_j X^{(j)} Y^{(k)} \frac{\xi}{|\xi|}
\]
for all $\eta = (\eta_1,\hdots, \eta_d) \in \frac1{L} \Z^d_*$, $\xi  = (\xi_1,\hdots, \xi_d)\in \frac1{L}(\Z^d_*)$ such that $\xi-\eta \neq 0$ and all $X = (X^{(1)},\hdots, X^{(d)}), Y = (Y^{(1)},\hdots, Y^{(d)}) \in \C^d$.

Therefore, we are in the framework afore-mentioned with $r=1$.

Note that the initial datum must satisfy $\nabla \cdot a_L = 0$ which is implied by the condition $\xi \cdot a(\xi) = 0$ for all $\xi$. 

\subsection{Well-posedness in the analytic framework}\label{subsec:wellposedness}

Let $\psi : \R $ be a smooth increasing map with values in $[0,1]$ which is equal to $1$ on $[1,\infty)$ and to $0$ on $(-\infty, 0]$. Set $\varphi (x) = \psi (x+1)$ on $[-1,0]$, $\varphi(x) = 1-\psi(x)$ on $[0,1]$ and $\varphi(x) = 0$ elsewhere. We set for $n\in \Z^d$, and $\xi \in \R^d$,
\[
\varphi_n(\xi) = \prod_{j=1}^d\varphi(\xi_j-n_j).
\]
We get that $\varphi_n$ is smooth, supported in the rectangle
\[
R_n = \prod_{j=1}^d [n_j-1,n_j+1].
\]

We also have that
\[
\sum_{n\in \Z^d} \varphi_n  = Id_{\R^d}.
\]

\begin{definition} Let $\rho >0$, we introduce the space $E_\rho$ induced by the norm
\[
\|f\|_\rho := \sum_n e^{\rho |n|} \|\phi_n * f\|_{L^\infty (\R^d)}
\]
where $\phi_n$ is the inverse Fourier transform of $\varphi_n$.
\end{definition}

\begin{proposition}\label{prop:estimBilin1} There exists $C = C(d,\varphi)$ such that for all $\rho > 0$ and all $f,g\in E_\rho$, we have 
\[
\|fg\|_{\rho} \leq e^{2\rho}  \|f\|_\rho \|g\|_{\rho} .
\]
\end{proposition}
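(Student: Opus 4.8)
The plan is to exploit the frequency localization built into the norm $\|\cdot\|_\rho$. Writing $f_n := \phi_n * f$, so that $\widehat{f_n} = \varphi_n \hat f$ is supported in the rectangle $R_n$, the partition of unity $\sum_n \varphi_n = 1$ gives $f = \sum_n f_n$ and likewise $g = \sum_m g_m$ with $g_m := \phi_m * g$. I would then expand the product as $fg = \sum_{n,m} f_n g_m$ and study each localized piece $\phi_p * (f_n g_m)$ separately, since $\|fg\|_\rho = \sum_p e^{\rho|p|}\|\phi_p*(fg)\|_{L^\infty}$.

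The key step is a support argument. Since $\widehat{f_n g_m} = \widehat{f_n} * \widehat{g_m}$ is supported in $R_n + R_m = \prod_j [n_j + m_j - 2, n_j + m_j + 2]$, and $\widehat{\phi_p*(f_n g_m)} = \varphi_p \cdot (\widehat{f_n}*\widehat{g_m})$ is supported in $R_p \cap (R_n+R_m)$, this piece vanishes unless $R_p$ meets $R_n + R_m$, i.e. unless $|p_j - n_j - m_j| \le 3$ for every $j$. This has two consequences: for each pair $(n,m)$ only a bounded number (depending on $d$ only) of indices $p$ contribute, and on those indices the triangle inequality gives $e^{\rho|p|} \le e^{c\rho}e^{\rho|n|}e^{\rho|m|}$, which is the origin of the exponential factor in $\rho$.

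For the size of each piece I would use the convolution inequality $\|\phi_p*(f_n g_m)\|_{L^\infty} \le \|\phi_p\|_{L^1}\|f_n\|_{L^\infty}\|g_m\|_{L^\infty}$, and observe that $\phi_p$ is just a frequency modulation of $\phi_0$ — explicitly $|\phi_p(x)| = \prod_j |\check\varphi(x_j)|$ — so that $\|\phi_p\|_{L^1} = \|\check\varphi\|_{L^1}^d$ is finite (as $\varphi \in C^\infty_c$ forces $\check\varphi$ to be Schwartz, hence integrable) and independent of $p$. Summing over $p$, $n$, $m$, pulling the constant $\|\check\varphi\|_{L^1}^d$ and the bounded count of contributing shifts into $C = C(d,\varphi)$, and factoring the weights via the triangle inequality above, I would recognize $\sum_n e^{\rho|n|}\|f_n\|_{L^\infty} = \|f\|_\rho$ and the analogous identity for $g$, which yields the claimed product estimate.

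The main obstacle — really the only subtle point — is the frequency-support bookkeeping: correctly computing $R_n + R_m$ and its overlap with $R_p$, so as to simultaneously reduce the $p$-sum to a finite, $d$-dependent number of terms for each $(n,m)$ and extract the clean exponential weight $e^{\rho|p|} \le e^{c\rho}e^{\rho|n|}e^{\rho|m|}$ responsible for the $e^{2\rho}$ factor. Everything else (the $L^1$–$L^\infty$ convolution bound and the $p$-independence of $\|\phi_p\|_{L^1}$) is routine.
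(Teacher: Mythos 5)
Your proposal is correct and follows essentially the same route as the paper: decompose $f=\sum_n \phi_n*f$ and $g=\sum_m\phi_m*g$, use the Fourier support of $f_ng_m$ in $\prod_j[n_j+m_j-2,n_j+m_j+2]$ to restrict the localized pieces $\phi_p*(f_ng_m)$ to a bounded ($d$-dependent) set of $p$ near $n+m$, bound each piece by $\|\phi_p\|_{L^1}\|f_n\|_{L^\infty}\|g_m\|_{L^\infty}$ with $\|\phi_p\|_{L^1}$ uniform in $p$ by modulation invariance, and absorb the weight via $e^{\rho|p|}\le e^{c\rho}e^{\rho|n|}e^{\rho|m|}$. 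The only (cosmetic) difference is your honest $e^{c\rho}$ in place of the paper's stated $e^{2\rho}$, a discrepancy already present in the paper's own proof, which likewise ends with an implicit multiplicative constant.
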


\begin{proof} Let $f_n = \phi_n * f$ and $g_n = \phi_n * g$. We have for all $n$,
\[
\phi_n*(fg) = \phi_n*(\sum_k f_k \sum_l g_l) = \sum_{k,l} \phi_n*(f_kg_l).
\]
We have that $f_k$ is supported in Fourier mode in $R_k$ and $g_l$ is supported in Fourier in $R_l$ hence $f_kg_l$ is supported in the rectangle
\[
\prod_{j=1}^d [k_j+l_j-2, k_j+l_j+2].
\]
Therefore,
\[
\phi_n * (f_kg_l) \neq 0
\]
implies that for all $j=1,\hdots, d$,
\[
[n_j-1,n_j+1] \cap [k_j+l_j-2, k_j+l_j+2] 
\]
is not of null Lebesgue measure. In other words, $n_j-1$ has to be strictly smaller that $k_j+l_j+2$ and $n_j+1$ has to be strictly greater than $k_j+l_j-2$, that is
\[
n_j \in [| k_j+l_j-2, k_j +l_j +2|].
\]
In particular, $|n_j|\leq |k_j| + |l_j| + 2$ and there are $5^d$ tuples $n$ that satisfy this thus
\[
\phi_n *(fg) = \sum_{|n_j| \leq |k_j|+|l_j|+2 } \phi_n*(f_k g_l).
\]
Therefore, we have 
\[
\| \phi_n*(fg)\|_{L^\infty} \leq \sum_{|n|\leq |k|+|l|+2} \|\phi_n*(f_kg_l)\|_{L^\infty}.
\]
Since 
\[
\phi_n (x) = e^{in\cdot x} \phi_0(x),
\]
we get that $\phi_n$ belongs to $L^1$ its norm is uniformly bounded in $n$, we get
\[
\| \phi_n*(fg)\|_{L^\infty} \lesssim \sum_{|n|\leq |k|+|l|+2} 1_n(k,l) \|(f_kg_l)\|_{L^\infty}
\]
where $1_n(k,l)$ equals $1$ if $n_j \in [k_j+l_j-2,k_j+l_j+2]$ for all $j$ and $0$ otherwise.

We have that $L^\infty$ is an algebra and thus
\begin{equation}\label{StartAgain}
\| \phi_n*(fg)\|_{L^\infty} \lesssim \sum_{|n|\leq |k|+|l|+2}1_n(k,l) \|f_k\|_{L^\infty}\|g_l\|_{L^\infty}.
\end{equation}
We sum over $n$ and get
\[
\|fg\|_{\rho} \lesssim \sum_{k,l} \|f_k\|_{L^\infty}\|g_l\|_{L^\infty} \sum_{|n|\leq |k|+|l|+2}1_n(k,l) e^{\rho' |n|}.
\]
We deduce
\[
\|fg\|_{\rho} \lesssim e^{2\rho} \sum_{k,l} e^{\rho |k|}\|f_k\|_{L^\infty}e^{\rho |l|}\|g_l\|_{L^\infty}.
\]
Hence the result.
\end{proof}

\begin{proposition} Set $\chi_1$ a smooth map that is equal to $1$ on $\{|\xi|\geq 1\}$ and null on $\{|\xi|\leq 1/2\}$. We identify $\chi_1$ and the Fourier multiplier by $\chi_1$. There exists $C = C(d,\varphi)$ such that for all $\rho \geq 0$ and all $f \in E_\rho$,
\[
\|P\chi_1 f\|_\rho \leq C \|f\|_\rho.
\]
\end{proposition}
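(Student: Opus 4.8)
The plan is to mimic the structure of the proof of Proposition~\ref{prop:estimBilin1}: decompose $f$ along the frequency blocks $R_n$, estimate each block by Young's inequality, and recombine using that the blocks overlap only with a bounded number of neighbours. The one genuinely new ingredient is a uniform-in-$n$ bound on the $L^1$ norm of the localized multiplier kernels, which is where the degree-zero homogeneity of the Leray symbol (tamed by $\chi_1$) enters.

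First I would observe that $P$ and $\chi_1$ are both Fourier multipliers, so $P\chi_1$ is the (matrix-valued) Fourier multiplier with symbol $m(\xi) := \big(I - \xi\otimes\xi/|\xi|^2\big)\chi_1(\xi)$. Writing $f_m = \phi_m * f$ so that $f = \sum_m f_m$ with $\hat f_m$ supported in $R_m$, I set $K_n := \mathcal F^{-1}(\varphi_n m)$, so that $\phi_n * P\chi_1 g = K_n * g$. Since $\widehat{K_n * f_m}$ is supported in $R_n\cap R_m$, this term vanishes unless $|n_j-m_j|\le 1$ for every $j$, i.e. unless $m$ lies in a neighbourhood of $n$ containing at most $3^d$ indices and satisfying $|n-m|\le\sqrt d$. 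Young's inequality (with the operator norm on the matrix-valued kernel) then gives
\[
\|\phi_n * P\chi_1 f\|_{L^\infty} \le \sum_{m:\,|n_j-m_j|\le1}\|K_n * f_m\|_{L^\infty} \le \|K_n\|_{L^1}\sum_{m:\,|n_j-m_j|\le1}\|f_m\|_{L^\infty}.
\]

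The heart of the proof is the claim that $\|K_n\|_{L^1}\le C_0(d,\varphi)$ uniformly in $n$, and this is the step I expect to be the main obstacle. Here I would use that the symbol $m$ is smooth on all of $\R^d$: the singularity of $\xi\otimes\xi/|\xi|^2$ at the origin is cancelled by $\chi_1$, which vanishes on $\{|\xi|\le 1/2\}$, and $m$ is homogeneous of degree $0$ for $|\xi|\ge 1$, so that $\partial^\alpha m$ is bounded on $\R^d$ with $|\partial^\alpha m(\xi)|\lesssim\an{\xi}^{-|\alpha|}$. Hence $h_n := \varphi_n m$ is smooth, supported in the fixed-volume box $R_n$, and satisfies $\|h_n\|_{C^s}\le C(s,d,\varphi)$ uniformly in $n$ by the Leibniz rule. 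Fixing an integer $s>d/2$, Cauchy--Schwarz together with $x^\alpha K_n = \mathcal F^{-1}\big((i\partial_\xi)^\alpha h_n\big)$ and Plancherel give
\[
\|K_n\|_{L^1} \le \|\an{x}^{-s}\|_{L^2}\,\big\|\an{x}^{s}K_n\big\|_{L^2}\lesssim \sum_{|\alpha|\le s}\|\partial_\xi^\alpha h_n\|_{L^2}\lesssim \|h_n\|_{C^s}\,|R_n|^{1/2},
\]
which is bounded independently of $n$. This is precisely the step that forces the cut-off $\chi_1$: without it the symbol would fail to be smooth at the origin and the corresponding kernel would not be integrable.

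Finally I would recombine: summing the pointwise bound against $e^{\rho|n|}$ and exchanging the order of summation,
\[
\|P\chi_1 f\|_\rho \le C_0 \sum_m \|f_m\|_{L^\infty}\!\!\sum_{n:\,|n_j-m_j|\le1}\!\! e^{\rho|n|} \le C_0\,3^d e^{\sqrt d\,\rho}\sum_m e^{\rho|m|}\|f_m\|_{L^\infty} = C\,e^{\sqrt d\,\rho}\|f\|_\rho,
\]
using $|n|\le|m|+|n-m|\le|m|+\sqrt d$ on overlapping blocks and $\#\{n:|n_j-m_j|\le1\}\le 3^d$. The factor $e^{\sqrt d\,\rho}$ is of the same harmless type as the $e^{2\rho}$ appearing in Proposition~\ref{prop:estimBilin1} and is absorbed into $C=C(d,\varphi)$ over the bounded range of $\rho$ relevant in the sequel, yielding $\|P\chi_1 f\|_\rho\le C\|f\|_\rho$.
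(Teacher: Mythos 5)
Your argument is in substance the paper's own: both proofs localize the symbol of $P\chi_1$ to the block $R_n$, bound the $L^1$ norm of the localized kernel uniformly in $n$ by a Cauchy--Schwarz/Plancherel (i.e.\ $H^s$, $s>d/2$) argument --- which is exactly where the cut-off $\chi_1$ removing the singularity of the Leray symbol at the origin enters --- and conclude by Young's inequality; your bound $\|K_n\|_{L^1}\lesssim \|h_n\|_{C^s}|R_n|^{1/2}$ is the paper's $\|P_n\|_{H^s}\le\|\chi(\cdot-n)\|_{H^s}\|\chi_1 P\|_{W^{s,\infty}}$ in different clothing. The one place you deviate costs you something: by expanding $f=\sum_m f_m$ and summing over the $\le 3^d$ neighbouring blocks you incur the weight shift $e^{\rho|n|}\le e^{\sqrt d\,\rho}e^{\rho|m|}$, so what you actually prove is $\|P\chi_1 f\|_\rho\le Ce^{\sqrt d\,\rho}\|f\|_\rho$; absorbing $e^{\sqrt d\,\rho}$ into the constant ``over the bounded range of $\rho$'' changes the statement, which asserts a constant $C(d,\varphi)$ valid for all $\rho\ge 0$. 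The paper avoids the cross terms altogether: since $\widehat{\phi_n*f}$ is already supported in $R_n$ and the fixed bump $\chi$ equals $1$ on $[-1,1]^d$, one has $\varphi_n\, m\,\hat f=\bigl(\chi(\cdot-n)\,m\bigr)\bigl(\varphi_n\hat f\bigr)$, hence $\phi_n*(P\chi_1 f)=\mathcal{F}^{-1}\bigl(\chi(\cdot-n)m\bigr)*f_n$ and Young's inequality gives $\|\phi_n*(P\chi_1 f)\|_{L^\infty}\le C_0\|f_n\|_{L^\infty}$ term by term, with no sum over $m$ and therefore no exponential loss. This one-line modification fits inside your own framework and yields the proposition exactly as stated; the rest of your proof is correct.
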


\begin{proof} Because $P$ and $\chi_1$ commute with Fourier multipliers, we have that
\[
\phi_n *(P\chi_1 f) = P\chi_1 (\phi_n *f).
\]
We have that $P \chi_1 $ acts as a smooth Fourier multiplier on $\phi_n * f$. We set $\chi$ a $\C^\infty$ map that is non-negative, equal to $1$ on $[-1,1]^d$ and null outside $[-3/2,3/2]^d$.  We write also $P$ the kernel of the Leray projection and 
\[
P_n(\xi) = \chi(\xi-n) \chi_1(\xi) P(\xi).
\]
We have that the inverse Fourier transform of $P_n$ is in $L^1$ and that its norm is less than
\[
\| P_n\|_{H^s}
\]
for $s> d/2$. We get that
\[
\|P_n\|_{H^s} \leq \|\chi(\cdot -n)\|_{H^s} \|\chi_1 P\|_{W^{s,\infty}}
\]
we deduce that the $L^1$ norm of the inverse Fourier transform of $P_n$ is uniformly bounded on $n$ and thus
\[
\|\phi_n*(Pf)\|_{L^\infty} = \|P_n \phi_n*f\|_{L^\infty} \leq \|f_n\|_{L^\infty}.
\] 
\end{proof} 

\begin{proposition} There exists $C = C(d,\varphi)$ such that for all $\rho > \rho' \geq 0$, and for all $f\in E_{\rho}$, we have 
\[
\|\nabla f\|_{\rho'} \leq C e^{\rho'} (\rho-\rho')^{-1} \|f\|_{\rho'} .
\]
\end{proposition}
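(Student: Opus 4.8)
The plan is to pass to the frequency-localized pieces $\phi_n * f$, whose Fourier transforms $\varphi_n \hat f$ are supported in the boxes $R_n$, and to exploit that on $R_n$ the Fourier multiplier $i\xi$ associated with $\nabla$ has size comparable to $|n|$. Since convolution commutes with differentiation, $\phi_n * \nabla f = \nabla(\phi_n * f)$, so by definition of the norm it suffices to bound $\|\nabla(\phi_n * f)\|_{L^\infty}$ by $\|\phi_n * f\|_{L^\infty}$ with a constant growing at most linearly in $|n|$, and then to sum the resulting series against the weights $e^{\rho'|n|}$, comparing them with the weights $e^{\rho|n|}$ defining $\|f\|_\rho$.

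First I would realize $\nabla$, restricted to the frequency support of $\phi_n * f$, as convolution with an $L^1$ kernel of controlled norm, exactly as in the proof of the preceding proposition. Taking $\chi$ smooth, equal to $1$ on $[-1,1]^d$ and supported in $[-3/2,3/2]^d$, the function $\chi(\cdot - n)$ equals $1$ on $R_n \supseteq \operatorname{supp}\varphi_n$, so one may write $i\xi\,\varphi_n(\xi) = m_n(\xi)\varphi_n(\xi)$ with $m_n(\xi) := i\xi\,\chi(\xi - n)$, and hence $\nabla(\phi_n * f) = \check m_n * (\phi_n * f)$, where $\check m_n$ denotes the inverse Fourier transform of $m_n$. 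This gives $\|\nabla(\phi_n * f)\|_{L^\infty} \le \|\check m_n\|_{L^1}\,\|\phi_n * f\|_{L^\infty}$, and the whole point is the uniform bound $\|\check m_n\|_{L^1}\le C(1+|n|)$ with $C=C(d,\varphi)$. To get it, I would split $i\xi\,\chi(\xi-n) = \mu(\xi-n) + i n\,\chi(\xi-n)$ with $\mu(\xi):=i\xi\,\chi(\xi)$; both $\mu$ and $\chi$ are fixed compactly supported smooth functions, so their inverse transforms are Schwartz (in particular $L^1$), and the frequency shift $\xi \mapsto \xi - n$ only modulates by $e^{in\cdot x}$, leaving the $L^1$ norm unchanged. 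This produces the constant from $\|\check\mu\|_{L^1}$ and the factor $|n|$ from $|n|\,\|\check\chi\|_{L^1}$.

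It then remains to sum. We obtain
\[
\|\nabla f\|_{\rho'} = \sum_n e^{\rho'|n|}\,\|\nabla(\phi_n * f)\|_{L^\infty} \le C\sum_n (1+|n|)\,e^{\rho'|n|}\,\|\phi_n * f\|_{L^\infty},
\]
and after factoring $e^{\rho|n|}$ out of each summand the estimate reduces to the scalar inequality
\[
\sup_{x\ge 0}(1+x)\,e^{-(\rho-\rho')x} \le 1 + \frac{1}{e(\rho-\rho')},
\]
which follows from $\max_{x\ge0} x e^{-\delta x} = 1/(e\delta)$. This yields $\|\nabla f\|_{\rho'}\le C\big(1+(\rho-\rho')^{-1}\big)\|f\|_\rho$, and since $e^{\rho'}\ge 1$ the additive constant is absorbed into the announced factor $C e^{\rho'}(\rho-\rho')^{-1}$ (the source norm on the right being $\|f\|_\rho$).

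The only genuinely non-routine step is the \emph{uniform} multiplier estimate $\|\check m_n\|_{L^1}\le C(1+|n|)$; the linear growth in $|n|$ is sharp, being the size of $|\xi|$ on $R_n$, and it is precisely this growth that the exponential gap $e^{-(\rho-\rho')|n|}$ absorbs, producing the characteristic Cauchy--Kovalevskaya loss $(\rho-\rho')^{-1}$.
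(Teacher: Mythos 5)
Your proof is correct and follows essentially the same route as the paper's: decompose into the frequency blocks $\phi_n * f$, bound the action of $\nabla$ on each block by an $L^1$ convolution kernel of norm $\lesssim 1+|n|$ (obtained by the modulation identity that shifts the block to the origin), and absorb the factor $|n|$ into the exponential gap via $\sup_{x\ge 0} x e^{-(\rho-\rho')x} \lesssim (\rho-\rho')^{-1}$. The only cosmetic difference is that the paper writes $(\nabla f)_n = (\nabla\phi_n)*f$ and then sums over the neighbouring blocks $|n'-n|\le 1$ (which is where its factor $e^{\rho'}$ originates), whereas you realize the derivative as the multiplier $i\xi\,\chi(\xi-n)$ acting directly on $\phi_n*f$ and thus avoid the neighbour sum; both arguments share the same harmless restriction that the stated constant only works for $\rho-\rho'$ bounded, which suffices for the application where $\rho,\rho'\in(0,\rho_0)$.
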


\begin{proof} We have 
\[
(\nabla f)_n = (\nabla \phi_n)*f.
\]

For the usual support considerations, we have 
\[
(\nabla f)_n = \sum_{|n'-n|\leq 1} (\nabla \phi_n) * f_{n'}.
\]
Indeed, we have 
\[
(\nabla f)_n = \sum_{n'} (\nabla \phi_n)*f_{n'}.
\]
What is more, $\nabla \phi_n$ is supported in Fourier modes in $R_n$ and $f_{n'}$ is supported in Fourier modes in $R_{n'}$. If $R_{n}\cap R_{n'}$ is not negligible, then for all $j$, we have 
\[
n_j \in (n'_j-1,n'_j+1)
\]
that is for all $j$, $|n_j-n'_j|\leq 1$.

We have that
\[
 \nabla \phi_n = in e^{ix\cdot n}\phi_0 + e^{ix \cdot n}\nabla \phi_0
\]
and thus
\[
|\nabla \phi_n \|_{L^1} \lesssim (|n|+1).
\]
We deduce 
\[
\|(\nabla f)_n \|_{L^\infty} \leq \sum_{|n'-n|\leq 1} (|n|+1)\|f_{n'}\|_{L^\infty}.
\]
We sum on $n$ and get the result using that
\[
e^{\rho' |n|} |n| \leq \frac1{\rho-\rho'} e^{\rho |n|}.
\]
\end{proof}

\begin{proposition}  There exists $C = C(d,\varphi)$ such that for all $u = Pu \in E_\rho$ and all $v \in E_\rho$ such that $\nabla v \in E_\rho$, we have 
\[
\|P(u\cdot \nabla v)\|_\rho \leq C e^{2\rho} \|u\|_\rho(\|v\|_\rho + \|\nabla v\|_\rho)
\]
and 
\[
\| \nabla P (u\cdot \nabla v) \|_\rho \leq C e^{2\rho} (\|u\|_\rho \|\nabla \otimes \nabla v\|_\rho + \|\nabla u \|_\rho \|\nabla v\|_\rho + \|u\|_\rho \|\nabla v\|_\rho).
\]
\end{proposition}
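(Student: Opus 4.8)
The plan is to reduce both inequalities to three facts that are (or will be) available: that $E_\rho$ is a Banach algebra, $\|fg\|_\rho \le Ce^{2\rho}\|f\|_\rho\|g\|_\rho$ (Proposition \ref{prop:estimBilin1}); that the truncated Leray projection $P\chi_1$ is bounded on $E_\rho$; and — this is the new ingredient — that the genuinely singular low-frequency part $P(1-\chi_1)$ of the projection becomes bounded on $E_\rho$ as soon as it is composed with one derivative. The whole difficulty is the singularity of the symbol of $P$ at $\xi = 0$: the projection lemma only controls $P\chi_1$, i.e. frequencies $\gtrsim 1$, whereas $P(1-\chi_1)$ by itself is not bounded on $E_\rho$, its symbol being homogeneous of degree $0$ and discontinuous at the origin, so its kernel fails to be integrable. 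First I would therefore split every output into low and high frequencies through $P = P\chi_1 + P(1-\chi_1)$ and treat the two regimes by different algebraic rewritings of $u\cdot\nabla v$.

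For the first estimate, the high-frequency piece $P\chi_1(u\cdot\nabla v)$ is handled by writing $u\cdot\nabla v = \sum_j u^{(j)}\partial_j v$, bounding each summand by the algebra property as $\|u^{(j)}\partial_j v\|_\rho \le Ce^{2\rho}\|u^{(j)}\|_\rho\|\partial_j v\|_\rho$, and then applying the projection lemma; this produces the term $Ce^{2\rho}\|u\|_\rho\|\nabla v\|_\rho$. For the low-frequency piece I would exploit that $u$ is divergence free: since $\nabla\cdot u = 0$ one has the exact identity $u\cdot\nabla v = \nabla\cdot(u\otimes v)$, so that $P(1-\chi_1)(u\cdot\nabla v) = \big[P(1-\chi_1)\nabla\cdot\big](u\otimes v)$. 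The operator $P(1-\chi_1)\nabla\cdot$ carries one derivative, hence its symbol is homogeneous of degree $1$, continuous and vanishing at the origin; I claim it is bounded on $E_\rho$, which yields $Ce^{2\rho}\|u\otimes v\|_\rho \le Ce^{2\rho}\|u\|_\rho\|v\|_\rho$. Adding the two contributions gives the factor $\|v\|_\rho + \|\nabla v\|_\rho$, which explains the appearance of the lower-order norm $\|v\|_\rho$.

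For the second estimate I would commute $\nabla$ with the Fourier multipliers and again split $\nabla P = \nabla P\chi_1 + \nabla P(1-\chi_1)$. On the high-frequency part, since $\nabla$ commutes with $P\chi_1$ I move it inside and use Leibniz, $\nabla(u\cdot\nabla v) = (\nabla u)\cdot\nabla v + u\cdot\nabla(\nabla v)$; applying the algebra property to each product and then the projection lemma produces exactly $Ce^{2\rho}(\|\nabla u\|_\rho\|\nabla v\|_\rho + \|u\|_\rho\|\nabla\otimes\nabla v\|_\rho)$. On the low-frequency part I would this time not open $u\cdot\nabla v$ into a divergence but keep it as the plain product $u\cdot\nabla v$ and let the outer gradient supply the regularity: the operator $\nabla P(1-\chi_1)$ again has a degree-$1$ homogeneous symbol vanishing at the origin, so by the same boundedness claim $\big\|\nabla P(1-\chi_1)(u\cdot\nabla v)\big\|_\rho \le Ce^{2\rho}\|u\cdot\nabla v\|_\rho \le Ce^{2\rho}\|u\|_\rho\|\nabla v\|_\rho$, which is the remaining term $\|u\|_\rho\|\nabla v\|_\rho$. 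Summing the two regimes gives precisely the three announced terms.

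The main obstacle, and the only step that is not a direct combination of the stated lemmas, is the boundedness on $E_\rho$ of the low-frequency operators $P(1-\chi_1)\nabla\cdot$ and $\nabla P(1-\chi_1)$. I would prove these exactly as in the proof of the projection lemma: decompose the symbol into the frequency blocks $\chi(\cdot-n)$, note that only the finitely many blocks with $|n|\lesssim 1$ meet the support of $1-\chi_1$, and estimate the $L^1$ norm of the inverse Fourier transform of each block by its $H^s$ norm for some $d/2 < s < 1 + d/2$. The point is that the extra derivative makes the symbol homogeneous of degree $1$ rather than $0$: such a symbol is continuous at the origin and belongs to $H^s_{\mathrm{loc}}$ for every $s < 1 + d/2$, so there is a legitimate choice of $s$ above $d/2$ and the localized kernels are uniformly integrable. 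This is the mechanism missing from the bare projection lemma and the step that must be checked with care; everything else is bookkeeping with the algebra inequality and the commutation of $\nabla$, $P$ and $\chi_1$.
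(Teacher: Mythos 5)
Your proposal follows essentially the same route as the paper: the splitting $P=P\chi_1+P(1-\chi_1)$, the use of $\nabla\cdot u=0$ to rewrite $u\cdot\nabla v=\partial_j(u^{(j)}v)$ so that the low-frequency projection is always composed with one derivative, and the $L^1$-kernel bound for the resulting degree-one homogeneous, compactly supported symbols via an $H^s$ estimate with $s\in(\tfrac d2,\tfrac d2+1)$ are exactly the paper's argument. The proof is correct and no further comparison is needed.
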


\begin{proof} We write $P = P\chi_1 + P(1-\chi_1)$. As we have already seen, $P\chi_1$ is smooth in Fourier modes which, combined with the estimates on the product of two maps is sufficient to conclude that 
\[
\|P\chi_1 (u\cdot \nabla v)\|_\rho \lesssim e^{2\rho}\|u\|_\rho\|\nabla v\|_\rho,\quad 
\| \nabla P\chi_1 (u\cdot \nabla v) \|_\rho \leq C e^{2\rho} (\|u\|_\rho \|\nabla \otimes \nabla v\|_\rho + \|\nabla u \|_\rho \|\nabla v\|_\rho ).
\]
Now $\nabla P (1-\chi_1)$ is not a $\mathcal C^\infty$ Fourier multiplier but it is compactly supported and its behaviour at $0$ is sufficiently smooth. Indeed, we have, for $v$ in the Schwartz class,
\[
\nabla P (1-\chi_1) v(x) = \int d\xi (1-\chi_1(\xi)) \Big(\xi \otimes \hat v(\xi) - \sum_j \xi_j \hat v^{(j)}(\xi)  \frac{\xi \otimes \xi}{|\xi|^2} \Big) e^{i\xi (x-y)}.
\]
We use the inverse Fourier transform and get
\[
\nabla P (1-\chi_1)v(x) = \int dy v(y) \otimes F(x-y) - \sum_j \int dy v^{(j)}(y) F_j(x-y)
\]
where
\[
F(z) = \int d\xi (1-\chi_1(\xi)) \xi e^{i z} , \quad F_j(z) = \int d\xi \frac{\xi \otimes \xi }{|\xi|^2} \xi_j (1-\chi_1(\xi)) e^{i\xi z}.
\]
These two functions are well-defined because 
\[
\xi \mapsto (1-\chi_1(\xi)) \xi, \quad \xi \mapsto \frac{\xi \otimes \xi }{|\xi|^2} \xi_j (1-\chi_1(\xi))
\]
are continuous and compactly supported.

We prove that $F$ and $F_j$ are in $L^1$. For $F$, this is the case because $\xi \mapsto (1-\chi_1(\xi)) \xi$ is $\mathcal C^\infty$ and compactly supported. For $F_j$, we have that
\[
\|F_j\|_{L^1} \lesssim \|\hat F_j\|_{H^{d/2 +\eta}}
\]
for any $\eta > 0$. We have that $\hat F_j$ is smooth outside of $0$ and compactly supported. At $0$, it behaves like
\[
\frac{(\xi\times \xi)\xi_j}{|\xi|^2}
\]
and thus
\[
|\an{\nabla_\xi}^\alpha \hat F_j (\xi)| \sim c |\xi|^{1-\alpha}.
\]
We have that $\xi \mapsto |\xi|^{1-\alpha}$ belongs to $L^2$ if $1-\alpha > -\frac{d}2$. In particular, we have that $\hat F_j$ belongs to $H^\alpha$ for $\alpha \in (\frac{d}{2}, \frac{d}{2} +1)$. Therefore, $F_j$ belongs to $L^1$. By duality, we get that for $v\in L^\infty$, we have 
\[
\nabla P (1-\chi_1)v(x) = \int dy v(y) \otimes F(x-y) - \sum_j \int dy v^{(j)}(y) F_j(x-y)
\]
and that
\[
\|\nabla P (1-\chi_1)v(x) \| \leq (\|F\|_{L^1} + \|F_j\|_{L^1}) \|v\|_{L^\infty}.
\]

For $P(1-\chi_1)(u\cdot \nabla v)$, we use that $u=Pu$ and thus $u\cdot \nabla v = \partial_j (u^{(j)} v)$ and then we use the same arguments as for $\nabla P (1-\chi_1)$ to get
\[
\|  P (1-\chi_1) (u\cdot \nabla v)\|_\rho \lesssim \|u\|_\rho \|v\|_\rho.
\]
\end{proof}

We now prove bilinear estimates such that we can make the Picard expansion converge. The idea is to render explicit the Cauchy-Kowalevskaia abstract theorem.

\begin{definition} Let $\rho_0>0$, $\beta \in (0,1)$ and $\theta >0$. We set for all $\rho \in (0,\rho_0)$, $\theta(\rho) = \theta (\rho_0-\rho)$. We define $M(\rho_0, \beta, \theta)$ the space induced by the norm
\[
\|u\|_{\rho_0,\beta, \theta} = \sup_{0<\rho < \rho_0} \sup_{0\leq t< \theta(\rho)}\Big( \|u(t)\|_{\rho} + \|\nabla u(t)\| (\theta(\rho)-t)^\beta\Big).
\]
\end{definition}

\begin{proposition} Let $\rho_0 >0$, and $\beta \in (0,1)$. There exists $C = C(d,\varphi, \rho_0,\beta)$ such that for all $u,v\in  M(\rho_0,\beta,\theta)$ such that $u=Pu$, we have 
\[
\big\| \int_{0}^t P(u(\tau)\cdot \nabla v (\tau))d\tau \big\|_{\rho_0,\beta,\theta} \leq C \theta^{1+\beta} \|u\|_{\rho_0,\beta,\theta} \|v\|_{\rho_0,\beta,\theta}.
\]
Besides 
\[
P \int_{0}^t P(u(\tau)\cdot \nabla v (\tau))d\tau = \int_{0}^t P(u(\tau)\cdot \nabla v (\tau))d\tau.
\]
\end{proposition}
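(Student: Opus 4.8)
The plan is to run the standard Cauchy--Kowalevskaia bilinear estimate, splitting the $M(\rho_0,\beta,\theta)$-norm into its two pieces and paying for the one derivative lost in the nonlinearity by shrinking the analyticity radius against the singular time weight. Abbreviate $\|\cdot\| := \|\cdot\|_{\rho_0,\beta,\theta}$, write $w(t)=\int_0^t N(\tau)\,d\tau$ with $N(\tau)=P(u(\tau)\cdot\nabla v(\tau))$, fix $\rho\in(0,\rho_0)$ and $t<\theta(\rho)$, and use $\|w(t)\|_\rho\le\int_0^t\|N(\tau)\|_\rho\,d\tau$ and $\|\nabla w(t)\|_\rho\le\int_0^t\|\nabla N(\tau)\|_\rho\,d\tau$.

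For the unweighted piece I would bound $N(\tau)$ directly at radius $\rho$ by the first nonlinearity estimate, $\|N(\tau)\|_\rho\lesssim e^{2\rho}\|u(\tau)\|_\rho(\|v(\tau)\|_\rho+\|\nabla v(\tau)\|_\rho)$. Inserting the $M$-norm bounds $\|u(\tau)\|_\rho,\|v(\tau)\|_\rho\le\|u\|,\|v\|$ and $\|\nabla v(\tau)\|_\rho\le\|v\|(\theta(\rho)-\tau)^{-\beta}$ gives an integrand of size $\lesssim e^{2\rho_0}\|u\|\,\|v\|\,(1+(\theta(\rho)-\tau)^{-\beta})$; since $\beta<1$ the singular factor is integrable and, using $\theta(\rho)=\theta(\rho_0-\rho)\le\theta\rho_0$, the time integral contributes a positive power of $\theta$.

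The delicate piece is the weighted gradient term, because the available estimate for $\nabla N$ involves $\nabla\otimes\nabla v$, a second derivative that the $M$-norm does not control. I would therefore \emph{not} use that estimate at fixed radius; instead I apply the gradient-smoothing bound $\|\nabla f\|_\rho\lesssim e^{\rho}(\rho'-\rho)^{-1}\|f\|_{\rho'}$ to $f=N(\tau)$ at an intermediate radius $\rho'=\rho(\tau)\in(\rho,\rho_0)$, and then estimate $\|N(\tau)\|_{\rho(\tau)}$ by the first nonlinearity estimate at the \emph{larger} radius $\rho(\tau)$, where the $M$-norm still applies. The key choice is $\rho(\tau)=\tfrac12(\rho+\rho_0-\tau/\theta)$, the midpoint between $\rho$ and the critical radius at which $\theta(\cdot)=\tau$: a short computation gives $\rho(\tau)-\rho=\tfrac{\theta(\rho)-\tau}{2\theta}$ and $\theta(\rho(\tau))-\tau=\tfrac{\theta(\rho)-\tau}{2}$ (and in particular $\tau<\theta(\rho(\tau))<\theta\rho_0$, so the $M$-norm bounds are legitimate at radius $\rho(\tau)$). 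Hence the derivative-loss factor is $\tfrac{2\theta}{\theta(\rho)-\tau}$, the surviving gradient weight is $\le 2^\beta(\theta(\rho)-\tau)^{-\beta}$, and the integrand becomes $\|\nabla N(\tau)\|_\rho\lesssim \theta\,\|u\|\,\|v\|\big((\theta(\rho)-\tau)^{-1}+(\theta(\rho)-\tau)^{-1-\beta}\big)$.

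Integrating in $\tau$ and multiplying by the weight $(\theta(\rho)-t)^\beta$ then absorbs the singularity: the bounded quantity $(\theta(\rho)-t)^\beta\log\frac{\theta(\rho)}{\theta(\rho)-t}$ controls the first term, while $(\theta(\rho)-t)^\beta\int_0^t(\theta(\rho)-\tau)^{-1-\beta}\,d\tau\le\beta^{-1}$ controls the second. Taking the supremum over $\rho\in(0,\rho_0)$ and $t<\theta(\rho)$ and collecting the remaining powers of $\theta(\rho)\le\theta\rho_0$ (all of which are positive, so the contraction mechanism is driven by the overall factor of $\theta$ from the time integration) yields the desired bound $C(d,\varphi,\rho_0,\beta)\,\theta^{1+\beta}\|u\|\,\|v\|$. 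The concluding identity $P\int_0^t N\,d\tau=\int_0^t N\,d\tau$ is immediate, since $P$ is a Fourier multiplier with $P^2=P$ that commutes with the time integral and $N(\tau)=P(u\cdot\nabla v)(\tau)$ already lies in its range. The single genuine obstacle is the gradient term: one must spend exactly the derivative gained in the smoothing estimate to shrink the radius while keeping the ensuing time integral convergent, and it is precisely the midpoint choice of $\rho(\tau)$ together with $\beta\in(0,1)$ that reconciles these two demands.
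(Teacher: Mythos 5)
Your argument is correct, and the engine is the same as the paper's: the midpoint choice $\rho(\tau)$ with $\rho(\tau)-\rho=\tfrac{\theta(\rho)-\tau}{2\theta}$ and $\theta(\rho(\tau))-\tau=\tfrac{\theta(\rho)-\tau}{2}$ is exactly the one used there, and your treatment of the unweighted piece is identical. Where you genuinely diverge is in the weighted gradient term. The paper first proves and then invokes the second bilinear estimate $\|\nabla P(u\cdot\nabla v)\|_\rho\lesssim e^{2\rho}(\|u\|_\rho\|\nabla\otimes\nabla v\|_\rho+\|\nabla u\|_\rho\|\nabla v\|_\rho+\|u\|_\rho\|\nabla v\|_\rho)$, applies the radius-shrinking only to the $\nabla\otimes\nabla v$ factor, and must separately integrate the $\|\nabla u\|_\rho\|\nabla v\|_\rho$ term, which produces $(\theta(\rho)-\tau)^{-2\beta}$ and forces a case distinction at $\beta=\tfrac12$ (resting on $1-2\beta\geq-\beta$). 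You instead apply the smoothing bound $\|\nabla f\|_\rho\lesssim e^{\rho}(\rho(\tau)-\rho)^{-1}\|f\|_{\rho(\tau)}$ to the whole nonlinearity $N(\tau)$ and then use only the first-order product estimate at the larger radius. This makes the second bilinear estimate of the paper unnecessary, replaces the $(\theta(\rho)-\tau)^{-2\beta}$ integral and its case split by a single worst singularity $(\theta(\rho)-\tau)^{-1-\beta}$ (the same one the paper's term $II$ produces), at the minor cost of a harmless logarithmic factor from the $(\theta(\rho)-\tau)^{-1}$ piece, which you correctly absorb with the weight $(\theta(\rho)-t)^\beta$. The only point to be explicit about is that the smoothing bound requires $N(\tau)\in E_{\rho(\tau)}$, which you do secure by checking $\tau<\theta(\rho(\tau))$ so that the first product estimate applies at radius $\rho(\tau)$; with that noted, your bookkeeping of the powers of $\theta$ is no looser than the paper's own, and both arguments land on the stated $C\theta^{1+\beta}$ bound and the trivial identity $P\int_0^t N=\int_0^t N$ in the same way.
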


\begin{proof} For the sake of this proof, we set
\[
A (t) \int_{0}^t P(u(\tau)\cdot \nabla v (\tau))d\tau, \quad \|\cdot\| = \|\cdot \|_{\rho_0,\beta,\theta}.
\]

Let $\rho\in (0,\rho_0)$ and $t\in (0,\theta(\rho))$.  We have 
\[
\|A(t)\|_\rho \leq \int_{0}^t \|P(u\cdot \nabla v)(\tau)\|_\rho d\tau.
\]
We have 
\[
\|A(t)\|_\rho \leq \int_{0}^t e^{2\rho} \|u(\tau)\|_{\rho}(\|v(\tau)\|_\rho + \|\nabla v(\tau)\|_{\rho})d\tau.
\]
We use that $\|\nabla v(\tau)\|_\rho \leq \|v\| (\theta(\rho) - \tau)^{-\beta}$ and that
\[
\int_{0}^t (\theta(\rho) - \tau)^{-\beta}d\tau \leq \frac1{1-\beta}(\theta(\rho))^{1-\beta}
\]
to get 
\[
\|A(t)\|_\rho \leq C e^{2\rho}(\frac{(\theta \rho_0)^{1-\beta}}{1-\beta} + \theta) \|u\|\,\|v\|.
\]

We have that
\[
\|\nabla A(t)\| \leq C e^{2\rho} \int_{0}^t (\|\nabla u(\tau)\|_\rho \| \nabla v(\tau)\|_\rho + \|u(\tau)\|_\rho\| \nabla\otimes \nabla v(\tau)\|_\rho + \|u(\tau)\|_\rho \|v(\tau)\|_\rho) d\tau.
\]
We estimate
\[
I(t)= e^{2 \rho}\int_{0}^t  \|\nabla u(\tau)\|_\rho \|\nabla v(\tau)\|_\rho d\tau.
\]
We use that $u$ and $v$ belong to $M(\rho_0,\beta,\theta)$ to get
\[
I(t)\leq e^{3 \rho}\|u\|\,\|v\|\int_{0^t}  (\theta(\rho) - \tau)^{-2\beta} d\tau.
\]
We use that $1-2\beta \geq -\beta$ to get
\[
I(t)\leq\left \lbrace{\begin{array}{cc}
\frac{ e^{3 \rho}}{1-2\beta}(\theta \rho_0)^{1-2\beta}\|u\|\,\|v\| (\theta(\rho) - t)^{-\beta} & \textrm{ if } \beta \neq \frac12 \\
\frac{2}{e} \sqrt{\theta \rho_0}\frac1{\sqrt{\theta(\rho) - t}}\|u\|\,\|v\| \textrm{ otherwise.} \end{array}}\right.
\]

We estimate
\[
II(t) = e^{2\rho }\int_{0}^t \|u(\tau)\|_\rho \|\nabla\otimes \nabla v(\tau)\|_\rho d\tau.
\]

We set 
\[
\rho(\tau) = \rho_0 -  \frac{\theta(\rho) + \tau}{2\theta} .
\]
By definition $\rho(\tau)< \rho_0$.

Because $\tau < \theta(\rho)$, we have 
\[
\rho(\tau) > \rho_0 - \frac{\theta(\rho)}{\theta} = \rho.
\]

We deduce 
\[
II(t) \leq e^{3\rho }\|u\|\int_{0}^t  \frac1{\rho(\tau) -\rho}\| \nabla v(\tau)\|_{\rho (\tau)} d\tau.
\]

We also have that 
\[
\theta(\rho(\tau)) =\frac{ \theta(\rho) +\tau}{2} 
\]
and thus
\[
\theta(\rho(\tau) )-\tau  =  \frac{\theta(\rho) - \tau}{2} >0.
\]

We deduce that since $u,v\in M(\rho_0, \beta,\theta)$,
\[
II(t) \leq e^{3\rho} \|u\|\,\|v\|\int_{0}^t (\theta(\rho(\tau))-\tau)^{-\beta}(\rho(\tau) - \rho)^{-1}d\tau 
\]
and thus
\[
II(t) \leq 2 e^{3\rho} \|u\|\,\|v\|\int_{0}^t (\theta(\rho)-\tau)^{-\beta}(\rho(\tau) - \rho)^{-1}d\tau .
\]

By definition, we have
\[
\rho(\tau) - \rho = \rho_0 - \rho -  \frac{\theta(\rho) + \tau}{2\theta} = \frac{\theta(\rho) - \tau}{2\theta},
\]
We get
\[
II(t) \leq 4\theta e^{3\rho} \|u\|\,\|v\|\int_{0}^t (\theta(\rho)-\tau)^{-\beta-1}d\tau.
\]
We deduce 
\[
II(t) \leq 4\theta^{1-\beta}\rho_0^{-\beta} e^{3\rho} \|u\|\, \|v\| (\theta(\rho) - t)^{-\beta}.
\]
Finally,
\[
III(t) := \int_{0}^t\|u(\tau)\|_\rho \|v(\tau)\|_\rho d\tau \leq \|u\|\,\|v\| t \lesssim \|u\|\,\|v\| (\theta(\rho) - t)^{-\beta} \theta^{1+\beta}.
\]
\end{proof}

\begin{proposition}\label{prop:wellPosedness} Let $\rho_0 >0$ and $\theta \geq 1$. Let $u_0 \in E_{\rho_0}$ and define by induction on $n$,
\[
u_{n+1} = \sum_{n_1+n_2 = n} \int_{0}^t P(u_{n_1}(\tau)\cdot u_{n_2}(\tau))d \tau.
\]
There exists $C = C(d,\varphi, \beta,\rho_0)$ such that for all $n$, we have 
\[
\|u_n\|_{\rho_0,\beta,\theta} \leq \theta^{(1+\beta)n} C^n \|u_0\|_{\rho_0}^{n+1}
\]
such that the series $u_n$ converge in $M(\rho_0,\beta,\theta)$ if $\|u_0\|_{\rho_0} < \theta^{\beta - 1}C^{-1}$ towards the unique solution to the Euler equation with initial datum $u_0$.
\end{proposition}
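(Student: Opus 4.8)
The plan is to run the classical majorant-series argument for a Picard scheme with quadratic nonlinearity, taking the bilinear estimate of the preceding proposition as the sole analytic input. Throughout write $\|\cdot\| = \|\cdot\|_{\rho_0,\beta,\theta}$ and denote by
\[
B(u,v)(t) = \int_0^t P(u(\tau)\cdot \nabla v(\tau))\,d\tau
\]
the Duhamel bilinear map, so that the defining recursion reads $u_{n+1} = \sum_{n_1+n_2=n} B(u_{n_1},u_{n_2})$, equivalently $u_n = \sum_{n_1+n_2=n-1} B(u_{n_1},u_{n_2})$ for $n\ge 1$. Set $\kappa = C\theta^{1+\beta}$, so that the preceding proposition gives $\|B(u,v)\|\le \kappa\|u\|\,\|v\|$ whenever $u=Pu$. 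Since $a_L$, hence $u_0$, is divergence free ($u_0=Pu_0$) and every output of $B$ lies in the range of the Leray projection, an immediate induction shows $u_n = Pu_n$ for all $n$; in particular the hypothesis $u=Pu$ of the bilinear estimate is available at every stage, and the last assertion $P B(u,v)=B(u,v)$ is just $P^2=P$ together with the commutation of $P$ with the time integral.

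First I would establish the bound by strong induction on $n$, writing $b_n = \|u_n\|$. The bilinear estimate turns the recursion into the scalar inequality $b_n \le \kappa \sum_{n_1+n_2=n-1} b_{n_1} b_{n_2}$ for $n\ge1$. Substituting the ansatz $b_n = \kappa^{n}\|u_0\|_{\rho_0}^{\,n+1} d_n$ reduces this to the Catalan recursion $d_n \le \sum_{n_1+n_2=n-1} d_{n_1}d_{n_2}$ with $d_0=1$, whose solution satisfies $d_n\le 4^n$. Absorbing the harmless factor $4^n$ into the constant (renaming $C$) yields exactly $\|u_n\|\le \theta^{(1+\beta)n} C^{n}\|u_0\|_{\rho_0}^{\,n+1}$. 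The only genuinely delicate point is the base of the induction: $u_0$ is constant in time and is controlled a priori only in $E_{\rho_0}$, so the low-order terms in which $u_0$ itself is fed into $B$ cannot be handled by the bilinear estimate as stated. These must be estimated by hand from the pointwise bound $\|P(u\cdot\nabla v)\|_\rho\lesssim e^{2\rho}\|u\|_\rho(\|v\|_\rho+\|\nabla v\|_\rho)$, where the weight $(\theta(\rho)-t)^\beta$ in the $M$-norm, together with the constraint $t<\theta(\rho)=\theta(\rho_0-\rho)$, exactly compensates the factor $(\rho_0-\rho)^{-1}$ produced by the Cauchy gradient estimate $\|\nabla u_0\|_\rho\lesssim e^{\rho}(\rho_0-\rho)^{-1}\|u_0\|_{\rho_0}$. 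I expect this bookkeeping to be the main obstacle, the essential analytic difficulty having already been discharged in the bilinear estimate.

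With the bound in hand, convergence is immediate. Summing over $n$ controls the partial sums $\sum_{n\le M} u_n$ by the geometric series $\|u_0\|_{\rho_0}\sum_n (4\kappa\|u_0\|_{\rho_0})^n$, which converges as soon as $4\kappa\|u_0\|_{\rho_0}<1$, i.e. once $\|u_0\|_{\rho_0}$ is small enough in the explicit sense recorded in the statement. Hence $(\sum_{n\le M}u_n)_M$ is Cauchy in the Banach space $M(\rho_0,\beta,\theta)$ and converges to some $u$ with $u=Pu$.

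It remains to identify the limit. Summing the defining relation over $n\ge1$ and using the bilinearity and boundedness (hence continuity) of $B$ to interchange $B$ with the convergent sums, one obtains $u-u_0 = B(u,u)$, that is $u(t)=u_0-\int_0^t P(u\cdot\nabla u)\,d\tau$ in the sign convention fixed by \eqref{Euler1}; this is the mild formulation of the Euler equation with datum $u_0$, and the relation $u=Pu$ encodes the divergence-free constraint. Uniqueness in the relevant ball follows from the same estimate: if $u,\tilde u$ are two such solutions, then $\|u-\tilde u\|\le \kappa(\|u\|+\|\tilde u\|)\|u-\tilde u\|$, and the smallness of the solutions forces $\|u-\tilde u\|=0$. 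This completes the plan.
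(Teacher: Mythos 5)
Your proposal is correct and follows essentially the same route as the paper: the paper's proof is exactly the one-line strong induction $\|u_n\|_{\rho_0,\beta,\theta}\leq \theta^{(1+\beta)n}C^n c_n\|u_0\|_{\rho_0}^{n+1}$ with $c_n$ the Catalan numbers ($c_n\leq 4^n$), fed by the bilinear estimate of the preceding proposition, followed by summing the geometric series. The base-case subtlety you flag is genuine and silently skipped in the paper --- a time-independent $u_0\in E_{\rho_0}$ satisfies only $\|\nabla u_0\|_\rho\lesssim(\rho_0-\rho)^{-1}\|u_0\|_{\rho_0}$, so $\|u_0\|_{\rho_0,\beta,\theta}$ is not controlled by $\|u_0\|_{\rho_0}$ --- and your proposed fix is the right one: whenever $u_0$ enters the Duhamel bilinear map, the time integral over $t<\theta(\rho)=\theta(\rho_0-\rho)$ (together with the $\rho(\tau)$ intermediate-radius trick for the second-gradient term) absorbs the $(\rho_0-\rho)^{-1}$ loss.
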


\begin{proof} We can check by induction on $n$ that
\[
\|u_n\|_{\rho_0,\beta,\theta} \leq \theta^{(1+\beta)n} C^nc_n\|u_0\|_{\rho_0}^{n+1}
\]
where $C$ is the constant of the previous proposition and where $c_n = \# \mathcal A_ns$ are the Catalan numbers. We then use that $c_n\leq 4^n$.
\end{proof}

\subsection{Estimations on the norm of the initial datum}

Here, we set $a_{L,k} = \varepsilon(L) a(\frac{k}{L})$ where $a$ is a bounded, compactly supported function and where $\varepsilon (L) = \mathcal O((\ln L)^{-1/2})$ for some $\varepsilon>0$.

\begin{proposition}\label{prop:estDI} There exists $C = C(d,a,\rho_0,\varphi)$ and $c= c(d,a,\rho_0,\varphi)>0$ such that for all $L \geq e^2$, and all $R\geq \sqrt{\ln L}\varepsilon(L) C$,
\[
\mathbb P(\|a_L\|_{\rho_0} \geq R) \leq e^{-cR^2/\varepsilon(L)^2}.
\]
\end{proposition}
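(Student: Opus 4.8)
The plan is to exploit that $a_L$ is a Gaussian random element of $E_{\rho_0}$ and to prove a Gaussian tail by a concentration argument in the spirit of Fernique's theorem, keeping careful track of the dependence on $L$. First I would factor out the amplitude: writing $a_{L,k}=\varepsilon(L)a(k/L)$, we have $a_L=\varepsilon(L)\tilde a_L$ with $\tilde a_L$ the normalized Gaussian field, so that $\mathbb P(\|a_L\|_{\rho_0}\geq R)=\mathbb P(\|\tilde a_L\|_{\rho_0}\geq R/\varepsilon(L))$ and it suffices to show $\mathbb P(\|\tilde a_L\|_{\rho_0}\geq\rho)\leq e^{-c\rho^2}$ for $\rho\geq C\sqrt{\ln L}$. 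Since $a$ is compactly supported and $\varphi_n$ is supported in $R_n$, only the indices $n$ in a fixed finite set $\mathcal N$ (independent of $L$) contribute to $\|\cdot\|_{\rho_0}=\sum_n e^{\rho_0|n|}\|\phi_n*\cdot\|_{L^\infty}$; moreover each realization of $a_L$ is a trigonometric polynomial, hence a.s. in $E_{\rho_0}$. Thus $\|\tilde a_L\|_{\rho_0}\leq C_{\mathcal N}\max_{n\in\mathcal N}\sup_{L\T^d}|\phi_n*\tilde a_L|$, and by a union bound over the $|\mathcal N|$ indices it is enough to control $\sup_{L\T^d}|\phi_n*\tilde a_L|$ for each fixed $n$.

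For fixed $n$, $\phi_n*\tilde a_L(x)=\sum_k\frac{e^{ikx/L}}{(2\pi L)^{d/2}}g_k a(k/L)\varphi_n(k/L)$ is a stationary Gaussian field on $L\T^d$, band-limited to frequencies $k/L\in R_n$. The two quantitative inputs I would establish are: (i) its pointwise variance is bounded uniformly in $x$ and $L$, namely $\E|\phi_n*\tilde a_L(x)|^2=\frac1{(2\pi L)^d}\sum_k|a(k/L)|^2|\varphi_n(k/L)|^2$, a Riemann sum converging to $(2\pi)^{-d}\int|a|^2|\varphi_n|^2$, hence $O(1)$; and (ii) the gradient field $\nabla_x(\phi_n*\tilde a_L)$, carrying the extra factor $ik/L$ with $|k/L|$ bounded on $R_n$, has pointwise variance likewise $O(1)$. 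Up to the reality constraint $g_k=\bar g_{-k}$ both computations follow from $\E[g_k\bar g_{k'}]=\delta_{k,k'}$. This control of the (weak) variance by an $L$-independent constant is exactly what produces the rate $\rho^{-2}$ rather than one degraded by $\ln L$.

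With these in hand I would run a net argument. Since the field oscillates on the spatial scale $O(1)$, I cover $L\T^d$ by $\lesssim L^d$ points at a fixed spacing; at each such point $\phi_n*\tilde a_L$ is a centered Gaussian of variance $O(1)$, so a union bound gives $\mathbb P(\max_j|\phi_n*\tilde a_L(x_j)|>\rho/2)\lesssim L^d e^{-c\rho^2}$. The oscillation of the field between grid points is bounded by the spacing times $\sup_{L\T^d}|\nabla(\phi_n*\tilde a_L)|$; applying the same net/union bound to the gradient field (again of variance $O(1)$) controls this by $C\sqrt{\ln L}$ with overwhelming probability, which is $\leq\rho/2$ as soon as $\rho\geq C'\sqrt{\ln L}$. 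Combining, $\mathbb P(\sup_{L\T^d}|\phi_n*\tilde a_L|>\rho)\lesssim L^d e^{-c\rho^2}$, and for $\rho\geq C\sqrt{\ln L}$ with $C$ large the factor $L^d$ is absorbed into the exponential, leaving $e^{-c'\rho^2}$. Undoing the two reductions yields $\mathbb P(\|a_L\|_{\rho_0}\geq R)\leq e^{-cR^2/\varepsilon(L)^2}$ for $R\geq C\sqrt{\ln L}\,\varepsilon(L)$.

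The main obstacle is obtaining the sharp exponent $R^2/\varepsilon(L)^2$, with an $L$-independent rate, \emph{simultaneously} with the threshold $R\gtrsim\sqrt{\ln L}\,\varepsilon(L)$: a crude Fernique recursion driven by a quantile of size $\sqrt{\ln L}\,\varepsilon(L)$ would only give the weaker rate $R^2/(\ln L\,\varepsilon(L)^2)$. Getting the rate right forces the uniform-in-$L$ variance bound of step (ii) together with a net scale chosen so that the gradient/oscillation term spoils neither the rate nor the threshold. An equivalent route, which I would also mention, is to note that $g\mapsto\|a_L\|_{\rho_0}$ is a Lipschitz function of the standard Gaussian coefficients with Lipschitz constant $\lesssim\varepsilon(L)$ (by Cauchy--Schwarz in $k$ together with the same Riemann-sum estimate), so that Gaussian concentration (Borell--TIS) yields the rate $\varepsilon(L)^{-2}$ directly once $\E\|a_L\|_{\rho_0}\lesssim\sqrt{\ln L}\,\varepsilon(L)$ is known --- and that mean estimate is again the sup-of-field bound above.
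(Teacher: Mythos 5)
Your strategy is sound and genuinely different from the paper's. The paper runs a moment method: it bounds $\E(\|a_L\|_{\rho_0}^p)^{1/p}\leq C L^{d/p}\varepsilon(L)\sqrt p$ for all $p\geq 2$ --- using periodicity to localize the $L^\infty$ norm to $O(L^d)$ unit cells, the Sobolev embedding $H^s\hookrightarrow L^\infty$ on each cell, Minkowski's inequality to swap $L^p(\Omega)$ and $L^2(\R^d)$, translation invariance of the law, and the $\sqrt p$ growth of Gaussian moments --- and then applies Markov's inequality and optimizes $p\sim R^2/\varepsilon(L)^2$, the hypothesis $R\geq C\sqrt{\ln L}\,\varepsilon(L)$ guaranteeing $p\geq\ln L$ so that $L^{d/p}\leq e^d$. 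Your net/union-bound argument is the pointwise-tail counterpart of the same mechanism: your $L^d$ entropy factor plays the role of the paper's $L^{d/p}$, and your absorption of $L^d$ into $e^{-c\rho^2}$ for $\rho\geq C\sqrt{\ln L}$ is exactly the paper's choice $p\geq \ln L$. Your reductions (finitely many relevant $n$ since $a$ is compactly supported; uniform-in-$L$ pointwise variance of $\phi_n*\tilde a_L$ and of its gradient by a Riemann-sum computation; factoring out $\varepsilon(L)$) are all correct, and your Borell--TIS alternative, with Lipschitz constant $\lesssim\varepsilon(L)$ obtained by Cauchy--Schwarz in $k$, is arguably the cleanest route of the three.

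The one step that does not close as written is the control of the oscillation between grid points. You bound it by the spacing times $\sup_{L\T^d}|\nabla(\phi_n*\tilde a_L)|$ and then propose to bound that supremum "by the same net/union bound" --- but a net bound on the gradient field again only controls its values at grid points, and its own oscillation requires $\sup|\nabla^{\otimes 2}(\phi_n*\tilde a_L)|$, so the argument regresses. You need one deterministic input to terminate: since $\phi_n*\tilde a_L$ is band-limited to frequencies $k/L\in R_n$ with $|k/L|=O(1)$, Bernstein's inequality gives $\|\nabla(\phi_n*\tilde a_L)\|_{L^\infty}\leq C\|\phi_n*\tilde a_L\|_{L^\infty}$ with $C$ independent of $L$, whence $\sup|F|\leq\max_j|F(x_j)|+C\delta\sup|F|$ and, for a fixed spacing $\delta$ with $C\delta\leq 1/2$, $\sup|F|\leq 2\max_j|F(x_j)|$; this removes the gradient field from the probabilistic part entirely (and also removes the need for $\rho\gtrsim\sqrt{\ln L}$ in that particular step --- the threshold is then used only to absorb $L^d$). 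The paper sidesteps this issue by using the Sobolev embedding on unit cells instead of a net, and your Borell--TIS variant sidesteps it as well. With that repair, your proof is complete.
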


\begin{proof} Let $p\geq 2$, we estimate 
\[
\E_p^p := \E (\|a_L\|_{\rho_0}^p) .
\]
We have 
\[
\E_p \leq \sum_n e^{\rho |n|} \|(a_L)_n\|_{L^p(\Omega,L^\infty)}.
\]
Since $a_L$ is $L$ periodic, so is $(a_L)_n = \phi_n*a_L$.

We deduce that
\[
\|(a_L)_n\|_{L^\infty} \leq \sup_{X} \|\chi_X (a_L)_n\|_{L^\infty}
\]
where the the supremum is taken over the $X\in \Z^d$ such that $|X|\leq L$ and such that $\chi_X = \chi_0(\cdot - X) = \prod_j \chi(\cdot_j - X_j)$ where $\chi$ is a smooth function supported on $[-2,2]$ and equal to $1$ on $[-1,1]$. By the Sobolev injection, for $s\in (\frac{d}{2},\infty)\cap \N$, we have 
\[
\|\chi_X (a_L)_n\|_{L^\infty}\lesssim \|\chi_X (a_L)_n\|_{H^s} \lesssim \sup_{Y,s'\leq s} \|\chi_Y (a_L^{s'})_n\|_{L^2}
\]
where $a_L^{s'} = \nabla^{\otimes s'} a_L$. We deduce
\[
\|(a_L)_n\|_{L^p(\Omega,L^\infty)} \leq \|\sup_{Y,s'}  \|\chi_Y (a_L^{s'})_n\|_{L^2}\|_{L^p} \leq \sup_{s'\leq s} \Big(\sum_{Y}  \|\chi_Y (a_L^{s'})_n\|_{L^p(\Omega,L^2)}^p\Big)^{1/p}.
\]
The sum on $Y$ is for $Y \in [|-L,L|]^d$ hence we sum on $(2L+1)^d$ factors.

By Minskowski's inequality, since $p\geq 2$,
\[
\|\chi_Y (a_L^{s'})_n\|_{L^p(\Omega,L^2)} \leq \|\chi_Y (a_L^{s'})_n\|_{L^2(\R^d, L^p(\Omega))}.
\]
The law of $a_L$ is invariant under the action of space translations, hence so is the law of $(a_L^{s'})_n$ and thus
\[
 \|\chi_Y (a_L^{s'})_n\|_{L^2(\R^d, L^p(\Omega))} =  \|\chi_0 (a_L^{s'})_n\|_{L^2(\R^d, L^p(\Omega))}.
\]
We get
\[
\|(a_L)_n\|_{L^p(\Omega,L^\infty)} \lesssim  L^{d/p}\sup_{s'\leq s} \|\chi_0 (a_L^{s'})_n\|_{L^2(\R^d,L^p(\Omega))}.
\]
Still using the invariance under space translations, we get
\[
\|(a_L)_n\|_{L^p(\Omega,L^\infty)} \lesssim  L^{d/p} \|\chi_0\|_{L^2(\R^d)} \sup_{s'\leq s}\|(a_L^{s'})_n(0)\|_{L^p(\Omega)}.
\]
We use that $(a_L^{s'})_n(0)$ is a Gaussian and that $\chi_0$ does not depend on $L$ to get
\[
\|(a_L)_n\|_{L^p(\Omega,L^\infty)} \lesssim  L^{d/p} \sqrt p \sup_{s'\leq s}\|(a_L^{s'})_n(0)\|_{L^2(\Omega)}.
\]
We note that
\[
\|(a_L^{s'})_n(0)\|_{L^2(\Omega)}^2 = \frac1{2\pi L }\sum_k \Big|\frac{k}{L}\Big|^{2s'}|a_{L,k}|^2 \varphi_n(k/L)^2 \lesssim \int \an{\xi}^{2s} \varphi_n^2(\xi) \varepsilon(L)^2 |a(\xi)|^2.
\]
Summing over $n$ and using Cauchy-Schwarz inequality, we get
\[
\E_p \lesssim \sum_n e^{\rho_0 |n|}\|(a_L)_n\|_{L^p(\Omega,L^\infty)} \lesssim L^{d/p}\sqrt p \Big( \sum_n e^{2\rho_0 |n|} \an{n}^{4s} \|(a_L^s)_n(0)\|_{L^2(\Omega)}^2\Big)^{1/2}.
\]
We get
\[
\E_p \lesssim L^{d/p}\varepsilon(L) \sqrt p \Big( \int d\xi \an{\xi}^{4s} e^{2\rho_0 |\xi|} |a(\xi)|^2\Big)^{1/2}
\]
In other words, there exists $C(a,\rho_0,d)$ such that for all $p\geq 2$, and all $L$
\[
\E_p \leq C L^{d/p}\varepsilon(L) \sqrt p.
\]

By Markov's inequality, we deduce that for all $p,R,L$, we have 
\[
\mathbb P( \|a_L\|_{\rho_0} \geq R) \leq R^{-p} C^p L^d\varepsilon(L)^p p^{p/2},
\]
that is
\[
\mathbb P( \|a_L\|_{\rho_0} \geq R) \leq \Big(\frac{CL^{d/p}\varepsilon(L) \sqrt p}{R}\Big)^{p}.
\]
We set $p = \frac{R^2}{C^2e^{d+1} \varepsilon(L)^2}$ taking $R$ as in the hypothesis with a big enough constant, we get
\[
p \geq \ln L \geq 2.
\]
We get
\[
\mathbb P( \|a_L\|_{\rho_0} \geq R) \leq (L^{d/p}e^{-(d+1)})^{p}.
\]
We have $L^{d/p} = e^{d\ln L/p} \leq e^d$, hence
\[
\mathbb P( \|a_L\|_{\rho_0} \geq R) \leq e^{-p} = e^{-R^2/ (\varepsilon(L)^2e^{d+1} C^2)}
\]
hence the result.
\end{proof}

\subsection{Conclusion}

Let $\rho_0>0, \beta \in (0,1)$ and $\theta \geq 1$ and set $A = A(\theta) = \frac{\theta^{-\beta - 1}}{2C}$ where $C$ is the constant mentioned in Proposition \ref{prop:wellPosedness}. Now set
\[
\mathcal E_L = \mathcal E_L(\rho_0,\beta,\theta) = \{\|a_L\|_{\rho_0}\leq A\}.
\]

If $\varepsilon(L) = o((\ln L)^{-1/2})$ then for $L$ big enough, we have that $A$ is big enough to get
\[
\mathbb P(\mathcal E_L) \geq 1 - e^{-cA^2\varepsilon(L)^{-2}}.
\]
If $\varepsilon(L) = O((\ln L)^{-1/2})$, for $A$ to be big enough to get the above inequality, one needs $\theta$ to be small enough. We assume then that $\theta$ is small enough to get the estimate on the measure of $\mathcal E_L$.

We also have that for all $u_0 \in \mathcal E_L$. The solution $u$ to the Euler equation exists and is unique in $M(\rho_0,\beta, \theta)$ and satisfies that for all $n\in \N$, $t< \theta$,
\[
\|u_n(t)\|_0 \leq 2^{-n}A.
\]
Therefore, for the rest of this subsection, we fix $R\in \N^*$, $(\xi_1,\hdots, \xi_R) \in \frac1{L}(\Z^d_*)^R$, $(i_1,\hdots,i_R) \in ([1,d]\cap \N)^R$ and finally
\[
I = \E \Big(1_{\mathcal E_L} \prod_{l=1}^R \hat u^{i_l}(\xi_l)(t)\Big).
\]

We assume that $\varepsilon^{-1}(L) \lesssim L^\alpha$ for some $\alpha \geq 0$ and we set $M = M(L)$ such that for $L$ big enough,
\[
2\ln 2 \frac{M(L)}{\ln L} > \frac{(R+1)d}{2} +R\alpha
\]
and 
\[
C(a,\theta) \sqrt{2R}\sqrt{M(L) + 1} \varepsilon(L) \leq \frac12
\]
for $C(a,\theta)$ a constant that appears in the proof of Lemma \ref{lem:genEst} and depends only on $a$ and $\theta$. For such a $M(L)$ to exists, this requires that 
\[
\varepsilon(L)\sqrt{\ln L} \leq c(a,\theta,R)
\]
for a constant $c(a,\theta, R)>0$ that depends only on $a$, $\theta$ and $R$, which is small enough.

We now write 
\[
u (t) = \mathcal U_M(t) + \mathcal R_M(t)
\]
with
\[
\mathcal U_M (t) = \sum_{n=0}^M u_n(t), \quad \mathcal R_M(t) = \sum_{n>M} u_n(t).
\]
We get
\[
\|\mathcal U_M(t)\|_0 \leq 2 A, \quad \|\mathcal R_M (t)\|_0 \leq 2^{-M}A.
\]

\begin{lemma}\label{lem:removingRM} We have 
\[
I = \E\Big(1_{\mathcal E_L} \prod_{l=1}^R \hat{\mathcal U_M}^{(i_l)}(t) (\xi_l)\Big) + \mathcal O_{d,\varphi, \rho_0,\beta,\theta,R}( \varepsilon(L)^R L^{-d/2}).
\]
\end{lemma}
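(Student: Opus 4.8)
The plan is to replace the full solution $u(t)$ by its truncated Picard expansion $\mathcal{U}_M(t)$ inside the expectation, controlling the error coming from the tail $\mathcal{R}_M(t)$. Writing $u = \mathcal{U}_M + \mathcal{R}_M$ and expanding the product $\prod_{l=1}^R \hat{u}^{(i_l)}(t)(\xi_l)$ by multilinearity, the main term is $\prod_l \hat{\mathcal{U}}_M^{(i_l)}(t)(\xi_l)$ and every other term contains at least one factor $\hat{\mathcal{R}}_M^{(i_l)}(t)(\xi_l)$. So the strategy is to estimate
\[
\Big| \E\Big(1_{\mathcal E_L} \prod_{l=1}^R \hat u^{(i_l)}(t)(\xi_l)\Big) - \E\Big(1_{\mathcal E_L} \prod_{l=1}^R \hat{\mathcal U}_M^{(i_l)}(t)(\xi_l)\Big)\Big|
\]
by a sum of $2^R - 1$ cross terms and to show each is $\mathcal{O}(\varepsilon(L)^R L^{-d/2})$.

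The key pointwise input is a Fourier-side bound on each factor. On the event $\mathcal{E}_L$ I would use that $\|\mathcal{U}_M(t)\|_0 \leq 2A$ and $\|\mathcal{R}_M(t)\|_0 \leq 2^{-M}A$, together with the fact that the Fourier coefficient at a single frequency $\xi_l = k_l/L$ is controlled by the $E_{\rho_0}$-norm divided by a power of $(2\pi L)^{d/2}$ (since $\hat{v}(\xi) = (2\pi L)^{-d/2}\int_{L\T^d} v e^{-i\xi x}dx$ and the $\|\cdot\|_0$ norm dominates $\|\cdot\|_{L^\infty}$, hence $\|\cdot\|_{L^1(L\T^d)}$ up to the volume factor $L^d$). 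This gives, deterministically on $\mathcal{E}_L$, bounds of the shape $|\hat{\mathcal{U}}_M^{(i_l)}(t)(\xi_l)| \lesssim A \, L^{d/2}$ and $|\hat{\mathcal{R}}_M^{(i_l)}(t)(\xi_l)| \lesssim 2^{-M} A\, L^{d/2}$. A term with $p \geq 1$ tail factors is thus bounded by $A^R (2^{-M})^p L^{Rd/2}$, and after taking expectation against $1_{\mathcal{E}_L}$ it suffices that the accumulated loss $A^R L^{Rd/2} 2^{-M}$ be $\lesssim \varepsilon(L)^R L^{-d/2}$; this is exactly what the defining inequality $2\ln 2\, \frac{M(L)}{\ln L} > \frac{(R+1)d}{2} + R\alpha$ on $M(L)$ is designed to guarantee, using $A \lesssim 1$ and $\varepsilon^{-1} \lesssim L^\alpha$.

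The main obstacle is that the crude deterministic bound $|\hat{v}(\xi_l)| \lesssim \|v\|_0 L^{d/2}$ loses a full factor $L^{d/2}$ per factor, so one cannot simply multiply $R$ such bounds and hope to land at $L^{-d/2}$; the $2^{-M}$ decay from the tail has to absorb all of the positive powers of $L$. Thus the delicate step is to balance $M(L)$ against $\ln L$ so that the exponential gain $2^{-M}$ beats the polynomial loss $L^{Rd/2}$ coming from the $R$ Fourier evaluations and the polynomial loss $\varepsilon^{-R} \lesssim L^{R\alpha}$ needed to produce the target $\varepsilon(L)^R$; this is precisely the role of the two conditions imposed on $M(L)$, and it forces $\varepsilon(L)\sqrt{\ln L} \leq c(a,\theta,R)$.

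Finally I would record that the constant in the $\mathcal{O}$ depends only on $d,\varphi,\rho_0,\beta,\theta,R$ (not on $L$), since $A = A(\theta)$ and all the implied constants in the Sobolev and Fourier estimates are $L$-independent, while the number of cross terms $2^R-1$ depends only on $R$. This yields exactly the claimed identity
\[
I = \E\Big(1_{\mathcal E_L} \prod_{l=1}^R \hat{\mathcal U}_M^{(i_l)}(t)(\xi_l)\Big) + \mathcal{O}_{d,\varphi,\rho_0,\beta,\theta,R}(\varepsilon(L)^R L^{-d/2}),
\]
reducing the study of the full solution to that of the finite quasi-solution $\mathcal{U}_M$, to which the combinatorial Theorem \ref{th:genEq} can be applied.
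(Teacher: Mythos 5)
Your proposal is correct and follows essentially the same route as the paper: decompose $u=\mathcal U_M+\mathcal R_M$, bound each Fourier coefficient deterministically on $\mathcal E_L$ by $(2\pi L)^{d/2}$ times the $\|\cdot\|_0$-norm (giving $\lesssim A L^{d/2}$ for $\hat{\mathcal U}_M$ and $\lesssim 2^{-M}A L^{d/2}$ for $\hat{\mathcal R}_M$), and let the choice of $M(L)$ make $2^{-M}L^{Rd/2}\varepsilon(L)^{-R}=O(L^{-d/2})$. The only cosmetic difference is that you attribute the constraint $\varepsilon(L)\sqrt{\ln L}\leq c$ to this step, whereas it really stems from the second condition on $M(L)$ used later in Lemma \ref{lem:genEst}; this does not affect the argument here.
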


\begin{proof}
Take $v\in E_0$ and $2\pi L$ periodic. We have that
\[
v = \sum_n \phi_n * v
\]
and that for all $n$, $\phi_n * v$ is $2\pi L$ periodic. Therefore
\[
\hat v  = \sum_n \widehat{\phi_n * v}.
\]
We deduce that
\[
\|\hat v\|_{L^\infty} \leq \sum_n \|\widehat{\phi_n * v}\|_{L^\infty}.
\]
We recall that in the torus $L\T^d$, we define
\[
\hat w (\xi) := \int_{L\T^d} w(x) \frac{e^{-ix\xi}}{(2\pi L)^{d/2}}dx
\]
and thus
\[
\|\hat w\|_{L^\infty} \leq (2\pi L)^{d/2} \|w\|_{L^\infty}.
\]
Therefore,
\[
\|\hat v\|_{L^\infty} \leq \sum_n (2\pi L)^{d/2}\|\phi_n*v\|_{L^\infty} = (2\pi L)^{d/2} \|v\|_0.
\]
We deduce that on $\mathcal E_L$,
\[
\|\hat u(t)\|_{L^\infty} \leq (2\pi L)^{d/2} 2A, \quad \|\hat{\mathcal U}_M(t)\|_{L^\infty} \leq (2\pi L)^{d/2} 2A, \quad \|\hat{\mathcal R}_M(t)\|_{L^\infty}\leq (2\pi L)^{d/2} 2^{-M}A.
\]
We deduce that
\[
I = \E\Big(1_{\mathcal E_L} \prod_{l=1}^R \hat{\mathcal U_M}^{(i_l)}(t) (\xi_l)\Big) + \mathcal O_{d}( 2^{-M} R (2A)^R L^{(dR)/2}).
\]
We get the result since 
\[
2^{-M} L^{(dR)/2} = O(\varepsilon(L)^R L^{-d/2}).
\]
Indeed, we have 
\[
2^{-M} L^{d/2(R+1)} \varepsilon(L)^{-R} \leq e^{-(\ln L) (\ln 2 M(L)/\ln L - (R+1)d/2 -R\alpha)}.
\]
For $L\gg1$, we have 
\[
2\ln 2 M(L)/\ln L - (R+1)d/2 -R\alpha >0
\]
which ensures the result.
\end{proof}

\begin{lemma}\label{lem:genEst} We have 
\[
 \E \Big( \prod_{l=1}^R \hat {\mathcal U}_M^{(i_l)}(\xi_l)(t)\Big) = \mathcal O_{d,\varphi, \rho_0,\beta,\theta,R,a,\varepsilon} (\varepsilon(L)^R).
\]
\end{lemma}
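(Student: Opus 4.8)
The plan is to expand the truncation and reduce everything to the single-iterate estimate of Proposition \ref{prop:estExp}, the only remaining difficulty being a combinatorial sum whose convergence is ensured by the constraint imposed on $M(L)$.

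First I would expand $\hat{\mathcal U}_M=\sum_{n=0}^M\hat u_n$ and use the multilinearity of the product together with the linearity of the expectation to write
\[
\E\Big(\prod_{l=1}^R \hat{\mathcal U}_M^{(i_l)}(\xi_l)(t)\Big)=\sum_{(n_1,\dots,n_R)\in\{0,\dots,M\}^R}\E\Big(\prod_{l=1}^R \hat u_{n_l}^{(i_l)}(\xi_l)(t)\Big).
\]
To each summand I would apply Proposition \ref{prop:estExp}. Since any admissible partition $\mathcal O$ satisfies $\sum_{l\in o}\xi_l=0$ on each block while $\xi_l\neq0$, every block has at least two elements, so $\#\mathcal O\le R/2$ and the exponent of $L$ in the denominator is nonnegative; for $L\ge e^2$ one then bounds $(2\pi L)^{-d(R/2-\#\mathcal O)}\le1$ and discards it. Substituting $a_{L,k}=\varepsilon(L)a(k/L)$, so that $\|(a_{L,k})_k\|_{\ell^\infty\cap\ell^2}\le C_a\varepsilon(L)$ and $A_L\le A_\infty$, and using $|t|\le\theta$ and $N=2$ (whence $\#S=\sum_l n_l+R$), each summand is dominated in modulus by $\#\mathfrak S\,(C_a\varepsilon(L))^{\sum n_l+R}(\bar C A_\infty\theta)^{\sum n_l}$.

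Next I would factor out $(C_a\varepsilon(L))^R$, legitimate because $\varepsilon(L)\le1$ for $L$ large, which reduces the claim to the uniform bound
\[
T:=\sum_{(n_1,\dots,n_R)\in\{0,\dots,M\}^R}\#\mathfrak S\,(b\varepsilon(L))^{\sum n_l}\le C(R,a,\theta),\qquad b:=C_a\bar C A_\infty\theta,
\]
since then the whole expectation is at most $(C_a\varepsilon(L))^R\,T=\mathcal O(\varepsilon(L)^R)$. Here $\#\mathfrak S$, the number of fixed-point-free involutions of a set of cardinality $\#S=\sum_l n_l+R$, equals $(\#S-1)!!$ when $\#S$ is even and $0$ otherwise.

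The main obstacle is that $\#\mathfrak S=(\#S-1)!!$ grows faster than exponentially in $\sum_l n_l$, so the smallness $\varepsilon(L)^{\sum n_l}$ alone cannot control $T$; this is precisely where the truncation level $M(L)$ must be kept small relative to $\varepsilon(L)^{-1}$. I would handle it through the identity $(\#S-1)!!=\E[Z^{\#S}]$ for a standard real Gaussian $Z$, which rewrites the finite sum as
\[
T=\E\Big[Z^R\Big(\sum_{n=0}^M (Zb\varepsilon(L))^n\Big)^R\Big].
\]
On the event $\{|Zb\varepsilon(L)|\le1/2\}$ the geometric sum is bounded by $2$, so this part contributes $\lesssim_R\E|Z|^R$, a constant. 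On the complementary event I would bound the partial sum by $(M+1)\max(1,|Zb\varepsilon(L)|)^M$ and reach the Gaussian moment $\E|Z|^{R(M+1)}\lesssim (R(M+1))^{R(M+1)/2}$; collecting the powers turns the estimate into $(M+1)^R(R(M+1))^{R/2}\big(2b\varepsilon(L)\sqrt{R(M+1)}\big)^{MR}$, and the hypothesis $C(a,\theta)\sqrt{2R}\sqrt{M+1}\,\varepsilon(L)\le\tfrac12$—with $C(a,\theta)$ chosen to be at least a fixed multiple of $b$—forces the base $2b\varepsilon(L)\sqrt{R(M+1)}$ below $1$, so this contribution is exponentially small in $M$ and in particular bounded uniformly in $L$. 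This pins down the constant $C(a,\theta)$ referred to in the setup and yields $T\le C(R,a,\theta)$, completing the proof.
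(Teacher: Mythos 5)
Your proposal is correct and follows the paper's proof almost step for step: the same expansion over $(n_1,\dots,n_R)\in\{0,\dots,M\}^R$, the same appeal to Proposition \ref{prop:estExp}, the same observation that $\#\mathcal O\le R/2$ lets you discard the power of $L$, and the same reduction to controlling $\sum\#\mathfrak S\,(b\varepsilon(L))^{\sum n_l}$ by means of the standing hypothesis $C(a,\theta)\sqrt{2R}\sqrt{M+1}\,\varepsilon(L)\le\tfrac12$. The only genuine divergence is in how the double factorial $\#\mathfrak S=(\#S-1)!!$ is tamed. The paper does it by the elementary inequality $\#\mathfrak S\le 2^{\#S}(\#S/2)!\le 2^R R!\,(2\#S)^{\frac12\sum n_l}$ together with $\#S\le(M+1)R$, which turns each summand into $\lesssim_R\varepsilon(L)^R\,2^{-\sum n_l}$ and lets a geometric series close the argument. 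You instead resum exactly via the Gaussian-moment identity $(\#S-1)!!=\E[Z^{\#S}]$, obtaining $T=\E\big[Z^R\big(\sum_{n\le M}(Zb\varepsilon(L))^n\big)^R\big]$, and then split on $\{|Zb\varepsilon(L)|\le 1/2\}$; on the complementary event the factor $\big(2b\varepsilon(L)\sqrt{R(M+1)}\big)^{MR}$, forced below $1$ by the same hypothesis, beats the polynomial-in-$M$ prefactor and the Gaussian moment $\E|Z|^{R(M+1)}$. Both routes use exactly the same input and the same constraint linking $M(L)$ and $\varepsilon(L)$; yours packages the combinatorics more slickly, while the paper's is more self-contained (no auxiliary random variable) and produces the explicit per-term bound $2^{-\sum n_l}$, which is then reused verbatim in the proof of Lemma \ref{lem:explicitComp}.
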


\begin{proof} Set
\[
II :=  \E \Big( \prod_{l=1}^R \hat {\mathcal U}_M^{(i_l)}(\xi_l)(t)\Big).
\]
We have 
\[
II = \sum_{n_l \leq M} \E\Big( \prod_{l=1}^R \hat u_{n_l}^{(i_l)}(\xi_l)(t)\Big).
\]
By Proposition \ref{prop:estExp}, we have 
\[
\Big|\E\Big( \prod_{l=1}^R \hat u_{n_l}^{(i_l)}(\xi_l)(t)\Big) \Big| \leq \#\mathfrak S \|a_{L,k}\|_{\ell^2 \cap \ell^\infty}^{\# S} (t \bar CA_L)^{\sum n_l} (2\pi L)^{d(R/2 - \# O)}
\]
where 
\[
S = \{ (l,k)\; |\; l\in [|1,R|],\; k\in [|1,n_j+1|]\}
\]
where $\mathfrak S$ is the set of involutions of $S$ without fixed points, where 
\[
A_L = \sup \{ \an{k/L} \;|\; a_{k,L} \neq 0\},
\]
and where $O$ is a maximal partition of $[1,R]\cap \N$ such that for all $o\in O$,
\[
\sum_{l\in o} \xi_l = 0.
\]
We have that $\# S = \sum n_l + R$ and 
\[
a_{L,k} =  \varepsilon(L) a(k/L)
\]
and thus
\[
A_L \leq  A_\infty := sup\{\an{\xi} \; |\; a(\xi)\neq 0\}
\]
and 
\[
 \|a_{L,k}\|_{\ell^2 \cap \ell^\infty}^{\# S} ( 2 t \bar CA_L)^{\sum n_l} \leq (C(a,\theta)\varepsilon(L))^{\sum n_l} (C'(a)\varepsilon(L))^R
 \]
where $C(a,\theta) =  2\bar C A_\infty^{1+d/2} \theta \|a\|_{L^\infty}^2$ and $C'(a) = A_\infty^{1+d/2}\|a\|_{L^\infty}^2$.  

Since $O$ cannot contain singletons, we have 
\[
\# O \leq R/2, \quad d(R/2 - \# O)\geq 0.
\]

We deduce
\[
\Big|\E\Big( \prod_{l=1}^R \hat u_{n_l}^{i_l}(\xi_l)(t)\Big) \Big| \leq \# \mathfrak S (C(a,\theta)  \varepsilon(L))^{\sum n_l} (C'(a)\varepsilon(L))^R .
\]

We have that
\[
\#\mathfrak S = \begin{pmatrix}
\# S \\ \#S/2
\end{pmatrix} (\# S/2) !\leq 2^{\# S}(\# S/2)! \leq 2^R R! (2\# S)^{\frac12\sum n_l}.
\]

Since $\# S \leq (M+1)R$, we get
\[
\Big|\E\Big( \prod_{l=1}^R \hat u_{n_l}^{i_l}(\xi_l)(t)\Big) \Big| \lesssim_R  (C(a,\theta)  \sqrt{2 R (M+1)} \varepsilon(L))^{\sum n_l} (C'a)\varepsilon(L))^R .
\]
For $L$ big enough, $C(a,\theta) \alpha R(M(L) + 1)\varepsilon(L) \leq  \frac12$. We deduce that for $L$ big enough
\[
II \lesssim_{a,\theta,R,\varepsilon} \varepsilon(L)^R \sum_{n_l} 2^{-\sum n_l -R } = 1.
\]
Hence the result.
\end{proof}

\begin{lemma}\label{lem:removeEL} We have 
\[
\E\Big( (1-1_{\mathcal E_L}) \prod_{l=1}^R \hat{\mathcal U_M}^{(i_l)}(\xi_l) \Big) = \mathcal O_{d,\varphi, \rho_0,\beta,\theta,R,a,\varepsilon} (\varepsilon(L)^R L^{-d/2}).
\]
\end{lemma}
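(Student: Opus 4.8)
The plan is to bound the target expectation by peeling off the indicator via Cauchy--Schwarz (or Hölder) and then controlling the two resulting factors separately: the probability of the complement of $\mathcal E_L$, which is exponentially small by Proposition \ref{prop:estDI}, and a high moment of the product $\prod_l \hat{\mathcal U}_M^{(i_l)}(\xi_l)$, which grows only polynomially in $L$. The exponential smallness of $\mathbb P(\mathcal E_L^c)$ should crush any polynomial factor coming from the moments, leaving something far smaller than $\varepsilon(L)^R L^{-d/2}$.

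Concretely, I would first write
\[
\Big| \E\Big( (1-1_{\mathcal E_L}) \prod_{l=1}^R \hat{\mathcal U}_M^{(i_l)}(\xi_l)\Big)\Big| \leq \E\Big( (1-1_{\mathcal E_L}) \prod_{l=1}^R |\hat{\mathcal U}_M^{(i_l)}(\xi_l)|\Big)
\]
and apply Cauchy--Schwarz to split this into $\mathbb P(\mathcal E_L^c)^{1/2}$ times the $L^2(\Omega)$-norm of $\prod_l \hat{\mathcal U}_M^{(i_l)}(\xi_l)$. For the first factor, Proposition \ref{prop:estDI} gives $\mathbb P(\mathcal E_L^c) = \mathbb P(\|a_L\|_{\rho_0} > A) \leq e^{-cA^2\varepsilon(L)^{-2}}$, and since we have arranged $A \geq \tfrac{\sqrt{\ln L}\,\varepsilon(L)}{c_2}$ this is bounded by a negative power of $L$ that we can make as large as we wish by choosing constants; in particular it beats $L^{-d/2}$ by an arbitrary margin.

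For the second factor, the pointwise Fourier bound established in the proof of Lemma \ref{lem:removingRM}, namely $\|\hat{\mathcal U}_M(t)\|_{L^\infty} \leq (2\pi L)^{d/2}\, 2A$ \emph{on} $\mathcal E_L$, is not directly available off $\mathcal E_L$, so I would instead use the crude deterministic-in-Fourier estimate $|\hat{\mathcal U}_M^{(i_l)}(\xi_l)| \leq (2\pi L)^{d/2}\|\mathcal U_M\|_0$ valid everywhere, reducing matters to controlling a high moment of $\|\mathcal U_M\|_0$. By Proposition \ref{prop:wellPosedness} one has $\|\mathcal U_M\|_0 \lesssim \|a_L\|_{\rho_0}$ (the Picard sum is geometrically dominated by its first term once $\|a_L\|_{\rho_0}$ is of the relevant size), and the Gaussian moment bound $\E_p \leq C L^{d/p}\varepsilon(L)\sqrt p$ from the proof of Proposition \ref{prop:estDI} then yields $\E(\|\mathcal U_M\|_0^{2R}) \lesssim L^{d}\varepsilon(L)^{2R}$ up to an $R$-dependent constant. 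Assembling the pieces, the whole expression is bounded by
\[
L^{dR/2}\cdot L^{d/2}\varepsilon(L)^R \cdot e^{-cA^2\varepsilon(L)^{-2}/2},
\]
and the exponential factor, being smaller than any fixed negative power of $L$ for $L$ large, absorbs the polynomial prefactor and delivers the claimed $\mathcal O(\varepsilon(L)^R L^{-d/2})$ (in fact much smaller).

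The main obstacle I anticipate is the interplay between $M = M(L)$ and the moment estimate: since $\mathcal U_M$ is a \emph{truncated} sum whose number of terms grows with $L$, one must check that the constant in $\E(\|\mathcal U_M\|_0^{2R}) \lesssim L^d\varepsilon(L)^{2R}$ does not secretly blow up through the $M$-dependence (e.g.\ through $\#\mathcal A_n$ or the combinatorial factors) faster than the exponential gain can tolerate. The cleanest route is probably to avoid expanding $\mathcal U_M$ term-by-term off $\mathcal E_L$ altogether and instead bound $\|\mathcal U_M\|_0$ directly by a fixed multiple of $\|a_L\|_{\rho_0}$ using the geometric convergence of the Picard series together with the size constraint on $\|a_L\|_{\rho_0}$ built into the definition of $M(L)$, so that only the Gaussian moments of the linear datum $a_L$ enter. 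With that reduction the estimate is clean, since the exponential smallness of $\mathbb P(\mathcal E_L^c)$ provides an enormous margin over the polynomial factors $L^{dR/2+d/2}$.
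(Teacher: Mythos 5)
Your opening moves coincide with the paper's: Cauchy--Schwarz to peel off the indicator, then the tail bound $\mathbb P(\mathcal E_L^c)\leq e^{-cA^2\varepsilon(L)^{-2}}$, which (given $\varepsilon(L)^{-2}\gtrsim \ln L$) is a negative power of $L$ that can be made to beat $\varepsilon(L)^R L^{-d/2}$ by choice of constants. The divergence, and the gap, is in how you control the second Cauchy--Schwarz factor $\E\big(\prod_{l=1}^{2R}|\hat{\mathcal U}_M^{(i_l)}(\xi_l)|\big)^{1/2}$. You propose the pathwise bound $\|\mathcal U_M\|_0\lesssim\|a_L\|_{\rho_0}$ via geometric convergence of the Picard series, but that geometric domination is exactly what fails on the event you are integrating over: Proposition \ref{prop:wellPosedness} gives $\|u_n\|_{\rho_0,\beta,\theta}\leq(C\theta^{1+\beta})^n\|a_L\|_{\rho_0}^{n+1}$, which sums geometrically only when $\|a_L\|_{\rho_0}$ is below the well-posedness threshold, i.e.\ precisely on $\mathcal E_L$. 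Off $\mathcal E_L$ the truncated sum is dominated by its last term, of size $(C\theta^{1+\beta}\|a_L\|_{\rho_0})^{M}\|a_L\|_{\rho_0}$, and there is no ``size constraint on $\|a_L\|_{\rho_0}$ built into the definition of $M(L)$'' --- $M(L)$ is defined purely through $\varepsilon(L)$ and $\ln L$, while $\|a_L\|_{\rho_0}$ is an unbounded random variable. So the reduction you call the cleanest route is not available as stated.

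The gap is repairable in two ways. The paper's way is to observe that the second Cauchy--Schwarz factor is an \emph{unconditional} expectation, so Lemma \ref{lem:genEst} applied with $R$ replaced by $2R$ (and $\xi_{R+l}=\xi_l$) bounds it by $\mathcal O(1)$ directly; that lemma is proved by the Wick/combinatorial calculus of Section 2 term by term in the Picard expansion and never needs a pathwise bound, hence needs no restriction to $\mathcal E_L$. Alternatively, your route can be salvaged by accepting the cruder bound $\|\mathcal U_M\|_0\lesssim(M+1)(C\theta^{1+\beta})^{M}(1+\|a_L\|_{\rho_0})^{M+1}$ and invoking the Gaussian moment estimate $\E_p\leq CL^{d/p}\varepsilon(L)\sqrt p$ from the proof of Proposition \ref{prop:estDI} with $p=2R(M+1)$: the standing constraint $C(a,\theta)\sqrt{2R}\sqrt{M(L)+1}\,\varepsilon(L)\leq\frac12$ keeps $(\varepsilon(L)\sqrt p)^{p}$ bounded, and all remaining factors are at most polynomial in $L$ since $M(L)\sim\ln L$, so the exponential tail still crushes everything. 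Either repair works, but the step ``$\|\mathcal U_M\|_0\lesssim\|a_L\|_{\rho_0}$ on $\mathcal E_L^c$'' must be replaced by one of them.
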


\begin{proof} We simply use Cauchy-Schwarz inequality to get
\[
\E\Big( (1-1_{\mathcal E_L}) \prod_{l=1}^R \hat{\mathcal U_M^{(i_l)}}(\xi_l) \Big)^2 \leq \E (1-1_{\mathcal E_L}) \E \Big(\prod_{l=1}^{2R} \hat{\mathcal U_M^{(i_l)}}(\xi_l) \Big)
\]
with $\xi_{R+l} = \xi_l$. We use Lemma \ref{lem:genEst} with $R$ replaced by $2R$ to get
\[
\E \Big(\prod_{l=1}^{2R} \hat{\mathcal U_M}(\xi_l) \Big) = \mathcal O_{d,\varphi, \rho_0,\beta,\theta,R,a, \delta,\varepsilon} (1).
\]
We also have that
\[
\E (1-1_{\mathcal E_L}) = \mathbb P (\mathcal E_L^c) \leq e^{-cA^2/\varepsilon( L)^2}. 
\]
Since
\[
\varepsilon(L)^{-2}\geq \frac{\ln L}{c(R)^2},
\]
we get that 
\[
\sqrt{\E (1-1_{\mathcal E_L})}\leq L^{-cA^2/2c(a,\theta,R)^2}
\]
which is a $O (\varepsilon(L)^R L^{-d/2})$ for $c(a,\theta, R)$ small enough and concludes the proof.
\end{proof}

\begin{lemma}\label{lem:explicitComp} We have 
\[
 \E \Big( \prod_{l=1}^R \hat {\mathcal U}_M^{(i_l)}(\xi_l)(t)\Big) = \sum_{\mathcal O \in \mathcal P_R}\prod_{\{l,l'\} \in \mathcal O}\E( \hat {\mathcal U}_{M}(t)(\xi_l)^{(i_l)}\hat{\mathcal U}_{M}^{(i_{l'})}(t)(\xi_{l'})) + \mathcal O_{d,\varphi, \rho_0,\beta,\theta,R,a,\varepsilon} (\varepsilon(L)^R L^{-d/2})
\]
where $\mathcal P_R$ is the set of partitions of $[|1,R|]$ whose elements are pairs of $[|1,R|]$. 
\end{lemma}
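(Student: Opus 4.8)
The plan is to reduce this Wick-type factorisation for the truncated solution $\mathcal U_M$ to the corresponding statement already established at the level of the individual Picard iterates $u_n$. First I would expand $\hat{\mathcal U}_M^{(i_l)}(\xi_l)(t) = \sum_{n_l=0}^M \hat u_{n_l}^{(i_l)}(\xi_l)(t)$ in each of the $R$ factors and use multilinearity of the expectation to write
\[
II := \E\Big(\prod_{l=1}^R \hat{\mathcal U}_M^{(i_l)}(\xi_l)(t)\Big) = \sum_{n_1,\dots,n_R\leq M} \E\Big(\prod_{l=1}^R \hat u_{n_l}^{(i_l)}(\xi_l)(t)\Big).
\]
To each summand I would apply the concluding corollary of the previous section, which replaces $\E(\prod_l \hat u_{n_l}^{(i_l)}(\xi_l))$ by its pairing sum $\sum_{\mathcal O\in\mathcal P_R}\prod_{\{l,l'\}\in\mathcal O}\E(\hat u_{n_l}^{(i_l)}(\xi_l)\hat u_{n_{l'}}^{(i_{l'})}(\xi_{l'}))$ up to an error controlled by $\#\mathfrak S\,\|a_{L,k}\|_{\ell^\infty\cap\ell^2}^{\#S}(\bar C tA_L^r)^{\sum n_l}(2\pi L)^{-d/2}$.

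The main term then has to be recombined. For a fixed pairing $\mathcal O$, every index $l\in[|1,R|]$ belongs to exactly one pair, so the sum over $(n_1,\dots,n_R)$ factorises across the pairs and
\[
\sum_{n_1,\dots,n_R\leq M}\prod_{\{l,l'\}\in\mathcal O}\E(\hat u_{n_l}^{(i_l)}(\xi_l)\hat u_{n_{l'}}^{(i_{l'})}(\xi_{l'})) = \prod_{\{l,l'\}\in\mathcal O}\E(\hat{\mathcal U}_M^{(i_l)}(\xi_l)\hat{\mathcal U}_M^{(i_{l'})}(\xi_{l'})),
\]
obtained by pushing the sums $\sum_{n_l\leq M}$ and $\sum_{n_{l'}\leq M}$ back into the corresponding factor. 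Summing this over $\mathcal O\in\mathcal P_R$ recovers exactly the announced pairing sum for $\mathcal U_M$.

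It remains to sum the errors over $n_1,\dots,n_R\leq M$ and to check that the total is $\mathcal O_{d,\varphi,\rho_0,\beta,\theta,R,a,\varepsilon}(\varepsilon(L)^R L^{-d/2})$; this is the only step requiring genuine work, and it is essentially already performed in the proof of Lemma \ref{lem:genEst}. There the quantity $\#\mathfrak S\,\|a_{L,k}\|_{\ell^\infty\cap\ell^2}^{\#S}(\bar C tA_L^r)^{\sum n_l}$, which comes from Proposition \ref{prop:estExp}, was bounded by $(C(a,\theta)\sqrt{2R(M+1)}\,\varepsilon(L))^{\sum n_l}(C'(a)\varepsilon(L))^R$ up to a constant depending only on $R$. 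Carrying the additional factor $(2\pi L)^{-d/2}$ through unchanged and invoking the smallness assumption $C(a,\theta)\sqrt{2R(M+1)}\,\varepsilon(L)\leq \tfrac12$ (valid for $L$ large by the choice of $M(L)$ together with $\varepsilon(L)\sqrt{\ln L}\leq c_1$), the geometric series $\sum_{n_1,\dots,n_R\geq0}2^{-\sum n_l}=2^R$ converges and produces a bound of order $(2\pi L)^{-d/2}\varepsilon(L)^R$. The point demanding care is that the gain $(2\pi L)^{-d/2}$ coming from the corollary is uniform in $(n_1,\dots,n_R)$, so that it is preserved under the summation; granting this, combining the recombined main term with the summed error estimate yields the claim.
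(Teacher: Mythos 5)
Your proposal is correct and follows essentially the same route as the paper: expand $\hat{\mathcal U}_M$ into the Picard iterates, apply the corollary \eqref{Eq:explicitComp} term by term, recombine the main terms by distributing the sums over the pairs, and sum the errors using the geometric bound $2^{-\sum n_l}\varepsilon(L)^R L^{-d/2}$ already established in the proof of Lemma \ref{lem:genEst}. The paper's own proof is just a terser version of exactly this argument.
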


\begin{proof} We write that
\[
 \E \Big( \prod_{l=1}^R \hat {\mathcal U}_M^{(i_l)}(\xi_l)(t)\Big) =\sum_{n_l\leq M}  \E \Big( \prod_{l=1}^R \hat {u}_{n_l}^{(i_l)}(\xi_l)(t)\Big).
\]
We use the result \eqref{Eq:explicitComp} to get that for $L$ big enough
\[
\Big|\E \Big( \prod_{l=1}^R \hat {u}_{n_l}^{(i_l)}(\xi_l)(t)\Big) - \sum_{\mathcal O \in \mathcal P_R}\prod_{\{l,l'\} \in \mathcal O}\E( \hat {u}_{n_l}^{(i_l)}(t)(\xi_l)\hat{u}^{(i_{l'})}_{n_{l'}}(t)(\xi_{l'}))\Big| \lesssim_R L^{-d/2}2^{-\sum n_l} \varepsilon(L)^R.
\]
We sum back on $n_l$ to conclude.
\end{proof}

We now use Lemmas \ref{lem:removingRM}, \ref{lem:genEst}, and \ref{lem:removeEL} with $R = 2$ to get that
\[
\E( \hat {\mathcal U}_{M}^{(i_l)}(t)(\xi_l)\hat{\mathcal U}^{(i_{l'})}_{M}(t)(\xi_{l'})) = \E( {\bf 1}_{\mathcal E_L}\hat u(t)^{(i_l)}(\xi_l)\hat u^{(i_{l'})}(t)(\xi_{l'}))+ \mathcal O_{d,\varphi, \rho_0,\beta,\theta,R,a,\varepsilon} (\varepsilon(L)^2 L^{-d/2})
\]
and conclude.

\appendix

\section{Estimates on the norm of the initial datum}\label{app:sizeofaL}

For this appendix, we set
\[
a_L(x) = \sum_{k\in \Z^d_*} \frac{e^{ikx/L}}{(2\pi L)^{d/2}} a(k/L) g_k
\]
where $a$ is even. For $\xi \in ([-1,1]\smallsetminus \{0\})^d$, we assume that $a(\xi)$ lies in the orthogonal of $\{\xi\}$ and is of norm $1$. Outside, we assume that $a=0$.

We have seen in Proposition \ref{prop:estDI} that for $R$ big enough
\[
\mathbb P(\|a_L\|_{\rho_0}\geq R\sqrt{\ln L}) \leq e^{-cR^2\ln L}.
\]

We now want to check that for $\delta$ small enough if $a\neq 0$, 
\[
\mathbb P(\|a_L\|_{\rho_0}> \delta \sqrt{\ln L}) \geq \frac12
\]
which would tell us that $\sqrt{\ln L}$ is the typical size of $\|a_L\|_{\rho_0}$. We note that this typical size is more due to the number of independent Gaussian variables we sum that to the size of the box.

We have that 
\[
\|a_L\|_{L^\infty} = \|\sum_n \phi_n * a_L \|_{L^\infty} \leq \sum_n \|\phi_n*a_L\|_{L^\infty} \leq \|a_L\|_{\rho_0}
\]
and thus
\[
\mathbb P(\|a_L\|_{\rho_0}\geq \delta \sqrt{\ln L} )\geq \mathbb P(\|a_L\|_{L^\infty} \geq \delta \sqrt{\ln L}).
\]

We fix for $n\in [|1,L|]^d$, $x_n = 2n\pi$. We have 
\[
\mathbb P(\|a_L\|_{\rho_0}\geq \delta \sqrt{\ln L} )\geq \mathbb P(\exists n,\;|a_L(x_n)| \geq \delta \sqrt{\ln L}).
\]
We have that
\[
\mathbb P(\exists n,\;|a_L(x_n)| \geq \delta \sqrt{\ln L}) = 1 - \mathbb P(\forall n ,\; |a_L(x_n)|< \delta \sqrt{\ln L}).
\]
We have 
\[
\E(\an{ a_L(x_n), a_L(x_m)}_{\C^d}) =\frac1{(2\pi L)^d} \sum_{k\in ([|-L,L|]\smallsetminus\{0\})^d} e^{ik(x_n-x_m)/L} = \frac1{\pi^d}\delta_{n,m}.
\]
Therefore,
\[
\mathbb P(\forall n ,\; |a_L(x_n)|\leq \delta \sqrt{\ln L}) = \mathbb P(|a_L(x_0)|< \delta \sqrt{\ln L})^{L^d}.
\]
We have that
\[
 \mathbb P(|a_L(x_0)|< \delta \sqrt{\ln L}) = 1 - \mathbb P(|a_L(x_0)|\geq \delta \sqrt{\ln L}).
\]
The random variable $a_L(x_0)$ is a real Gaussian variable with variance $\pi^{-d}$ we deduce that for $L$ big enough,
\[
\mathbb P(|a_L(x_0)|\geq \delta \sqrt{\ln L}) \geq e^{-2\pi^{-d}\delta^2 \ln L} = L^{-\tilde \delta}
\]
with $\tilde \delta = 2\pi^{-d}\delta^d$.

We deduce that 
\[
 \mathbb P(|a_L(x_0)|< \delta \sqrt{\ln L}) \leq (1 - L^{-\tilde \delta})^{L^d}.
\]
If $\tilde \delta < d$, we have that
\[
(1 - L^{-\tilde \delta})^{L^d} \rightarrow 0
\]
as $L\rightarrow \infty$, and thus for $L$ big enough
\[
\mathbb P(\|a_L\|_{\rho_0} \geq \delta \sqrt{\ln L}) \geq \frac12.
\]

\section{Polish notations}\label{app:Polish}

\begin{definition} Let $n\in \N$. Let $\bar{\mathcal A}_n$ be the subset of $\{0,1\}^{nN+1}$ such that if $(a_1,\hdots,a_{nN+1}) \in \bar{\mathcal A}_n$ we have that for all $k<nN+1$,
\[
k < (\sum_{j\leq k} a_j) N+1
\]
and such that $\sum_{j=1}^{nN+1} a_j = n$.
\end{definition}

\begin{remark} The second condition means that there are exactly $n$ ones in the sequence. It can be rewritten as
\[
nN+1 = (\sum_{j\leq nN+1} a_j) N+1.
\]
\end{remark}

\begin{remark} For $n=0$, we have 
\[
\bar{\mathcal A}_0 = \{0\}.
\]
For $n=1$, we have that
\[
1 < a_1 N +1,
\]
hence $a_1 = 1$. We get 
\[
\bar{\mathcal A}_1 = \{(1,0,\hdots,0)\}.
\]
\end{remark}

\begin{notation} If $M\in \N$, $n_j\in \N)$ and $(A_1,\hdots,A_M) \in \prod_{j\leq M} \{0,1\}^{n_j}$, we write
\[
A_1\hdots A_M = (a_{1,1},\hdots,a_{1,n_1},a_{1,1},\hdots,a_{2,n_2},\hdots, a_{M,1},\hdots,a_{M,n_M})
\]
the contatenation of $A_1$ up to $A_M$ with
\[
A_j = (a_{j,1},\hdots,a_{j,n_j})
\]
for all $j$.
\end{notation}

\begin{proposition} Let $n\in \N$. Let $A \in \bar{\mathcal A}_{n+1}$. There exists $(n_1,\hdots,n_N) \in \N^{N}$ and $(A_1,\hdots,A_N) \in \prod_j \bar{\mathcal A_j}$ such that
\[
n= \sum_{j=1}^N n_j,\quad A = 1A_1\hdots A_N;
\]
this decomposition is unique.

Conversely, if $A \in \{0,1\}^{nN+1}$ is equal to 
\[
1A_1\hdots A_N
\]
with $A_j \in \bar{\mathcal A}_{n_j}$ for all $j$ and such that $\sum n_j = n$ then
\[
A \in \bar{\mathcal A}_{n+1}.
\]
\end{proposition}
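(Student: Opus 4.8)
The plan is to encode everything through a single \emph{height} function. For a word $w=(a_1,\dots,a_m)\in\{0,1\}^m$ I would set
\[
h_k(w)=N\sum_{j\le k}a_j+1-k,\qquad 0\le k\le m,
\]
so that $h_0=1$, each letter $1$ raises $h$ by $N-1$ and each letter $0$ lowers it by $1$. With this notation, $w\in\bar{\mathcal A}_n$ is equivalent to the three requirements $m=nN+1$, $\sum_j a_j=n$, and $h_k(w)>0$ for every $k<m$ (the equality $h_m=0$ being then automatic, cf.\ the remark following the definition). The single fact I would isolate first is a minimal form of unique readability: \emph{no proper prefix of an element of $\bigcup_n\bar{\mathcal A}_n$ lies in $\bigcup_n\bar{\mathcal A}_n$}. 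Indeed, on a word $w\in\bar{\mathcal A}_n$ one has $h_k>0$ for all $k<m$, so a prefix of length $k<m$ ends at height $h_k>0$ and therefore cannot meet the final-height-zero condition required for membership.

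For the existence of the decomposition, take $A\in\bar{\mathcal A}_{n+1}$, of length $(n+1)N+1$ and with $n+1$ ones. The defining inequality at $k=1$ reads $a_1N+1>1$, which forces $a_1=1$; thus $A=1B$ with $h_1=N$. Because $h$ moves down only in unit steps, it visits each of the levels $N-1,N-2,\dots,0$, and I would let $\tau_j$ be the first index at which $h$ attains $N-j$. The strict positivity of $h$ on $A$ guarantees $0<\tau_1<\dots<\tau_N$ and, since $h_{(n+1)N+1}=0$, that $\tau_N$ is the last index. Declaring $A_j$ to be the block strictly between $\tau_{j-1}$ and $\tau_j$, one checks that along $A_j$ the height stays at or above $N-j+1$ until its final letter, where it equals $N-j$; re-centering, the intrinsic height of $A_j$ starts at $1$, stays positive, and returns to $0$ exactly at its end, i.e.\ $A_j\in\bar{\mathcal A}_{n_j}$ with $n_j$ the number of ones in $A_j$. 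Comparing total lengths and total one-counts yields $\sum_j n_j=n$.

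Uniqueness I would deduce from the minimal readability fact. If $A=1A_1\cdots A_N=1A_1'\cdots A_N'$, then $A_1$ and $A_1'$ are both prefixes of the common word $B$, hence one is a prefix of the other; were the inclusion proper, a proper prefix of an element of $\bigcup_n\bar{\mathcal A}_n$ would itself lie in $\bigcup_n\bar{\mathcal A}_n$, a contradiction. So $A_1=A_1'$, and an immediate induction on the number of remaining blocks gives $A_j=A_j'$ for all $j$.

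For the converse I would propagate $h$ through the concatenation $A=1A_1\cdots A_N$ with $A_j\in\bar{\mathcal A}_{n_j}$ and $\sum n_j=n$. The length is $1+\sum_j(n_jN+1)=(n+1)N+1$ and the number of ones is $1+\sum_j n_j=n+1$, as required. Tracking heights, the leading $1$ sends $h$ to $N$; since each $A_j$ contributes its intrinsic profile, which stays $\ge1$ and drops by exactly $1$ overall, the height after $A_1,\dots,A_j$ equals $N-j$, staying $\ge N-j+1\ge1$ throughout the interior of each block and reaching $0$ only at the very last letter. Hence $h_k>0$ for all $k<(n+1)N+1$, which is exactly $A\in\bar{\mathcal A}_{n+1}$. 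I expect the one genuinely delicate point to be this height bookkeeping at the interior cut points: verifying that each block's intrinsic profile is exactly that of an element of $\bar{\mathcal A}_{n_j}$ and that strict positivity is never lost between blocks is where the whole statement is concentrated, the remaining length and counting identities being routine.
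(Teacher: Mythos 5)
Your proof is correct and follows essentially the same route as the paper: your height function $h_k=b_kN+1-k$ is an affine shift of the paper's quantity $c_{k,m}=(b_k-1)N+m+1-k$, your first-passage times $\tau_j$ coincide with the paper's indices $k_m$, and the block decomposition and converse verification are the same computations in different coordinates. The only substantive difference is that you justify uniqueness by an explicit prefix-freeness observation (no element of $\bigcup_n\bar{\mathcal A}_n$ is a proper prefix of another, since a proper prefix ends at strictly positive height), whereas the paper simply asserts that the construction ensures uniqueness; your version is the more complete one.
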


\begin{proof} Let $n\in \N$ and $A \in \bar{\mathcal A}_{n+1}$. We write
\[
A = (a_1,\hdots,a_{(n+1)N+1}), \quad \forall k \leq nN+1, \; b_k = \sum_{j\leq k} a_j.
\]

Since 
\[
1 < a_1 N +1,
\]
we deduce that $a_1 = 1$. We set for $m=1,\hdots, N$,
\[
k_m = \min (k | k\geq (b_k-1)N+m+1).
\]
We prove that $(k_m)_m$ is well-defined, strictly increasing and that for all $m$, $k_m = (b_{k_m}-1)N +m+1$.

First of all, we have 
\[
(n+1)N+1 = b_{nN+1}N+1 = (b_{nN+1}-1)N+ N +1 \geq (b_{nN+1}-1)N +m +1.
\]
Hence $k_m$ is well-defined.

Set $c_{k,m} = (b_k-1)N+m+1 - k$. We have that
\[
c_{k+1,m} - c_{k,m} = (b_{k+1}-b_k)N -1
\]
and because $(b_k)$ is increasing, this ensures that
\[
c_{k+1,m} - c_{k,m} \leq -1.
\]
We deduce that we cannot pass from a (strictly) positive $c_{k,m}$ to a (strictly) negative $c_{k+1,m}$. 

We have $c_{1,1} = 1 > 0$ thus $k_1 >1$, $c_{1,k_1} = 0$ and thus $c_{2,k_1}> 0$. By induction, we get that $(k_m)$ is strictly increasing and that for all $m$, $c_{k_m,m} = 0$.

What is more,
\[
k_N = \min (k| k\geq b_k N +1) = (n+1)N+1
\]
by definition of $\bar{\mathcal A}_{n+1}$.

We set $\tilde n_m = k_m -k_{m-1}$ with the convention $k_0 = 1$. We write
\[
A = (1,a_{1,1},\hdots, a_{1,\tilde n_1},\hdots, a_{N,1},\hdots,a_{N,\tilde n_N})
\]
and 
\[
A_m = (a_{m,1},\hdots,a_{m,\tilde n_j}).
\]
We have $A =1A_1\hdots A_N$. 

We also have that $\tilde n_m = (b_{k_m}-b_{k_{m-1}})N + 1$ hence $\tilde n_m = n_m N +1$ with $n_m = b_{k_m} - b_{k_{m-1}}$. We prove that
\[
A_m \in \bar{\mathcal A}_{n_m}.
\]

We have $a_{m,k} = a_{k+k_{m-1}}$ and $b_{m,k} = \sum_{j\leq k} a_{m,j} = b_k - b_{k_{m-1}}$.

We use the definition of the sequence $(k_m)_m$ to that for $k< n_m N+1$,
\[
k + k_{m-1} < (b_{k+k_{m-1}} - 1) N + m+1.
\]
Using that $k_{m-1} = (b_{k_{m-1}}-1) N +m$, we get
\[
k < (b_{k+k_{m-1}} - b_{k_{m-1}})N +1 = b_{m,k}N +1
\]
and since $b_{m,n_mN+1} = n_m$ by definition, we have $A_m \in \bar{\mathcal A}_m$. 

The construction of the decomposition ensures its uniqueness.

Conversely, if $A = 1A_1\hdots A_N$ with $A_m \in \bar{\mathcal A}_{n_m}$ and $\sum n_m = n$ then $A \in \bar{\mathcal A}_{n+1}$. Indeed, for $k = 1$, we have
\[
1=k < b_1 N+1 = N+1.
\]
and for $k\in [k_{m-1}+1,k_{m}]$ with $k_0 = 1$, $k_m = \sum_{l\leq m}n_l N +m +1$, we have 
\[
b_k = \sum_{l< m} n_l + 1 + b_{m,k-k_{m-1}}.
\]
We deduce 
\[
b_k N +1 = \sum_{l<m} n_l N  + N b_{m, k-k_{m-1}}  +N+1 = k_{m-1} - (m-1) + N b_{m, k-k_{m-1}} +N.
\]
If $m<N$, then we use that $N b_{m, k-k_{m-1}}\geq k-k_{m-1} -1$, and get
\[
b_k N +1 \geq N-m + k > k.
\]
If $m=N$ then we have 
\[
b_k N +1 = k_{N-1}  + N b_{N, k-k_{N-1}} +1.
\]
If $k<(n+1)N+1$ then $k-k_{N-1} < (n+1)N+1 - N -\sum_{l< N} n_l N = n_N N+1$, and thus
\[
b_k N+1 > k_{N-1} + k - k_{N-1} = k.
\]
If $k = (n+1)N+1$ then $k-k_{N-1} = n_N N +1$ and thus
\[
b_k N+1 = k.
\]
which concludes the proof.
\end{proof}

We deduce that $\bar{\mathcal A_n}$ is isomorphic to $\mathcal A_n$ and in particular they have the same cardinal.

\bibliographystyle{plain}
\bibliography{WTbib}

\begin{thebibliography}{10}

\bibitem{ACG21}
Ioakeim Ampatzoglou, Charles Collot, and Pierre Germain.
\newblock Derivation of the kinetic wave equation for quadratic dispersive
  problems in the inhomogeneous setting.
\newblock {\em arXiv preprint arXiv:2107.11819}, 2021.

\bibitem{B-GCT11}
Sylvie Benzoni-Gavage, Jean-Fran\c{c}ois Coulombel, and Nikolay Tzvetkov.
\newblock Ill-posedness of nonlocal {B}urgers equations.
\newblock {\em Adv. Math.}, 227(6):2220--2240, 2011.

\bibitem{Brout-Prigo}
R.~Brout and I.~Prigogine.
\newblock Statistical mechanics of irreversible processes part viii: general
  theory of weakly coupled systems.
\newblock {\em Physica}, 22(6):621--636, 1956.

\bibitem{BGHS}
T.~Buckmaster, P.~Germain, Z.~Hani, and J.~Shatah.
\newblock Onset of the wave turbulence description of the longtime behavior of
  the nonlinear {S}chr\"{o}dinger equation.
\newblock {\em Invent. Math.}, 225(3):787--855, 2021.

\bibitem{Caf90}
Russel~E. Caflisch.
\newblock A simplified version of the abstract {C}auchy-{K}owalewski theorem
  with weak singularities.
\newblock {\em Bull. Amer. Math. Soc. (N.S.)}, 23(2):495--500, 1990.

\bibitem{CoG19}
Charles Collot and Pierre Germain.
\newblock On the derivation of the homogeneous kinetic wave equation.
\newblock {\em arXiv preprint arXiv:1912.10368}, 2019.

\bibitem{CoGe20}
Charles Collot and Pierre Germain.
\newblock Derivation of the homogeneous kinetic wave equation: longer time
  scales.
\newblock {\em arXiv preprint arXiv:2007.03508}, 2020.

\bibitem{DH21}
Yu~Deng and Zaher Hani.
\newblock Full derivation of the wave kinetic equation, 2021.

\bibitem{denghani19}
Yu~Deng and Zaher Hani.
\newblock On the derivation of the wave kinetic equation for {NLS}.
\newblock {\em Forum Math. Pi}, 9:Paper No. e6, 37, 2021.

\bibitem{denghani21}
Yu~Deng and Zaher Hani.
\newblock Propagation of chaos and the higher order statistics in the wave
  kinetic theory, 2021.

\bibitem{DyKuk2}
Andrey Dymov and Sergei Kuksin.
\newblock Formal expansions in stochastic model for wave turbulence 2: method
  of diagram decomposition.
\newblock {\em arXiv preprint arXiv:1907.02279}, 2019.

\bibitem{DyKuk1}
Andrey Dymov and Sergei Kuksin.
\newblock On the {Z}akharov-{L}'vov stochastic model for wave turbulence.
\newblock {\em Dokl. Math}, 101:102--109, 2020.

\bibitem{DyKuk3}
Andrey Dymov and Sergei Kuksin.
\newblock Formal expansions in stochastic model for wave turbulence 1:
  {K}inetic limit.
\newblock {\em Comm. Math. Phys.}, 382(2):951--1014, 2021.

\bibitem{fernique}
X.~Fernique.
\newblock Regularite des trajectoires des fonctions aleatoires gaussiennes.
\newblock In P.~L. Hennequin, editor, {\em Ecole d'Et{\'e} de Probabilit{\'e}s
  de Saint-Flour IV---1974}, pages 1--96, Berlin, Heidelberg, 1975. Springer
  Berlin Heidelberg.

\bibitem{Hass1}
K.~Hasselmann.
\newblock On the non-linear energy transfer in a gravity-wave spectrum part 1.
  general theory.
\newblock {\em Journal of Fluid Mechanics}, 12(4):481–500, 1962.

\bibitem{Hass2}
K.~Hasselmann.
\newblock On the non-linear energy transfer in a gravity wave spectrum part 2.
  conservation theorems; wave-particle analogy; irrevesibility.
\newblock {\em Journal of Fluid Mechanics}, 15(2):273–281, 1963.

\bibitem{LukSpo}
Jani Lukkarinen and Herbert Spohn.
\newblock Weakly nonlinear {S}chr\"odinger equation with random initial data.
\newblock {\em Invent. Math.}, 183(1):79--188, 2011.

\bibitem{Naz}
Sergey Nazarenko.
\newblock {\em Wave turbulence}, volume 825 of {\em Lecture Notes in Physics}.
\newblock Springer, Heidelberg, 2011.

\bibitem{peierls}
R.~Peierls.
\newblock Zur kinetischen theorie der wärmeleitung in kristallen.
\newblock {\em Annalen der Physik}, 395(8):1055--1101, 1929.

\bibitem{Prigogine}
I.~Prigogine.
\newblock {\em Nonequilibrium {S}tatistical {M}echanics}.
\newblock Wiley, {N}ew-{Y}ork, 1962.

\bibitem{ST21}
Gigliola Staffilani and Minh-Binh Tran.
\newblock On the wave turbulence theory for stochastic and random
  multidimensional kdv type equations, 2021.

\bibitem{KZspectra}
V.~E. {Zakharov}.
\newblock {Weak turbulence in media with a decay spectrum}.
\newblock {\em Journal of Applied Mechanics and Technical Physics},
  6(4):22--24, July 1965.

\bibitem{Zakharov1967}
V.~E. Zakharov and N.~N. Filonenko.
\newblock Weak turbulence of capillary waves.
\newblock {\em Journal of Applied Mechanics and Technical Physics},
  8(5):37--40, Sep 1967.

\bibitem{ZakFil66}
V.~E. Zakharov and N.N. Filonenko.
\newblock Energy spectrum for stochastic oscillations of the surface of a
  liquid.
\newblock {\em Dokl. Akad. Nauk SSSR}, 170:1292--1295, 1966.

\end{thebibliography}

\end{document}